\tikzset{
    base/.style = {rectangle, rounded corners, draw=black, minimum width=4cm, minimum height=1cm, text centered, font=\sffamily},
    box/.style = {rectangle, rounded corners, minimum width=5cm, minimum height=2cm, text centered, draw=gray, fill=black!15},
    header/.style={
    label={[rectangle, fill=white, draw, anchor=center, minimum width=2cm, node font=\ttfamily, name=\tikzlastnode-header]north:{#1}}}}
\tikzstyle{arrow} = [very thick, ->, >=stealth]
\begin{document}

\title{On Convex Domains maximizing the\\ gradient of the torsion function}

\begin{abstract} We consider the solution of $-\Delta u = 1$ on convex domains $\Omega \subset \mathbb{R}^2$ subject to Dirichlet boundary conditions $u =0$ on $\partial \Omega$. Our main concern is the behavior of $\|\nabla u\|_{L^{\infty}}$, also known as the maximum shear stress in Elasticity Theory and first investigated by Saint Venant in 1856. We consider the two shape optimization problems $\| \nabla u\|_{L^{\infty}}/ |\Omega|^{1/2}$ and $\| \nabla u\|_{L^{\infty}}/ \mathcal{H}^1( \partial \Omega)$. Numerically, the extremal domain for each functional looks a bit like the rounded letter `D'. We prove that (1) either the extremal domain does not have a $C^{2 + \varepsilon}$ boundary or (2) there exists an infinite set of points on $\partial \Omega$ where the curvature vanishes. Either scenario seems curious and is rarely encountered for such problems. The techniques are based on finding a representation of the functional using only conformal geometry and classic perturbation arguments.
\end{abstract}

\author{Linhang Huang}
\address{Department of Mathematics, University of Washington, Seattle, WA 98203}
\email{lhhuang@uw.edu}
\pagestyle{plain}

\maketitle

\section{Introduction and Results}

\subsection{Introduction}
We study a variational problem involving the torsion function. For any bounded, convex domain $\Omega \subset \mathbb{R}^2$, the \textit{torsion function} $u$ on $\Omega$ is the unique solution to the Dirichlet problem:
\begin{align*}
    -\Delta u = 1 \quad &\text{inside $\Omega$},\\
    u = 0 \quad &\text{on $\partial\Omega$}.
\end{align*} 
This is a classical object in Elasticity Theory and was already studied by Saint Venant \cite{saint} in 1856. From the very beginning, there has been substantial interest in trying to understand where $|\nabla u|$ is the largest; these points are the one's subjected to the most stress. Saint Venant called them `the dangerous points' (`\textit{les points dangereux}' \cite{saint}). These were also of interest to Lord Kelvin and Tait (see their 1867 \textit{Treatise on Natural Philosophy} \cite{kelvin}). Additional early work was undertaken by Boussinesq \cite{bouss}, Filon \cite{filon} and Griffith-Taylor \cite{griff}.  P\'olya \cite{pol1} showed in 1930 that the point maximizing $|\nabla u|$ is always on the boundary. The partial differential equation $-\Delta u = 1$, being particularly simple and nonetheless exhibiting very nontrivial behavior, has been studied for a very long time in a wide variety of different settings. Much is known about the partial differential equation $-\Delta u = 1$, we refer to \cite{bandle, ban3, ban4, beck, hermitemany, brascamp, dragomirk, kawohl, keady, stv1, henrot, li1, li2, jianfeng, jianfeng2, makai, makar, pay0, pay1, payshape, payque, payphi, payphi2, paywhe, phi, stv2, sperb, sperbpayne, sperb2, stv3, stv4} and references therein.

 \subsection{The Problem} We are concerned with a very simple question: how large can the gradient be, what are the best bounds for $\|\nabla u\|_{L^{\infty}(\Omega)}$?  It is relatively easy to see that the gradient need not be bounded in general domains; however, the expression is always finite in convex domains. There is a scaling symmetry, if we rescale the domain $\Omega \rightarrow \lambda \Omega$ then the gradient rescales by a factor of $\sqrt{\lambda}$. Accounting for this symmetry leads to two natural shape optimization problems
$$ \sup_{\text{\tiny $\Omega$~convex, bounded}} \frac{\Vert\nabla u\Vert_{L^\infty(\Omega)}}{|\Omega|^{1/2}} \qquad \mbox{and} \qquad  \sup_{\text{\tiny $\Omega$~convex, bounded}}  \frac{\Vert\nabla u\Vert_{L^\infty(\Omega)}}{\mathcal{H}^1(\partial \Omega)}.
$$
The first problem has been studied for a very long time. A fundamental insight by Sperb \cite{sperb} is that the expression $|\nabla u|^2 + 2u$ attains its maximum at the boundary. Since $u$ vanishes on the boundary, we deduce the inequality
$$ \| \nabla u\|_{L^{\infty}(\Omega)}^2 =\left\Vert\frac{\partial u}{\partial\mathbf{n}} \right\Vert^2_{L^\infty(\partial\Omega)} \leq 2 \|u\|_{L^{\infty}(\Omega)}.$$
It is a celebrated result of P\'olya \cite{pol2} that $\|u\|_{L^{\infty}(\Omega)}$ is maximized when the domain is a ball and a direct computations gives $\|u\|_{L^{\infty}(\Omega)} \leq |\Omega|/(4\pi)$. Altogether,  
$$ \frac{\Vert\nabla u\Vert_{L^\infty(\Omega)}}{|\Omega|^{1/2}} \leq \frac{1}{\sqrt{2\pi}} \approx 0.398\dots$$
It is known that this upper bound is not tight. A simple computation using ellipses (for which $u$ is merely a quadratic polynomial) gives an example for which the functional is at least $0.321$. Two independent numerical approaches \cite{hoskins2021towards} both suggest the optimal value to be roughly $0.358$. The second functional, which was suggested by Krzysztof Burdzy, can be bounded in terms of the first: using the isoperimetric inequality and the previous bound, one has
$$ \frac{\Vert\nabla u\Vert_{L^\infty(\Omega)}}{\mathcal{H}^1(\partial \Omega)} \leq \frac{\Vert\nabla u\Vert_{L^\infty(\Omega)}}{|\Omega|^{1/2}} \frac{|\Omega|^{1/2}}{\mathcal{H}^1(\partial \Omega)} \leq \frac{1}{\sqrt{2\pi}} \frac{1}{2 \sqrt{\pi}} = \frac{1}{\sqrt{8} \pi}.$$

Explicit examples, see Figure~\ref{fig:optimal}, shows that both inequalities are nearly tight, the first being as little as $10\%$ away from the optimal value, the second at most $13\%$.

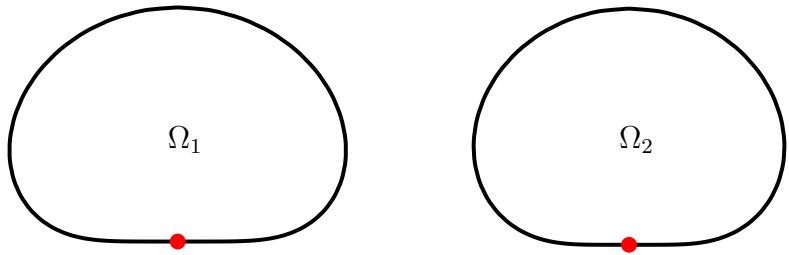
\begin{figure}[ht]
    \centering
    \begin{tikzpicture}\draw[domain=0:360,smooth,samples=50,variable=\t, line width=1.5]
  plot(
  {-3+1.5*( 
  -0.102255193272555*sin(\t)^3 + 0.159549913816006*sin(\t)*cos(\t)^2 + 0.738939378235281*sin(\t)*cos(\t) + 1.38661223308667*sin(\t))}, {0.1+1.5*(-0.221233862285197*sin(\t)^2*cos(\t) - 0.512311190685248*sin(\t)^2 + 0.0383548502864039*cos(\t)^3 + 0.266454947029555*cos(\t)^2 + cos(\t))});
\draw[domain=0:360,smooth,samples=50,variable=\t, line width=1.5]
  plot({3+1.5*(
-0.0877652088366168*sin(\t)^3 + 0.17464899865143*sin(\t)*cos(\t)^2 + 0.769299201814156*sin(\t)*cos(\t) + 1.22783150469083*sin(\t))}, {1.5*(-0.214439542806933*sin(\t)^2*cos(\t) - 0.472284898260466*sin(\t)^2 + 0.0474139429240895*cos(\t)^3 + 0.313275558932602*cos(\t)^2 + cos(\t))}
  );

    \fill [red] ({-3+1.5*( 
  -0.102255193272555*sin(180)^3 + 0.159549913816006*sin(180)*cos(180)^2 + 0.738939378235281*sin(180)*cos(180) + 1.38661223308667*sin(180))}, {0.1+1.5*(-0.221233862285197*sin(180)^2*cos(180) - 0.512311190685248*sin(180)^2 + 0.0383548502864039*cos(180)^3 + 0.266454947029555*cos(180)^2 + cos(180))}
  ) circle (3pt);
  
    \fill [red] ( {3+1.5*(
-0.0877652088366168*sin(180)^3 + 0.17464899865143*sin(180)*cos(180)^2 + 0.769299201814156*sin(180)*cos(180) + 1.22783150469083*sin(180))}, {1.5*(-0.214439542806933*sin(180)^2*cos(180) - 0.472284898260466*sin(180)^2 + 0.0474139429240895*cos(180)^3 + 0.313275558932602*cos(180)^2 + cos(180))}
 ) circle (3pt);

    \node at (-2.9, 0.3) {\Large $\Omega_1$};
    \node at (3.1, 0.3) {\Large $\Omega_2$};
    \end{tikzpicture}
    \caption{Left: an example of a convex domain for which $\|\nabla u\|_{L^{\infty}}/|\Omega|^{1/2} \sim 0.8926/\sqrt{2\pi}$.  Right: am example of a domain where one has   $\|\nabla u\|_{L^{\infty}}/\mathcal{H}^1(\Omega) \sim 0.8723/(\sqrt{8}\pi)$. Both are from a family of domains described by Sweers (see \cite{hoskins2021towards} and Section 6).}
    \label{fig:optimal}
\end{figure}

We are motivated by the extremal domains for these two functionals. The shape shown in Figure~\ref{fig:optimal} for the first functional was deduced in two different ways, once via explicit numerics and once via conformal geometry \cite{hoskins2021towards}, and the results appear to be consistent and very close to what is shown in Figure 1. Since we are maximizing an $L^{\infty}-$norm, the asymmetry is perhaps not too surprising. The extremal domains appear to be smooth (though hypothetical non-smooth extremizers would be difficult to find numerically) and they appear to be relatively flat close to the point where $|\nabla u|$ attains its maximum. Virtually nothing seems to be known about these domains. Standard results could be used to show that the inradius cannot be too small or that the curvature cannot be uniformly very large, however, the existing techniques do not seem to easily allow for the study of these domains.

\subsection{Statement of Results}
We prove two structure statements about the possible shape of these extremizers. The first result states that the maximizer of $\|\nabla u\|_{L^{\infty}}/|\Omega|^{1/2}$ cannot have `sharp corners' (meaning corners with an angle of more than 90 degrees). 

\begin{theorem}[No Sharp Corners]\label{theo-1}
If $\Omega$ is a convex maximizer of $\|\nabla u\|_{L^{\infty}}/|\Omega|^{1/2}$ and if the boundary is $\partial \Omega$ is $C^{1+\alpha}$ around any maximizing point for some $\alpha>0$, then $\partial \Omega$ cannot admit two adjacent $C^{1+\beta}$-smooth subarcs that meet at an angle sharper than $\pi/2$ for any $\beta\in(0,1)$.
\end{theorem}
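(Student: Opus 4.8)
The plan is a perturbative argument by contradiction: assuming $\Omega$ is a maximizer whose boundary has two $C^{1+\beta}$ subarcs meeting at a point $p_0$ with interior angle $\gamma<\pi/2$, I construct competitors $\Omega_\epsilon\subset\Omega$ by slicing off the corner and show that $\|\nabla u_\epsilon\|_{L^\infty}/|\Omega_\epsilon|^{1/2}$ strictly exceeds $\|\nabla u\|_{L^\infty}/|\Omega|^{1/2}$ for small $\epsilon$. First I record that $p_0$ is never a maximizing point: since $u$ vanishes along each of the two arcs, both one-sided tangential derivatives of $u$ at $p_0$ are zero, and as the two tangent directions are linearly independent (here $\gamma\in(0,\pi)$) this forces $\nabla u(p_0)=0$; combined with P\'olya's theorem (the maximum of $|\nabla u|$ is attained on $\partial\Omega$) and the $C^{1+\alpha}$ hypothesis at the maximizing points, there is a maximizing point $p_1\in\partial\Omega$ with $|p_1-p_0|\ge d_0>0$ near which $\partial\Omega$ is smooth. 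For small $\epsilon>0$, let $\sigma_\epsilon$ be the straight chord joining the points at arclength $\epsilon$ from $p_0$ along the two arcs, and set $\Omega_\epsilon=\Omega\cap H_\epsilon$, where $H_\epsilon$ is the open half-plane bounded by the line through $\sigma_\epsilon$ and not containing $p_0$. Then $\Omega_\epsilon$ is convex (an intersection of convex sets), it agrees with $\Omega$ near $p_1$, and the region removed is, up to an $O(\epsilon^{2+\beta})$ error coming from the $C^{1+\beta}$ arcs deviating from their tangents, the triangle of area $\tfrac12\epsilon^2\sin\gamma$; thus $|\Omega_\epsilon|^{1/2}=|\Omega|^{1/2}\bigl(1-\tfrac{\sin\gamma}{4|\Omega|}\,\epsilon^2+o(\epsilon^2)\bigr)$.

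Let $u_\epsilon$ be the torsion function of $\Omega_\epsilon$, and write $M=\|\nabla u\|_{L^\infty}=|\nabla u(p_1)|$ and $M_\epsilon=\|\nabla u_\epsilon\|_{L^\infty}$. The difference $w_\epsilon:=u-u_\epsilon$ is nonnegative and harmonic in $\Omega_\epsilon$, vanishes on $\partial\Omega_\epsilon\setminus\sigma_\epsilon$, and equals $u$ on $\sigma_\epsilon$. Since $p_1\in\partial\Omega_\epsilon$ we get $M_\epsilon\ge|\nabla u_\epsilon(p_1)|\ge M-|\nabla w_\epsilon(p_1)|$, so the whole argument comes down to the estimate $|\nabla w_\epsilon(p_1)|=o(\epsilon^2)$; granting it,
$$\frac{M_\epsilon}{|\Omega_\epsilon|^{1/2}}\ \ge\ \frac{M-o(\epsilon^2)}{|\Omega|^{1/2}}\Bigl(1+\tfrac{\sin\gamma}{4|\Omega|}\epsilon^2+o(\epsilon^2)\Bigr)\ =\ \frac{M}{|\Omega|^{1/2}}\Bigl(1+\tfrac{\sin\gamma}{4|\Omega|}\epsilon^2+o(\epsilon^2)\Bigr)\ >\ \frac{M}{|\Omega|^{1/2}}$$
for all sufficiently small $\epsilon>0$, contradicting maximality.

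To obtain $|\nabla w_\epsilon(p_1)|=o(\epsilon^2)$ I combine two facts. (i) \emph{$u$ is quadratically small near $p_0$.} Because $\gamma\neq\pi/2$, the function $P(r,\theta)=r^2\bigl(\tfrac14\cos2\theta+\tfrac{1-\cos2\gamma}{4\sin2\gamma}\sin2\theta-\tfrac14\bigr)$ solves $-\Delta P=1$ and vanishes on the two rays of the tangent wedge at $p_0$, and for $\gamma<\pi/2$ one checks $P\ge0$ inside that wedge; comparing $u$ on $\Omega\cap B_\rho(p_0)$ with $P$ plus $\|u\|_{L^\infty}$ times the harmonic measure of the outer arc of the sector (which is $O(r^{\pi/\gamma})$, hence $o(r^2)$ since $\gamma<\pi/2$) yields $0\le u(x)\le C|x-p_0|^2$ near $p_0$; as $\sigma_\epsilon\subset B_{C\epsilon}(p_0)$, this gives $\max_{\sigma_\epsilon}u=O(\epsilon^2)$. (ii) \emph{Concentration gain.} Since $w_\epsilon$ is harmonic with boundary data supported on the set $\sigma_\epsilon$ of diameter $O(\epsilon)$, the maximum principle gives $0\le w_\epsilon\le(\max_{\sigma_\epsilon}u)\,\omega_\epsilon$, where $\omega_\epsilon$ denotes harmonic measure of $\sigma_\epsilon$ in $\Omega_\epsilon$, and $\omega_\epsilon(p_1)=O(\epsilon)$ because the Green's function of the convex domain $\Omega_\epsilon$ and its normal derivative are bounded on $\sigma_\epsilon$ as seen from distance $\ge d_0$. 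Differentiating at $p_1$, where $w_\epsilon$ and $\omega_\epsilon$ vanish on the $C^{1+\alpha}$ boundary so the boundary gradient estimate for harmonic functions applies, gives $|\nabla w_\epsilon(p_1)|\le(\max_{\sigma_\epsilon}u)\,|\nabla\omega_\epsilon(p_1)|=O(\epsilon^2)\cdot O(\epsilon)=O(\epsilon^3)$.

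The step I expect to be the main obstacle is precisely (ii): the naive bound $|\nabla w_\epsilon(p_1)|\lesssim\|w_\epsilon\|_{L^\infty}=O(\epsilon^2)$ is on its own too weak, since its implicit constant need not be smaller than $\tfrac{\sin\gamma}{4|\Omega|}M$, so one genuinely needs the extra factor $\epsilon$ coming from the chord $\sigma_\epsilon$ being short, and one must verify that $\omega_\epsilon(p_1)=O(\epsilon)$ and the attached boundary gradient estimate hold with constants stable as $\epsilon\to0$ and as the new (convex) corners of $\Omega_\epsilon$ at the endpoints of $\sigma_\epsilon$ appear; convexity of each $\Omega_\epsilon$ and the $C^{1+\alpha}$ regularity of $\partial\Omega$ near $p_1$ are what make this uniform. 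Two minor technical points remain: justifying $\nabla u(p_0)=0$ and the wedge comparison at a merely $C^{1+\beta}$ corner — handled by enclosing $\Omega$ near $p_0$ between slightly opened comparison wedges and absorbing the $O(\mathrm{dist}^{1+\beta})$ errors — and noting that the hypothesis $\gamma<\pi/2$ enters in (i) only to secure the clean bound $u=O(\mathrm{dist}^2)$; for $\gamma\in(\pi/2,\pi)$ one instead gets $u=O(\mathrm{dist}^{\pi/\gamma})$ and $|\nabla w_\epsilon(p_1)|=O(\epsilon^{1+\pi/\gamma})=o(\epsilon^2)$, so the same scheme should extend.
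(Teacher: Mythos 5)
Your argument is essentially correct, but it is a genuinely different route from the paper. The paper never touches the domain directly: it rewrites $\mathbb{L}_1$ through the Riemann map $f:\D\to\Omega$ (Proposition~\ref{verysmooth}, Theorem~\ref{equiv}), shows via the shrinking-disk family $D_a=B_a(1-a)$ and a one-sided derivative inequality at the maximizer that $f'\in L^2_{\mathrm{loc}}(\partial\D\setminus\{1\})$ (Proposition~\ref{ineq}), and then invokes Pommerenke's corner asymptotics $|f'(z)|\asymp|z-\zeta|^{\alpha-1}$ (Lemma~\ref{corners}) to exclude angles below $\pi/2$; the $\pi/2$ threshold is exactly the $L^2$ threshold, and the paper itself flags it as an artifact of that method. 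Your proof is a direct domain surgery: cut the corner by a chord at scale $\epsilon$, gain $\sim\tfrac12\epsilon^2\sin\gamma$ in area, and show the gradient at a fixed extremal point $p_1$ drops only by $o(\epsilon^2)$, using the wedge barrier $u=O(\mathrm{dist}^2)$ (valid since $\gamma<\pi/2$), harmonic-measure smallness of the chord seen from near $p_1$, and a boundary gradient estimate on the fixed $C^{1+\alpha}$ piece of $\partial\Omega$ at $p_1$. What your approach buys is that it uses only elementary PDE/potential-theoretic tools, stays entirely on the domain side, and--as you note at the end--the same scheme with $u=O(\mathrm{dist}^{\pi/\gamma})$ would exclude \emph{every} corner of angle $<\pi$ (away from the extremal point), which is strictly stronger than the theorem and removes the $\pi/2$ artifact; what the paper's approach buys is the global conclusion $f'\in L^2_{\mathrm{loc}}(\partial\D\setminus\{1\})$, which fits the conformal framework used for the main theorem.

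Three points to tighten. First, as written ``$\omega_\epsilon(p_1)=O(\epsilon)$'' is vacuous since $p_1\in\partial\Omega_\epsilon\setminus\sigma_\epsilon$ gives $\omega_\epsilon(p_1)=0$; what you need (and what is true) is $\omega_\epsilon\le C\epsilon/d_0$ on a \emph{fixed} neighborhood $B_{d_0/4}(p_1)\cap\Omega_\epsilon$, which follows cleanly from monotonicity of harmonic measure, $\omega(z,\sigma_\epsilon,\Omega_\epsilon)\le\omega(z,\sigma_\epsilon,H_\epsilon)\lesssim|\sigma_\epsilon|/\mathrm{dist}(z,\sigma_\epsilon)$; then the boundary gradient estimate for the harmonic function $w_\epsilon$, which vanishes on the fixed $C^{1+\alpha}$ arc near $p_1$ and is bounded by $C\epsilon^3$ on that fixed neighborhood, gives $|\nabla w_\epsilon(p_1)|\le C\epsilon^3$ with a constant independent of $\epsilon$. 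Second, the claim $\nabla u(p_0)=0$ via one-sided tangential derivatives presumes differentiability of $u$ at the corner, which you have not established; it is also unnecessary, since the hypothesis that $\partial\Omega$ is $C^{1+\alpha}$ around any maximizing point already forces $p_1\neq p_0$, so you may simply set $d_0=|p_1-p_0|>0$. Third, you should say explicitly that $u$ and $u_\epsilon$ are $C^1$ up to the boundary near $p_1$ (local Schauder/Kellogg on the fixed $C^{1+\alpha}$ arc), so that $|\nabla u(p_1)|=M$, $\nabla u_\epsilon(p_1)=\nabla u(p_1)-\nabla w_\epsilon(p_1)$, and $\|\nabla u_\epsilon\|_{L^\infty(\Omega_\epsilon)}\ge|\nabla u_\epsilon(p_1)|$ all make sense; with these repairs the contradiction with maximality goes through as you computed.
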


This rules out extremal examples like the square, however, one would naturally be inclined to believe that this limitation is an artifact of the method; in fact, the result follows from showing that the Riemann map $f:\mathbb{D} \rightarrow \Omega$ has to satisfy $f' \in L^2(\partial\mathbb{D})$ which rules out corners sharper than $90$ degrees. 
The second statement is our main result. It shows that the extremizer of either of these two problems is guaranteed to be fairly unusual (and thus fairly interesting).
\begin{theorem}[Main Result]\label{theo}
    Let $\Omega$ be a convex maximizer of either of the two functionals. Then
    \begin{enumerate}
        \item either the boundary is not $C^{2+\varepsilon}$ for some $\varepsilon >0$ or
        \item or the points of zero curvature on $\partial \Omega$ accumulate around the point in which the gradient is maximal.
    \end{enumerate}
\end{theorem}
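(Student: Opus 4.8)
The plan is to pass to conformal coordinates. Let $f:\mathbb{D}\to\Omega$ be the Riemann map, normalized so that the maximizing boundary point corresponds to $z=1$. The first step is to rewrite the functional purely in terms of $f$: writing $u$ in terms of the solution of the Poisson problem pulled back to the disk, one obtains $u\circ f = v$ where $-\Delta v = |f'|^2$ on $\mathbb{D}$ with $v=0$ on $\partial\mathbb{D}$, so $v(z)=\frac{1}{2\pi}\int_{\mathbb{D}}G(z,w)|f'(w)|^2\,dA(w)$ with $G$ the Green's function of the disk. On the boundary, $|\nabla u|(f(e^{i\theta})) = |\partial_n v(e^{i\theta})|/|f'(e^{i\theta})|$, and $|\Omega| = \int_{\mathbb{D}}|f'|^2$, $\mathcal{H}^1(\partial\Omega)=\int_0^{2\pi}|f'(e^{i\theta})|\,d\theta$. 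So both functionals become explicit (if complicated) functionals of $f$ alone, and one should use Theorem~\ref{theo-1} (or its underlying estimate $f'\in L^2(\partial\mathbb{D})$) to guarantee enough regularity that these expressions make sense and that the maximizing point is isolated and nondegenerate.

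The second step is a first-variation (perturbation) computation. Perturb the domain by an inward/outward normal push supported near an arbitrary boundary point $f(e^{i\theta_0})$ away from the maximizing point $z=1$; equivalently perturb $f$ by $f_t = f + t\,f\cdot(\text{a Herglotz-type function concentrated at }e^{i\theta_0})$ so that convexity and the conformal structure are preserved to first order. Compute $\frac{d}{dt}\big|_{t=0}$ of $\|\nabla u\|_{L^\infty}$ (which, since the max is attained at a single nondegenerate point $z=1$, is differentiable and equals the derivative of the value at that fixed competing point) and of $|\Omega|^{1/2}$ (resp. $\mathcal{H}^1(\partial\Omega)$). Stationarity forces an Euler–Lagrange identity that must hold at \emph{every} boundary point other than the maximizer: schematically, a weighted combination of the Poisson/Green kernel evaluated against the boundary curvature data of $\Omega$ equals a constant (coming from the Lagrange multiplier for the area/perimeter constraint). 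The curvature of $\partial\Omega$ enters because the normal speed of the boundary under the perturbation, re-expressed conformally, couples to $|f'|$ and $\arg f'$, whose boundary behavior is exactly the curvature $\kappa = (1 + \mathrm{Re}(e^{i\theta} f''/f'))/|f'|$.

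The third step is to extract the dichotomy. The Euler–Lagrange relation is an identity between a real-analytic-type quantity (a Poisson integral / harmonic extension, hence real-analytic in the interior and, where the boundary is smooth enough, up to the boundary) and the curvature $\kappa(\theta)$ multiplied by an explicit positive weight. If the boundary were $C^{2+\varepsilon}$ everywhere, $\kappa$ would be a genuine $C^{\varepsilon}$ function on all of $\partial\Omega$, and the Euler–Lagrange identity would then say that a certain harmonic function $H$ on $\mathbb{D}$, real-analytic in the interior, has boundary values that vanish on the set $\{\kappa = 0\}$ (or satisfies a sign condition tied to $\kappa$). If $\{\kappa=0\}$ does \emph{not} accumulate at $z=1$, then away from $z=1$ it is a closed set bounded away from $1$; combining the identity with the maximum principle / unique continuation for $H$ (its nontangential boundary trace cannot vanish on a set of positive measure unless $H\equiv 0$, and $H\equiv 0$ contradicts the strict positivity forced near the maximizing point) yields a contradiction — so either $\partial\Omega$ fails to be $C^{2+\varepsilon}$, or $\{\kappa=0\}$ accumulates at the maximizer. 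The same argument runs for both functionals since they differ only in the Lagrange-multiplier term (area vs. perimeter), which does not affect the unique-continuation step.

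The main obstacle I expect is making the first variation of $\|\nabla u\|_{L^\infty}$ rigorous and clean: one must know the maximizing point is unique and the Hessian of $|\nabla u|$ along the boundary is nondegenerate there (so that the $L^\infty$ norm is shape-differentiable with the derivative localized at that point), and one must control the boundary regularity of $v$ and $f'$ well enough — precisely the role of the $f'\in L^2(\partial\mathbb{D})$ estimate behind Theorem~\ref{theo-1} — for the kernel manipulations (differentiating $G(z,w)|f'(w)|^2$ under the integral, taking normal derivatives, integrating by parts against the perturbation field) to be justified. A secondary difficulty is choosing the perturbation family so that it stays within the class of convex domains to second order near flat pieces (where $\kappa=0$, one-sided convexity constraints bite), which is why the conclusion is naturally phrased as the stated dichotomy rather than a contradiction outright.
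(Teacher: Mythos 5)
Your overall frame---pass to the Riemann map, compute a first variation, and read curvature information out of the stationarity condition---is the same as the paper's, but the two steps that actually carry the paper's proof are absent, and the step you substitute for them does not work. First, you base the variation on differentiating $\|\nabla u\|_{L^\infty}$, which requires the maximizing boundary point to be unique and the maximum nondegenerate; nothing in the problem provides this, and you yourself flag it as the main obstacle without resolving it. The paper never differentiates the $L^\infty$ norm: it fixes the boundary point $1\in\partial\mathbb{D}$, works with the conformal functional $L_i(f)$ whose numerator is the normal-derivative value at $f(1)$ (a lower bound for the supremum norm), proves $\sup_{\mathbf{F}_0}L_i=\sup_\Omega\mathbb{L}_i$ (Theorem~\ref{equiv}, via the F.\ and M.\ Riesz theorem), and then only needs the one-sided inequality $L_i(f+tg)\le L_i(f)$ at a maximizer; uniqueness and nondegeneracy of the extremal point never enter.

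Second, and more seriously, your contradiction mechanism in step 3 is not viable. If the zero-curvature set does not accumulate at the maximizer, it may perfectly well be empty or of measure zero, and then ``the boundary trace of a harmonic function vanishes on $\{\kappa=0\}$'' carries no information: unique continuation from a null set (or from nothing) produces no contradiction, and nothing in your argument explains why zero-curvature points must exist at all, let alone accumulate at the maximizing point. The paper's actual mechanism is different and is the missing idea: one perturbs $f$ by $g=\int f'h_b$ for an explicit family of conformal maps $h_b$ with $\operatorname{Re}h_b(1)=-1$ fixed and $\operatorname{Re}h_b\to 0$ boundedly on $\overline{\mathbb{D}}\setminus\{1\}$, so that as $b\to 1$ the first variation of $\log L_i$ tends to $+1>0$, contradicting maximality---\emph{provided} the perturbation is admissible. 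Admissibility is precisely where convexity enters: the perturbation decreases the curvature signature $k_f(\theta)=\liminf_{z\to e^{i\theta}}\operatorname{Re}\bigl(1+zf''(z)/f'(z)\bigr)$ only on a small interval $I_-(b)$ that shrinks to the maximizing angle, so it is admissible as soon as $k_f>0$ on a one-sided neighborhood of $0$. Hence $k_f$ (and, under the $C^{2+\varepsilon}$ hypothesis together with the local Kellogg theorem, the Euclidean curvature) must vanish along sequences approaching the maximizer from both sides. This concentration of admissible test perturbations at the maximizing point is what forces the accumulation stated in the theorem; your remark that ``one-sided convexity constraints bite'' gestures at the difficulty but does not supply a construction that overcomes it.
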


Both cases would be \textit{very} interesting.  Given the nature of the problem, one is naturally inclined to believe that the extremal domain is going to be smooth. There are natural questions of the precise type of regularity (real-analytic, Gevrey, $C^{\infty}$, ...) but there is nothing in the problem statement that would suggest the possibility of $\partial \Omega$ being not, say, $C^3$. The second alternative would mean that $\partial \Omega$ is `flat' around the point in which $|\nabla u|$ is maximal. This is natural and to be expected, however, the natural expectation is surely that the curvature $\kappa$ vanishes, say, quadratically, or to some higher order. Having a sequence of points with zero curvature in every neighborhood is very atypical.

\subsection{The techniques}
Our techniques rely entirely on classic arguments from complex analysis. This is partially inspired by the hope that one might one day be able to characterize the extremal domains. Given their unusual shape, the only hope of possibly achieving this might be by describing it in terms of the Riemann map. Instead of optimizing over convex domains $\Omega$, we optimize over conformal maps $f:\mathbb{D} \rightarrow \Omega$. For example, if $f(\mathbb{D}) = \Omega$ has a $C^{1+\alpha}$-boundary and if $|\nabla u|$ is maximal in $f(1) \in \partial \Omega$, then \begin{equation*}
    \frac{\Vert\nabla u\Vert_{L^\infty(\Omega)}}{|\Omega|^{1/2}} = \frac{\int_\D |f'(w)|^2 P(w)|dw|^2}{2\pi|f'(1)| \cdot \left( \int_\D |f'(w)|^2|dw|^2\right)^{1/2}}, \qquad (\diamond).
\end{equation*}
where $P:\mathbb{D} \rightarrow \mathbb{C}$ is an explicit function. We prove a precise equivalence between the PDE problem and its analogue in conformal geometry.
One persistent obstacle is that convexity of $\Omega$ is non-trivially encoded in the behavior of the conformal map $f:\mathbb{D} \rightarrow \Omega$ (Study's Theorem etc). The main idea is then to use an Euler-Lagrange approach within a suitable function space and with a very precisely constructed deformation. It would be nice if the extremal domain for either of the two problems had a nice representation in terms of the conformal map or, equivalently, if the functional $(\diamond)$ had a nice extremizers; once more, the global restriction of $f(\mathbb{D})$ being convex poses a significant challenge.

\section{Preliminaries}\label{prelim}
\subsection{Conformal Geometry on Convex Domains}
In this paper, we will follow \cite{beliaev2020conformal, duren2001univalent, garnett2005harmonic} and use the term `conformal map' to refer to analytic univalent maps. We will use the complex variable $w$ for area integrals and $\zeta$ for line integrals. That is, we will write $|dw|^2$ for the Lebesgue measure on $\R^2=\C$ and $|d\zeta|$ for the Hausdorff measure $\mathcal{H}^1(\Gamma)$ on a rectifiable curve (or loop) $\Gamma\subseteq \C$.

Given a Jordan domain $\Omega\subseteq \C$, as it is simply connected, there exists a conformal map $f: \D \rightarrow \Omega$. We refer to this map as a Riemann map of $\Omega$. By Carathéodory's theorem \cite{caratheodory1913begrenzung}, $f$ extends uniquely to a homeomorphism between $\overline{\D}$ and $\overline{\Omega}$. Convexity of a domain in $\mathbb{R}^2$ is a purely geometric condition: there are a number of classical results explaining how this notion is reflected in the Riemann map and we summarize three such results. 
\begin{lemma}[{Characterization of Convexity, see \cite[Duren, Section 2.5]{duren2001univalent}}]\label{harmonic}
    Let $f:\D\to \C$ be a locally univalent analytic function (that is, $f'(z)\neq 0$). Then $f$ is a conformal map onto a convex domain if and only if \begin{equation*}
        \re\left(1+z\frac{f''(z)}{f'(z)}\right) >0\quad \text{for all}\quad z\in \D.
    \end{equation*}
\end{lemma}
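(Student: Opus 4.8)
The plan is to reduce the geometric condition ``$f(\D)$ is convex'' to the infinitesimal condition in the statement by examining the $f$-images of the circles $\{|z|=r\}$, $0<r<1$. Write $\gamma_r(\theta)=f(re^{i\theta})$ for $\theta\in[0,2\pi]$ and $\Omega_r=f(\{|z|<r\})$.

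\textbf{Step 1: the tangent-angle identity.} Since $f'$ never vanishes, each $\gamma_r$ is a regular closed curve, with velocity $\gamma_r'(\theta)=ire^{i\theta}f'(re^{i\theta})$; thus, choosing a continuous branch of the argument, its tangent makes the angle $\varphi_r(\theta)=\tfrac{\pi}{2}+\theta+\arg f'(re^{i\theta})$ with the real axis. Differentiating in $\theta$, using $\tfrac{d}{d\theta}\log f'(re^{i\theta})=iz\,f''(z)/f'(z)$ with $z=re^{i\theta}$ together with $\im(iw)=\re w$, gives
\begin{equation*}
\frac{d\varphi_r}{d\theta}=\re\!\left(1+z\frac{f''(z)}{f'(z)}\right),\qquad z=re^{i\theta}.
\end{equation*}
Moreover $z\mapsto\re(1+zf''/f')$ is harmonic with value $1$ at the origin, so by the mean value property $\int_0^{2\pi}\varphi_r'(\theta)\,d\theta=2\pi$: the tangent of $\gamma_r$ always makes a net turn of exactly $2\pi$.

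\textbf{Step 2: the direction $\Leftarrow$.} Assume $\re(1+zf''/f')>0$ on $\D$. By Step~1 each $\varphi_r$ is strictly increasing with total increase $2\pi$, and a regular closed plane curve whose tangent angle is strictly monotone and turns by exactly $2\pi$ is an embedded convex curve (the winding number of the tangent being $1$ rules out self-intersections). Hence each $\gamma_r$ bounds a convex domain, and by the argument principle $f$ assumes every value inside that domain exactly once in $\{|z|<r\}$; so $f$ is univalent on $\{|z|<r\}$ with convex image $\Omega_r$. Letting $r\uparrow 1$, the nested domains $\Omega_r$ exhaust $\Omega=f(\D)$, so $f$ is univalent and $\Omega$, being an increasing union of convex sets, is convex.

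\textbf{Step 3: the direction $\Rightarrow$, and the main obstacle.} Now let $f$ be conformal with $\Omega$ convex; one must show $\re(1+zf''/f')\geq 0$ on $\D$, after which strict positivity follows from the minimum principle since this harmonic function equals $1$ at the origin. This is the classical half of the lemma \cite[Section~2.5]{duren2001univalent}: the mechanism is that convexity of $\Omega$ makes $F_{t,\zeta}(z):=f^{-1}\!\big((1-t)f(z)+tf(\zeta)\big)$ a well-defined analytic self-map of $\D$ for every $\zeta\in\D$ and $t\in[0,1]$, because the segment $[f(z),f(\zeta)]$ stays in $\Omega$, and it fixes $\zeta$; differentiating the Schwarz--Pick inequality $d_{\D}\!\big(F_{t,\zeta}(z),\zeta\big)\leq d_{\D}(z,\zeta)$ in $t$ at $t=0$ then converts convexity into a pointwise inequality in $f,f',f''$ equivalent to $\re(1+zf''/f')\geq 0$. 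No step here is deep --- the lemma only repackages classical facts, Step~1 is a routine computation, and Step~2 is immediate once the ``monotone tangent with winding number $1$ $\Rightarrow$ convex Jordan curve'' fact is invoked --- and the only place demanding real care is this last extraction of a clean pointwise bound from the Schwarz--Pick estimate for the family $F_{t,\zeta}$, for which I would simply defer to \cite{duren2001univalent}.
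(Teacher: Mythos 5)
The paper never proves this lemma at all --- it is quoted verbatim from Duren \cite[Section 2.5]{duren2001univalent} --- so there is no internal argument to compare against; what matters is whether your argument stands on its own. Your Step 1 and Step 2 are correct and complete: the identity $\varphi_r'(\theta)=\re\left(1+z f''(z)/f'(z)\right)$, the mean-value observation that the total turning is exactly $2\pi$, and the deduction (monotone tangent angle with winding number one $\Rightarrow$ convex Jordan curve, then the argument principle for univalence on $|z|<r$, then exhaustion as $r\uparrow 1$) is exactly the standard proof of the sufficiency half, and it is more than the paper supplies.

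The weak point is Step 3. Deferring the necessity half to Duren is defensible, since the paper itself only cites it, but the mechanism you sketch is not the classical argument and, as described, would not work. Differentiating $d_{\D}\bigl(F_{t,\zeta}(z),\zeta\bigr)\leq d_{\D}(z,\zeta)$ in $t$ at $t=0$ produces an inequality involving only $f$ and $f'$ (the gradient of the hyperbolic distance paired with $\bigl(f(\zeta)-f(z)\bigr)/f'(z)$); no second derivative of $f$ appears, and if you try to recover it by letting $\zeta\to z$ the strictly negative first-order term dominates, so the curvature information is lost in the limit. The classical route (and essentially what is in Duren) is different: first prove Study's theorem directly by applying Schwarz's lemma to $\omega(z)=f^{-1}\bigl(\tfrac12\bigl(f(\zeta_1 z)+f(\zeta_2 z)\bigr)\bigr)$, which is a self-map of $\D$ fixing $0$ because convexity keeps midpoints inside $f(\D)$; this shows each $f(r\D)$ is convex. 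Then your Step 1 is read backwards: convexity of the curve $\theta\mapsto f(re^{i\theta})$ forces $\varphi_r'\geq 0$, i.e.\ $\re\left(1+zf''/f'\right)\geq 0$ on every circle, and strict positivity follows from the minimum principle for this harmonic function, which equals $1$ at the origin (an equivalent packaging is the coefficient bound $|a_2|\leq 1$ for convex maps). Note also that within this paper's logical layout Study's theorem (Lemma~\ref{convex}) is deduced \emph{from} Lemma~\ref{harmonic}, so if you wanted a self-contained necessity proof you should use the Schwarz-lemma midpoint argument above rather than quote Lemma~\ref{convex}, to avoid circularity.
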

There is another characterization of convex conformal maps by concentric disks.
\begin{lemma}[{Study's Theorem,  \cite[Section 13]{emch1912study}}]\label{convex}
    Let $f:\D\to\Omega$ be a locally univalent function. Then $f$ is a conformal map onto convex domain if and only if for any $r\in (0,1)$, $f(r\D)$ is a convex domain.
\end{lemma}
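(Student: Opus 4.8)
\emph{Strategy.} The plan is to prove the two implications separately, with the infinitesimal criterion of Lemma~\ref{harmonic} serving as the bridge in each. For the ``only if'' direction, suppose $f$ is a conformal map of $\D$ onto a convex domain and fix $r\in(0,1)$. I would pass to the dilate $g(z):=f(rz)$, which is locally univalent on $\D$ since $g'(z)=rf'(rz)\neq 0$, and compute
\[
  1+z\frac{g''(z)}{g'(z)} \;=\; 1+(rz)\frac{f''(rz)}{f'(rz)}.
\]
As $rz$ ranges over $r\D\subset\D$, the ``only if'' half of Lemma~\ref{harmonic} applied to $f$ shows this has positive real part for every $z\in\D$, and the ``if'' half applied to $g$ then gives that $g(\D)=f(r\D)$ is a convex domain.

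\emph{The converse: reduction.} Now assume $f(r\D)$ is a convex domain for every $r\in(0,1)$. Convexity of $\Omega=f(\D)$ is immediate, since $r<r'$ forces $f(r\D)\subseteq f(r'\D)$ and hence $\Omega=\bigcup_{0<r<1}f(r\D)$ is an increasing union of convex open sets. The substance is univalence, and I would obtain it by showing that the analytic function $\phi(z):=1+zf''(z)/f'(z)$ (well defined on $\D$ because $f'$ is zero-free) has $\re\phi>0$ throughout $\D$: the ``if'' half of Lemma~\ref{harmonic} then produces the conformal map onto a convex domain outright, univalence included. Equivalently, one must show $\re\phi$ has no zero in $\D$. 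Suppose instead it has one, and pick one, $z_0$, of minimal modulus $\rho:=|z_0|$; note $\rho>0$ since $\re\phi(0)=1$, and $\rho<1$ since $z_0\in\D$. On $\rho\D$ we then have $\re\phi>0$, so $g(z):=f(\rho z)$ meets the hypothesis of Lemma~\ref{harmonic} and $f$ is a conformal map of $\rho\D$ onto the convex domain $f(\rho\D)$. If I can show $f$ stays injective on $(\rho+\varepsilon)\D$ for some small $\varepsilon>0$, I reach a contradiction: applying the ``only if'' half of Lemma~\ref{harmonic} to the conformal map $z\mapsto f((\rho+\varepsilon)z)$ onto the convex domain $f((\rho+\varepsilon)\D)$ forces $\re\phi>0$ on $(\rho+\varepsilon)\D$, whereas $z_0\in(\rho+\varepsilon)\D$.

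\emph{The main obstacle.} Propagating injectivity just past $\rho$ is the delicate part of the plan. First I would upgrade injectivity of $f$ on the open disk $\rho\D$ to injectivity on $\overline{\rho\D}$. Since $\overline{f(\rho\D)}=f(\overline{\rho\D})$, every boundary point of $f(\rho\D)$ already lies on $f(\rho\,\partial\D)$; the real claim is the reverse inclusion $f(\rho\,\partial\D)\subseteq\partial f(\rho\D)$, i.e.\ that no point of the circle $\rho\,\partial\D$ maps into the open set $f(\rho\D)$. This I would prove by a short two-sheets argument: if $f(w_0)=f(z_0)$ with $|w_0|=\rho$ and $z_0\in\rho\D$, then $f$ being a local homeomorphism near both $w_0$ and $z_0$ (as $f'\neq0$) produces a point of $f(\rho\D)$ with two distinct preimages in $\rho\D$, contradicting injectivity there. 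Hence $f(\rho\,\partial\D)=\partial f(\rho\D)$, which is a Jordan curve because it bounds a bounded convex open set; Carathéodory's theorem applied to the conformal map $\D\to f(\rho\D)$, $z\mapsto f(\rho z)$, then shows it extends to a homeomorphism of the closed disks, so $f$ is injective on $\overline{\rho\D}$. Finally, injectivity is open on compact sets for locally univalent maps, so $f$ remains injective on a neighborhood of $\overline{\rho\D}$, supplying the $\varepsilon$ needed above. I expect the genuine work to lie in making the two-sheets step airtight --- precisely in ruling out that the image circle folds back inside $f(\rho\D)$ --- with everything else being bookkeeping around Lemma~\ref{harmonic}, Carathéodory's theorem, and the increasing union.
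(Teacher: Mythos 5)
Your proposal is correct, and for the direction the paper actually proves it is the same argument: for $g(z)=f(rz)$ you use the identity $1+zg''(z)/g'(z)=1+(rz)f''(rz)/f'(rz)$ together with both halves of Lemma~\ref{harmonic}, exactly as in the paper. The difference is that the paper's written proof stops there: it only verifies that a convex conformal map has convex dilates $f(r\D)$, leaving the converse implication to the citation of Study, whereas you supply a complete argument for it. Your route --- take a zero $z_0$ of $\re\bigl(1+zf''(z)/f'(z)\bigr)$ of minimal modulus $\rho$ (it exists because the zero set is relatively closed in $\D$, the value at $0$ is $1$, and $\rho<1$), observe that Lemma~\ref{harmonic} then makes $f$ univalent on $\rho\D$ with convex image, upgrade injectivity to $\overline{\rho\D}$ via the two-sheets argument plus Carath\'eodory (legitimate: $f(\overline{\rho\D})$ is compact, so $f(\rho\D)$ is a bounded convex domain, its boundary is a Jordan curve, and uniqueness of continuous extensions identifies the Carath\'eodory extension with $f(\rho\,\cdot)$ on $\overline{\D}$), then to $(\rho+\varepsilon)\D$ by the standard compactness argument for locally injective maps, and finally contradict minimality by applying the ``only if'' half of Lemma~\ref{harmonic} at radius $\rho+\varepsilon$ --- is sound as sketched. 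What your extra work buys is a self-contained proof of the full equivalence, in particular that convexity of all the sets $f(r\D)$ forces global univalence of a merely locally univalent $f$, a point the paper never addresses; what the paper's brevity buys is that it records only the implication it actually uses later (Corollary~\ref{circles} and Lemma~\ref{approx} need only the ``convex image implies convex dilates'' direction). One cosmetic quibble: you reuse the symbol $z_0$ both for the minimal-modulus zero and for the interior point in the two-sheets step; rename one of them.
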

\begin{proof}
Lemma \ref{harmonic} allows for a very short proof: let $f_r(z):=f(rz)$. If $f$ is a conformal map of a convex domain, then we have $f'_r(z) = rf'(rz)\neq 0$ and \begin{equation*}
        \re\left(1+z\frac{f''_r(z)}{f'_r(z)}\right) = \re\left(1+z\frac{r^2f''(rz)}{rf'(rz)}\right)= \re\left(1+rz\frac{f''(rz)}{f'(rz)}\right)>0.
    \end{equation*} 
\end{proof}
The third characterization of convex conformal maps is stated as follows. 
\begin{lemma}[{Radial Increase,  \cite[Proposition 2.13]{pascu2002scaling}}]\label{increase}
    Let $f: \D\to \Omega$ be a conformal map onto a simply connected domain $\Omega$. Then $\Omega$ is convex if and only if \begin{equation*}
        r |f'(re^{i\theta})| \quad \text{is an increasing function of $r\in [0,1)$  for all $\theta \in [-\pi, \pi]$}.
    \end{equation*}
\end{lemma}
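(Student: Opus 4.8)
The plan is to reduce the claim to Lemma \ref{harmonic}, the analytic characterization of convexity, by differentiating $\log(r|f'(re^{i\theta})|)$ in $r$ and recognizing the resulting quantity as (a multiple of) $\re(1 + z f''(z)/f'(z))$ with $z = re^{i\theta}$. First I would fix $\theta$ and write $z = re^{i\theta}$, so that $\partial/\partial r = e^{i\theta}\,\partial/\partial z$ along the ray. Then I would compute
\[
\frac{\partial}{\partial r}\log\!\left(r\,|f'(re^{i\theta})|\right) = \frac{1}{r} + \re\!\left(e^{i\theta}\frac{f''(z)}{f'(z)}\right) = \frac{1}{r}\left(1 + \re\!\left(z\,\frac{f''(z)}{f'(z)}\right)\right) = \frac{1}{r}\,\re\!\left(1 + z\frac{f''(z)}{f'(z)}\right),
\]
using that $|f'|$ never vanishes (so $\log|f'|$ is a well-defined harmonic function locally and the Cauchy–Riemann equations give $\partial_r \log|f'(re^{i\theta})| = \re(e^{i\theta} f''/f')$).

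Given this identity, the equivalence is immediate. If $\Omega$ is convex, Lemma \ref{harmonic} gives $\re(1 + z f''/f') > 0$ throughout $\D$, hence $\partial_r \log(r|f'(re^{i\theta})|) > 0$ for every $r \in (0,1)$ and every $\theta$, so $r \mapsto r|f'(re^{i\theta})|$ is strictly increasing (and in particular increasing). Conversely, if $r|f'(re^{i\theta})|$ is increasing in $r$ for every $\theta$, then $\partial_r \log(r|f'(re^{i\theta})|) \geq 0$ wherever it is defined, which forces $\re(1 + z f''/f') \geq 0$ on all of $\D$. Since this real part is a harmonic function that is not identically zero (it equals $1$ at the origin), the minimum principle upgrades $\geq 0$ to $> 0$ on the open disk, and Lemma \ref{harmonic} then yields that $f$ maps onto a convex domain.

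The only genuinely delicate point is the converse direction's passage from the non-strict inequality $\re(1 + z f''/f') \geq 0$ to the strict inequality required by Lemma \ref{harmonic}; this is handled by the minimum principle for harmonic functions together with the normalization value $1$ at $z=0$, so there is no real obstacle. One should also be slightly careful that the hypothesis only assumes $f$ is a conformal map onto a simply connected domain (so univalence is global, $f' \neq 0$ is automatic), which is exactly what is needed to invoke Lemma \ref{harmonic}. I expect the whole argument to be short; it is essentially a one-line computation wrapped around the two cited lemmas, and the authors' proof is likely of the same flavor (perhaps instead citing \cite{pascu2002scaling} directly, but the self-contained derivation above is the natural route).
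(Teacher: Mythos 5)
Your proposal is correct and follows essentially the same route as the paper: differentiate the radial quantity along a ray, recognize the sign of the derivative as the sign of $\re\bigl(1+zf''(z)/f'(z)\bigr)$, and invoke Lemma~\ref{harmonic}. The only cosmetic differences are that the paper differentiates $h_\theta(r)=r^2|f'(re^{i\theta})|^2$ rather than the logarithm, and it leaves the converse's upgrade from $\geq 0$ to $>0$ implicit, which you handle explicitly (and correctly) with the minimum principle and $v_f(0)=1$, exactly as the paper itself does in Corollary~\ref{con_sign}.
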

\begin{proof}
    For the convenience of the reader, we sketch a quick proof using only Lemma \ref{harmonic}. Given $\theta\in[-\pi,\pi]$, we let $h_\theta(r):= r^2|f'(re^{i\theta})|^2$. Then we have \begin{align*}
        h'_\theta(r) &= 2r|f'(re^{i\theta})|^2 + r^2\frac{d}{dr}f'(re^{i\theta})\overline{f'(e^{i\theta})}\\
         &= 2r|f'(re^{i\theta})|^2 + r^2\left(e^{i\theta}f''(re^{i\theta})\right)\overline{f'(e^{i\theta})}+ r^2f'(e^{i\theta})\overline{\left(e^{i\theta}f''(re^{i\theta})\right)}\\
         &= 2r|f'(re^{i\theta})|^2 +2r^2\re\left(e^{i\theta} f''(re^{i\theta})\overline{f'(re^{i\theta})}\right)\\
         &= 2r|f'(re^{i\theta})|^2\re\left( 1+\frac{re^{i\theta} f''(re^{i\theta})}{f'(re^{i\theta})}\right).
    \end{align*} The result then follows from Lemma~\ref{harmonic}.
\end{proof}

\subsection{Two Types of Perturbations}
Given a Riemann map $f$ of a bounded convex domain $\Omega$, in this paper, we will introduce two ways to "perturb" the map $f$ to generate new convex domains. First, we upgrade Lemma~\ref{convex}. 

\begin{corollary}\label{circles}
    Let $f$ be a Riemann map of bounded convex domain. Then for any disk $B\subset \D$, $f(B)$ is also a convex domain.
\end{corollary}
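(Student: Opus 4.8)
The plan is to reduce the statement about an arbitrary disk $B \subset \mathbb{D}$ to Study's Theorem (Lemma \ref{convex}) by composing $f$ with an automorphism of the disk. First I would fix a disk $B \subset \mathbb{D}$; since $B$ is compactly contained in $\mathbb{D}$ (being a subset of the open unit disk), and a disk is determined by its center $a$ and radius, I would choose a Möbius transformation $\varphi \in \mathrm{Aut}(\mathbb{D})$ of the form $\varphi(z) = (z+a)/(1+\bar a z)$ (up to a rotation) that maps the disk $r\mathbb{D}$ onto $B$ for a suitable $r \in (0,1)$; this is possible because disk automorphisms act transitively on the family of all internally tangent sub-disks through conjugation, and any Euclidean disk contained in $\mathbb{D}$ arises as the image of some concentric disk $r\mathbb{D}$ under some element of $\mathrm{Aut}(\mathbb{D})$. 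The composition $g := f \circ \varphi : \mathbb{D} \to \Omega$ is again a Riemann map onto the same convex domain $\Omega$, hence by Lemma \ref{convex} the image $g(r\mathbb{D})$ is convex. But $g(r\mathbb{D}) = f(\varphi(r\mathbb{D})) = f(B)$, which gives the claim.

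The one genuine step to verify is the geometric fact that every Euclidean disk $B \subset \mathbb{D}$ is the image of some concentric disk under a disk automorphism. I would argue this by noting that $\mathrm{Aut}(\mathbb{D})$ acts transitively on points of $\mathbb{D}$, so I may first move the center: choose $\psi \in \mathrm{Aut}(\mathbb{D})$ with $\psi(0)$ equal to the center of $\psi^{-1}$... more cleanly, pick the automorphism sending the center of $B$ to $0$; it maps $B$ to some convex region symmetric about $0$ under $z \mapsto -z$, but an automorphism need not send a disk to a disk centered at $0$. The cleaner route: automorphisms of $\mathbb{D}$ are Möbius maps, Möbius maps send circles to circles (or lines), and they preserve the hyperbolic metric; a Euclidean disk contained in $\mathbb{D}$ is exactly a hyperbolic disk (a hyperbolic ball in the Poincaré model is a Euclidean disk, though with shifted center), and $\mathrm{Aut}(\mathbb{D})$ acts transitively on hyperbolic balls of a fixed hyperbolic radius while the radius can be matched by choosing $r$. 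So for the given $B$, let $\rho$ be its hyperbolic radius; then $r\mathbb{D}$ has hyperbolic radius $\log\frac{1+r}{1-r}$, which ranges over all of $(0,\infty)$, so choose $r$ making these equal, and then choose $\varphi \in \mathrm{Aut}(\mathbb{D})$ carrying the hyperbolic center of $r\mathbb{D}$ (namely $0$) to the hyperbolic center of $B$.

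The main obstacle is thus purely this bookkeeping with hyperbolic versus Euclidean centers — it is entirely routine, but one must be careful that "disk" means Euclidean disk in the statement while the natural object preserved by $\mathrm{Aut}(\mathbb{D})$ is the hyperbolic disk; fortunately these two classes of subsets of $\mathbb{D}$ coincide (with different centers and radii), which is the classical fact that makes the argument go through. No perturbation or analytic estimate is needed: the corollary is a soft consequence of Study's Theorem together with conformal invariance of convexity under precomposition with automorphisms.
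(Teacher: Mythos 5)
Your argument for the case $\overline{B}\subseteq\mathbb{D}$ is fine and is essentially the paper's: exhibit $B$ as the image of a concentric disk $r\mathbb{D}$ under an automorphism of $\mathbb{D}$ and invoke Study's Theorem; whether you do the bookkeeping via hyperbolic balls (as you do) or by transporting everything to the upper half-plane (as the paper does) is immaterial. However, there is a genuine gap at the very first step: you assert that any disk $B\subset\mathbb{D}$ ``is compactly contained in $\mathbb{D}$ (being a subset of the open unit disk).'' That is false. An open disk can be contained in $\mathbb{D}$ while being internally tangent to $\partial\mathbb{D}$, e.g.\ $B_{a}(1-a)$ for $a\in(0,1]$, and such a disk is \emph{not} a hyperbolic ball (it is a horodisk, of infinite hyperbolic radius), so it is not the image of any $r\mathbb{D}$ under an element of $\mathrm{Aut}(\mathbb{D})$ and your automorphism argument cannot reach it. This is not a peripheral case: the disks $D_a=B_a(1-a)$, all tangent to $\partial\mathbb{D}$ at $1$, are exactly the ones the paper later feeds into this corollary in Section~3, so the tangent case is the one that matters.

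The paper closes this case by a separate limiting argument: assuming WLOG $\partial B\cap\partial\mathbb{D}=\{1\}$, it translates $B$ slightly inward so that $\overline{B}-t\subseteq\mathbb{D}$, applies the interior case to get convexity of $f(B-t)$, and then passes to the limit $t\to0$, using that $f(B)$ is the topological limit of the convex sets $f(B-t)$ and that convexity survives such limits. To repair your proof you would need to add this (or some equivalent) approximation step; the hyperbolic-ball reduction alone cannot be made to work for tangent disks.
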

\begin{proof}
    We divide the proof into two cases: the case   $\overline{B}\subseteq\D$ and the case where  $\partial B$ touches $\partial \D$.  We start with the case $\overline{B}\subseteq\D$. Here, we will show that we can map $B$ to some $r\D$ using a Möbius transformation of the unit disk. Consider the Möbius transformation $\varphi: \D\to\H$ given by \begin{equation*}
        \phi(z)=  i\frac{1+z}{1-z}.
    \end{equation*}
    This map (see Figure~\ref{fig:concirc}) satisfies that \begin{equation*}
        \varphi(r\D) = B_\frac{2r}{1-r^2}\left(i\frac{1+r^2}{1-r^2}\right).
    \end{equation*} Setting $\lambda = (1+r^2)/(1-r^2)$, we have $
        \varphi(r\D) = B_{\sqrt{\lambda ^2-1}}(\lambda i)$,
      where $r\in(0,1)$ corresponds to $\lambda\in(1,\infty)$ bijectively (we write $r = r(\lambda)$). 
      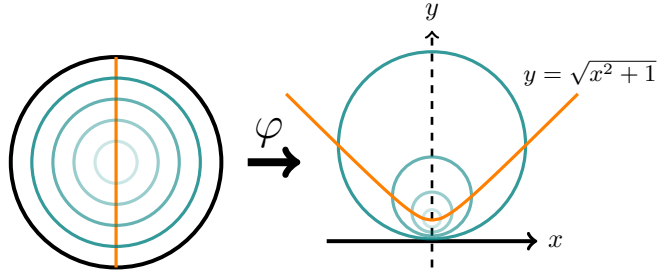
\begin{figure}[ht]
        \begin{tikzpicture}[scale=0.7]
            \draw [line width=1.5] (-3, 0) circle (2);
            \draw [line width=1.2, color=orange](-3,-2)--(-3,2);
            \draw[->, line width=1.5] (1,-1.5)--(5,-1.5) node[right]{$x$};
            \draw[->, dashed, line width=1] (3,-2)--(3,2.5) node[above]{$y$};
            \tc{0.2}
            \tc{0.4}
            \tc{0.6}
            \tc{0.8}
            \draw[domain=1:7, samples=100, line width=1.2, color = orange, smooth, variable=\t] plot ({0.4*sqrt(\t^2-1)+3},{0.4*\t-1.5});
                \draw[domain=1:7, samples=100, line width=1.2, color = orange, smooth, variable=\t] plot ({-0.4*sqrt(\t^2-1)+3},{0.4*\t-1.5});       
            \draw [->,line width=3pt] (-0.5,0) -- (0.5,0) node[pos=0.4,above=1pt] {\huge $\varphi$};
            \node at (6,1.7) {\small $y=\sqrt{x^2+1}$}; 
        \end{tikzpicture}
        \caption{Image of concentric circles under $\varphi$}
        \label{fig:concirc}
    \end{figure}
    Suppose $\varphi$ maps $B$ to a disk $B_c(a+bi)$ in $\H$. Since $\overline{B}\subseteq \D$, we have that $c<b$. Note that \begin{equation*}
        \varphi_0(B_c(a+bi)) = B_{\frac{c}{b^2-c^2}}\left(\frac{bi}{b^2-c^2}\right) = B_{\sqrt{\lambda'^2-1}}(\lambda'i) = \varphi(r'\D),
    \end{equation*} where $\lambda'=b/(b^2-c^2)$, $r' = r(\lambda')$ and \begin{equation*}
        \varphi_0(z) = \frac{z-a}{b^2-c^2}.
    \end{equation*}
    is an automorphism of $\H$. Then $\varphi_0(B_r(a+bi))$ is the image of some $r'\D$ under $\varphi$. Hence, $B_c(a+bi)$ is the image of some $r'\D$ under $\varphi_0^{-1}\circ \varphi:\D\to\H$. This shows that $B$ is an image of some $r\D$ under the Möbius transform $\varphi_B=\varphi^{-1}\circ\varphi_0\circ \varphi\in \mathrm{Aut}(\D)$. Note that $f\circ \varphi_B^{-1}$ is a conformal map with convex image. Thus $f\circ \varphi_B^{-1}(r\D) = f(B)$ is an analytic convex domain by Lemma~\ref{convex}.  
    It remains to deal with the case where $\partial B$ touches $\partial \D$: without the loss of generality, we suppose that $\partial B\cap\partial \D = \set{1}$. Let $t>0$ be such that $\overline{B}-t \subseteq \D$. Then we have that $f(B)$ is a topological limit of $f(B-t)$ as $t\to 0$. The result follows from the first case.
\end{proof}

By restricting $f$ to a smaller disk $B\subseteq\D$, we can get a convex sub-domain of the original convex domain and its corresponding Riemann map. We refer to this method for producing new convex maps as \textit{domain restriction} (see Figure~\ref{fig:dr}).

\begin{figure}[ht]
    \centering
    \begin{tikzpicture}\draw [line width=1.5] (-6, 0) circle (2);
    \fill[color=orange, opacity=0.7] (-5.86, 0.1) circle (1.66);
    \draw[domain=0:360,smooth,samples=30,variable=\t, line width=1.5, draw opacity=1]
  plot({2*(sin(\t)-0.1*sin(3*\t))}, {2*(cos(\t)-0.1*cos(3*\t))});
\fill[domain=0:360,smooth,samples=30,variable=\t, color=orange, opacity=0.7]
  plot({2*(0.83*sin(\t) + 0.05 - 0.1*(-0.83*sin(\t) - 0.05)^3 - 0.3*(0.83*sin(\t) + 0.05)*(0.83*cos(\t) + 0.07)^2)}, {2*(0.83*cos(\t) + 0.07 + 0.3*(0.83*sin(\t) + 0.05)^2*(0.83*cos(\t) + 0.07) - 0.1*(0.83*cos(\t) + 0.07)^3)});
    \draw [->,line width=3pt] (-3.5,0) -- (-2.5,0) node[pos=0.4,above=1pt] {\huge $f$};
    \node at (-5.85, 0.1) {\Huge $B$};
    \node at (0.2, 0.1) {\Huge $f(B)$};
    \end{tikzpicture}
    \caption{Domain restriction.}
    \label{fig:dr}
\end{figure}
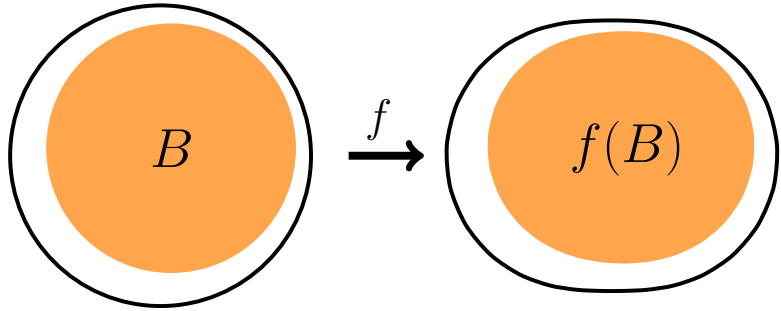

Secondly, we consider adding a noise to the map $f$ in the direction of a new function $g:\mathbb{D} \rightarrow \mathbb{C}$. We set \begin{equation*}
    v_f(z):=\re\left(1+z\frac{f''(z)}{f'(z)}\right).
\end{equation*}
Note that $v_f(z)$ is a harmonic function. We can set \begin{equation*}
    k_f(\theta) := \liminf_{\D \ni z\to e^{i\theta}}v_f(z).
\end{equation*}

We call $k_f:[-\pi,\pi]\to [-\infty,\infty]$ the \textit{curvature signature} of $f$. We will discuss its relation to the curvature in Section~\ref{perturbation}. Note that \begin{equation*}
    v_f(0)=\re\left(1+0\cdot\frac{f''(0)}{f'(0)}\right)=1.
\end{equation*} It follows that if $k_f$ is a continuous extention of $v_f$ to $\partial\D$, then we have \begin{equation*}
    \int_{-\pi}^\pi k_f(\theta)d\theta= 2\pi v_f(0) = 2\pi.
\end{equation*} 

\begin{corollary}[Characterization by the curvature signature]\label{con_sign}
    Let $f:\D\to \C$ be a locally univalent analytic function. Then $f$ is a conformal map onto a convex domain if and only if $k_f(\theta) \geq 0$.
\end{corollary}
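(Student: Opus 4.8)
The plan is to reduce the statement entirely to Lemma~\ref{harmonic}. Since $f$ is locally univalent, $f'$ is zero-free on $\mathbb{D}$, so $p(z):=1+zf''(z)/f'(z)$ is analytic on $\mathbb{D}$ with $p(0)=1$, and hence $v_f=\re p$ is a harmonic function on $\mathbb{D}$. By Lemma~\ref{harmonic}, $f$ is a conformal map onto a convex domain precisely when $v_f>0$ throughout $\mathbb{D}$, so the whole content of the corollary is the equivalence between $v_f>0$ on $\mathbb{D}$ and $k_f(\theta)\geq 0$ for all $\theta$.

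The forward direction is immediate: if $f$ is a convex conformal map, then $v_f>0$ on $\mathbb{D}$ by Lemma~\ref{harmonic}, and passing to the $\liminf$ as $z\to e^{i\theta}$ gives $k_f(\theta)\geq 0$.

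For the converse I would first prove a one-sided minimum principle and then apply it to $h=v_f$: \emph{if $h$ is harmonic on a bounded domain $D$ and $\liminf_{D\ni z\to\zeta}h(z)\geq 0$ for every $\zeta\in\partial D$, then $h\geq 0$ on $D$}. The argument is a short compactness observation: if $h(z_0)=c<0$ for some $z_0\in D$, then the sublevel set $K=\{z\in D:h(z)\leq c\}$ is nonempty, bounded, and relatively closed in $D$, and it cannot accumulate at any point of $\partial D$ (there the $\liminf$ of $h$ would be $\leq c<0$), hence $K$ is compact; therefore $h$ attains a global minimum over $D$ at an interior point, and the strong minimum principle forces $h$ to be constant with value $\leq c<0$, contradicting the boundary hypothesis. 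Applying this with $D=\mathbb{D}$ and the hypothesis $\liminf_{z\to e^{i\theta}}v_f(z)=k_f(\theta)\geq 0$ gives $v_f\geq 0$ on $\mathbb{D}$; since $v_f(0)=1\neq 0$, $v_f$ is not identically zero, so one more application of the strong minimum principle upgrades this to $v_f>0$ on $\mathbb{D}$, and Lemma~\ref{harmonic} then yields that $f$ is a conformal map onto a convex domain.

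The step I expect to be the main obstacle is precisely the strict positivity, i.e. the one-sided minimum principle above. One cannot simply invoke the textbook minimum principle for functions continuous on $\overline{\mathbb{D}}$, because $v_f$ need not be bounded on approach to $\partial\mathbb{D}$ (for a polygon, $v_f$ blows up near the preimages of the vertices, where the curvature concentrates), which is exactly why the hypothesis is phrased via a $\liminf$ rather than as a genuine boundary value. The compactness argument sketched above is what bridges this gap, using crucially both that $\mathbb{D}$ is bounded and that the $\liminf$ bound is imposed at \emph{every} boundary point.
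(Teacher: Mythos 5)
Your proof is correct and follows essentially the same route as the paper: both directions reduce to Lemma~\ref{harmonic}, with the converse handled by a minimum principle for the harmonic function $v_f$ under $\liminf$ boundary data and the strict positivity recovered from $v_f(0)=1$ via the strong minimum principle. The only difference is that you prove the needed one-sided minimum principle by the sublevel-set compactness argument, whereas the paper simply asserts that $v_f$ is bounded below and cites the maximum (minimum) principle; your added detail is a legitimate filling-in of that step, not a different method.
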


\begin{proof}
    It is clear that $k_f\geq 0$ if $v_f >0$. Suppose that $k_f$ is nonnegative. Then $v_f$ is lower-bounded. Thus by the maximum (minimum) principle, we have \begin{equation*}
        v_f(z) \geq \inf_{\theta\in[-\pi,\pi]} k_f(\theta) \geq 0.
    \end{equation*}
    Since $v_f$ is a non-negative harmonic function that is strictly positive at $1$, it is strictly positive on $\D$.
\end{proof}

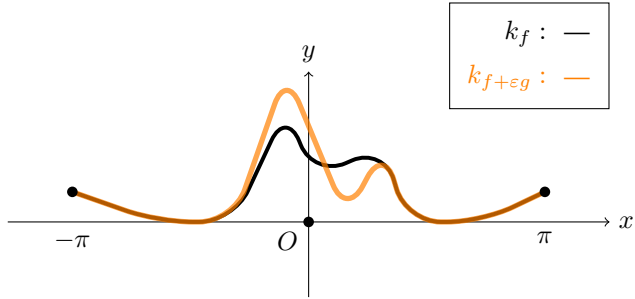
\begin{figure}[ht]
    \centering
    \begin{tikzpicture}
        \draw[->] (-4,0)--(4,0) node[right]{$x$};
        \draw[->] (0,-1)--(0,2) node[above]{$y$};

        \draw[line width = 1.5, rounded corners = 13] (3.14, 0.4) -- (2.3, 0) -- (1.3,0) -- (1, 1) --(0.07,0.6) -- (-0.3,1.5) -- (-1,0) -- (-2,0) -- (-3.14,0.4);
        \draw[line width = 2, rounded corners = 13, color=orange, opacity=0.7] (3.14, 0.4) -- (2.3, 0) -- (1.3,0) -- (1, 1) -- (0.5,0.07) -- (-0.3,2) -- (-1,0) -- (-2,0) -- (-3.14,0.4);

        \fill [black] (0,0) circle (2pt) node [black, below left=0.6pt]{$O$};
        \fill [black] (-3.141,0.4) circle (2pt) node [black, below=0.4]{$-\pi$};
        \fill [black] (3.141,0.4) circle (2pt) node [black, below =0.4]{$\pi$};

        \matrix [draw, above left] at (4,1.5) {  \node[black] {$k_f:$};& \node[color=black, font=\huge] {\textbf{\---}};\\  \node[orange] {$k_{f+\varepsilon g}:$};&\node[color=orange, font=\huge] {\textbf{\---}};\\
};
    \end{tikzpicture}
    \caption{Changing the curvature signature.}
    \label{fig:cs}
\end{figure}

Corollary~\ref{con_sign} gives us the second way to perturb a conformal map $f$: When adding a noise function $g$ to the original conformal map $f$, $f+\varepsilon g$ is a conformal map onto a convex domain so long as it is locally univalent and the curvature signature $k_{f+\varepsilon g}$ remains non-negative (see Figure~\ref{fig:cs}). 

\subsection{Setup for Torsion Functions}\label{setup} 
Given a Jordan domain $\Omega$ and its torsion function $u$, we set \begin{equation}
    \mathbb{L}_1(\Omega) := \frac{\left\Vert\nabla u \right\Vert_{L^\infty(\Omega)}}{|\Omega|^{1/2}} \qquad \mbox{and} \qquad \mathbb{L}_2(\Omega) := \frac{\left\Vert\nabla u \right\Vert_{L^\infty(\Omega)}}{\mathcal{H}^1(\partial\Omega)}.
\end{equation} Recall that we are interested in maximizing $\mathbb{L}_i$ over all convex bounded domains. Note that we have \begin{equation*}
    \| \nabla u\|_{L^{\infty}(\Omega)}^2 =\left\Vert\frac{\partial u}{\partial\mathbf{n}} \right\Vert^2_{L^\infty(\partial\Omega)}.
\end{equation*} We call the points on $\partial \Omega$ where $\left\Vert \partial u/\partial\mathbf{n} \right\Vert_{L^\infty(\partial\Omega)}$ are attained the \textit{extremal points}.
We will rewrite the quantities $\mathbb{L}_i(\Omega)$ in terms of a Riemann map $f$ of $\Omega$. Let us first recall Kellogg's theorem, which states the connection between the differentiability of $f$ on $\overline{\D}$ and the smoothness of $\partial \Omega$. 

\begin{lemma}[Kellogg's Theorem, \cite{kellogg1931derivatives}] Suppose $f:\D\to\Omega$ is a conformal map onto a Jordan domain $\Omega$. Let $k \geq 1$ and $0 < \alpha < 1$. Then the following are equivalent:
\begin{enumerate}[(a)]
    \item $\partial\Omega$ is a $C^{k+\alpha}$-smooth curve.
    \item $\arg f' \in C^{k-1+\alpha}(\partial \D)$.
    \item $f\in C^{k+\alpha}(\overline{\D})$ and $f'\neq 0$ on $\partial\D$.
\end{enumerate}
If $k\geq 1$ and $\alpha = 0$, then (a) and (b) are still equivalent and (c) implies both. However, (a) or (b) does not imply (c).     
\end{lemma}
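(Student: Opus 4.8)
The plan is to run the cycle $(a)\Rightarrow(b)\Rightarrow(c)\Rightarrow(a)$, settling the base case $k=1$ first and then bootstrapping in $k$. Two of the three implications are soft. For $(c)\Rightarrow(a)$: if $f\in C^{k+\alpha}(\overline{\D})$ and $f'\neq 0$ on $\partial\D$, then $\theta\mapsto f(e^{i\theta})$ is a regular parametrization of $\partial\Omega$ whose velocity $ie^{i\theta}f'(e^{i\theta})$ is nonvanishing and lies in $C^{k-1+\alpha}$; passing to the arc-length parametrization is a $C^{k+\alpha}$ change of variable, since the arc-length function has the nonvanishing $C^{k-1+\alpha}$ derivative $|f'(e^{i\theta})|$, so $\partial\Omega$ inherits regularity $C^{k+\alpha}$. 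For $(b)\Rightarrow(c)$: put $h:=\log f'$, a single-valued analytic function on $\D$ because $f'$ is zero-free on the simply connected disk, and observe that $\operatorname{Im}h$ is harmonic on $\D$ with boundary values $\arg f'\in C^{k-1+\alpha}(\partial\D)$ by hypothesis. By Privalov's theorem the conjugation operator preserves $C^{k-1+\alpha}(\partial\D)$ for $0<\alpha<1$, so $\operatorname{Re}h=\log|f'|$ has boundary values in $C^{k-1+\alpha}$ too, and the Poisson integral of a $C^{k-1+\alpha}$ boundary function lies in $C^{k-1+\alpha}(\overline{\D})$; hence $h\in C^{k-1+\alpha}(\overline{\D})$, $f'=e^{h}\in C^{k-1+\alpha}(\overline{\D})$ is nonvanishing on all of $\overline{\D}$, and integrating gives $f\in C^{k+\alpha}(\overline{\D})$. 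This is exactly where $0<\alpha<1$ is used — Privalov fails at $\alpha=0$, the conjugate of a merely continuous function need not be continuous — which is why $(a),(b)$ do not force $(c)$ in the endpoint case.

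The substantive implication is $(a)\Rightarrow(b)$; take $k=1$. Let $\Gamma(s)$, $s\in[0,L]$, be the arc-length parametrization of $\partial\Omega$, so $\Gamma'(s)=e^{i\beta(s)}$ with tangent angle $\beta\in C^{\alpha}$. Since $\partial\Omega$ is rectifiable, $f'\in H^1$, $f$ is absolutely continuous on $\partial\D$, and the boundary correspondence is a strictly increasing homeomorphism $\theta\mapsto s(\theta)$ with $s'(\theta)=|f'(e^{i\theta})|$ a.e.\ and $f(e^{i\theta})=\Gamma(s(\theta))$; differentiating gives $ie^{i\theta}f'(e^{i\theta})=e^{i\beta(s(\theta))}s'(\theta)$ a.e., i.e.
\[
\arg f'(e^{i\theta})=\beta(s(\theta))-\theta-\pi/2 \pmod{2\pi}\qquad\text{and}\qquad |f'(e^{i\theta})|=s'(\theta).
\]
The whole statement now reduces to the classical assertion that $f'$ extends to a continuous, nonvanishing function on $\overline{\D}$ whenever $\partial\Omega$ is $C^{1+\alpha}$: granting it, $s'(\theta)=|f'(e^{i\theta})|$ is continuous and bounded above and below, so $s$ is Lipschitz with Lipschitz inverse, the displayed identity holds everywhere by continuity, and $\arg f'=\beta\circ s-\theta-\pi/2$ is the composition of the $C^{\alpha}$ function $\beta$ with a Lipschitz function, hence lies in $C^{\alpha}(\partial\D)$, which is $(b)$. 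To get the nonvanishing continuous extension I would localize near a boundary point, where $\partial\Omega$ is a $C^{1+\alpha}$ graph $y=\varphi(x)$, and represent $\log f'$ for the conformal map of the region above such a graph onto a half-plane by a Cauchy-type (single-layer potential) integral whose density is built from the $C^{\alpha}$ tangent angle; the Plemelj/Privalov regularity theory for such integrals then yields that $\log f'$, and hence $f'$, extends Hölder-continuously and without zeros to the boundary (this is the Lindelöf–Warschawski–Kellogg step). For general $k$ one bootstraps: having $f\in C^{1+\alpha}(\overline{\D})$ with $f'\neq 0$, the change of variables $s\leftrightarrow\theta$ is as smooth as $f$, so $\beta(s(\theta))=\arg f'(e^{i\theta})+\theta+\pi/2$ transfers derivatives from the $s$-variable to the $\theta$-variable; each pass converts the current regularity of $f$ into the same regularity of $\arg f'$, after which $(b)\Rightarrow(c)$ upgrades $f$ by one order, iterating until $\arg f'\in C^{k-1+\alpha}(\partial\D)$.

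The main obstacle is precisely the flagged step: the a priori fact that $f'$ has a continuous, nonvanishing extension to the boundary of a domain with only $C^{1+\alpha}$ boundary. This is not a formal consequence of the boundary identity — it is the analytic heart of Kellogg's theorem — and it rests on the local reduction to a graph together with the boundedness of (variants of) the Hilbert transform on Hölder classes; everything else is the elementary change-of-variables bookkeeping above.
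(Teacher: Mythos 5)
The paper does not prove this lemma at all: it is quoted as a classical result and attributed to Kellogg \cite{kellogg1931derivatives} (with the modern form due to Warschawski; see also Pommerenke's account), so there is no in-paper argument to compare against. Your outline does follow the standard textbook route: the cycle $(a)\Rightarrow(b)\Rightarrow(c)\Rightarrow(a)$, with $(b)\Rightarrow(c)$ via $h=\log f'$ and the Privalov conjugate-function theorem, $(c)\Rightarrow(a)$ via the arc-length change of variables, and the bootstrap in $k$ through the identity $\arg f'(e^{i\theta})=\beta(s(\theta))-\theta-\pi/2$. Those parts are correct as written, and the induction does close.

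The genuine gap is the one you flagged yourself: the base case of $(a)\Rightarrow(b)$, i.e.\ that for a Jordan domain with $C^{1+\alpha}$ boundary the derivative $f'$ extends continuously and zero-free (indeed H\"older-continuously) to $\overline{\mathbb{D}}$. Everything else in your argument is bookkeeping that presupposes this; but this statement \emph{is} Kellogg's theorem in its essential form, and appealing to ``Plemelj/Privalov regularity theory for a Cauchy-type integral whose density is built from the tangent angle'' names the toolbox without carrying out the estimate (in particular, the lower bound keeping $f'$ away from $0$, and the localization from the graph model back to the Jordan domain, are exactly where the work lies). So as a self-contained proof the proposal is incomplete at its analytic core. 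In addition, the endpoint assertions in the statement are not established: the equivalence of (a) and (b) when $\alpha=0$ needs a separate (Lindel\"of-type) argument, since the H\"older machinery above is unavailable, and the claim that (a) or (b) does not imply (c) requires an explicit counterexample (e.g.\ a $C^1$ domain for which $f'$ is unbounded, built by prescribing a continuous $\arg f'$ whose conjugate $\log|f'|$ is unbounded); explaining that Privalov's theorem fails at $\alpha=0$ shows why your proof breaks, not that the implication itself is false. Given that the paper simply cites the result, citing it (as the paper does) or citing Warschawski/Pommerenke for the hard step would be the honest way to close these gaps.
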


With Kellogg's theorem, we can find a representation of the maximal normal derivative purely in terms of the Riemann map for convex $C^{1+\alpha}$ domains.

\begin{lemma}\label{function}
    Let $\Omega$ be a bounded convex $C^{1+\alpha}$ domain for some $\alpha>0$. Then for any Riemann map $f\in C^{1+\alpha}(\overline{\D})$ of $\Omega$ that sends $1$ to an extremal point on $\partial \Omega$,  
    \begin{equation*}
        \left\Vert\frac{\partial u}{\partial \mathbf{n}}\right\Vert_{L^{\infty}(\partial\Omega)} = \frac{1}{2\pi|f'(1)|}\int_\D |f'(w)|^2  \frac{1-|w|^2}{|1-w|^2} |dw|^2.
    \end{equation*}
\end{lemma}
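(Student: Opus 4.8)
The plan is to connect the PDE quantity $\partial u/\partial\mathbf n$ on $\partial\Omega$ to conformal data by pulling the torsion problem back to the disk under $f$. Writing $v = u\circ f$, the equation $-\Delta u = 1$ becomes $-\Delta v = |f'(w)|^2$ in $\mathbb D$ with $v = 0$ on $\partial\mathbb D$, since the Laplacian transforms by the conformal factor $|f'|^2$. Hence $v$ is an explicit Newtonian-type potential: using the Green's function $G(w,z)$ of the unit disk,
\begin{equation*}
    v(z) = \int_{\mathbb D} G(w,z)\,|f'(w)|^2\,|dw|^2,
\end{equation*}
where $G(w,z) = \frac{1}{2\pi}\log\left|\frac{1 - \bar w z}{w - z}\right|$. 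So the first step is to justify this change of variables rigorously (the $C^{1+\alpha}$ hypothesis via Kellogg's theorem guarantees $f'$ extends continuously and non-vanishingly to $\overline{\mathbb D}$, so $v \in C^{1}(\overline{\mathbb D})$ and the potential-theoretic representation is valid up to the boundary).

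Next I would compute the normal derivative. Because $f$ is conformal and $|f'|\neq 0$ on $\partial\mathbb D$, the chain rule gives $\left|\frac{\partial u}{\partial \mathbf n}\right|(f(e^{i\theta})) = \frac{1}{|f'(e^{i\theta})|}\left|\frac{\partial v}{\partial \mathbf n}\right|(e^{i\theta})$; both $u$ and $v$ vanish on the respective boundaries with the same sign, so the absolute values can be dropped in the correct orientation. Then $\frac{\partial v}{\partial \mathbf n}(e^{i\theta})$ is obtained by differentiating the Green potential: $\frac{\partial}{\partial \mathbf n_z}G(w,z)$ at $z = e^{i\theta}$ is (up to sign and the $2\pi$) the Poisson kernel $\frac{1}{2\pi}\frac{1 - |w|^2}{|e^{i\theta} - w|^2}$. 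This is the standard fact that the inward normal derivative of the Green's function is the Poisson kernel; differentiating under the integral sign is legitimate since $|f'|^2$ is bounded and the kernel derivative is integrable in $w$ for fixed boundary point. Evaluating at the extremal point $f(1)$, i.e. $\theta = 0$, produces exactly
\begin{equation*}
    \left|\frac{\partial u}{\partial \mathbf n}\right|(f(1)) = \frac{1}{2\pi |f'(1)|}\int_{\mathbb D} |f'(w)|^2\,\frac{1 - |w|^2}{|1 - w|^2}\,|dw|^2,
\end{equation*}
and since $f(1)$ is an extremal point this equals $\|\partial u/\partial\mathbf n\|_{L^\infty(\partial\Omega)}$.

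The main obstacle I anticipate is the boundary regularity bookkeeping needed to move derivatives across the integral and to make sense of normal derivatives at $\partial\mathbb D$: a priori the Green potential of a merely bounded density need not be $C^1$ up to the boundary, so one must use that $|f'|^2$ is actually Hölder continuous on $\overline{\mathbb D}$ (again from Kellogg, since $\partial\Omega \in C^{1+\alpha}$), which upgrades $v$ to $C^{1+\alpha}(\overline{\mathbb D})$ by Schauder theory and legitimizes the pointwise Poisson-kernel formula for $\partial v/\partial\mathbf n$. A secondary technical point is confirming that the extremal point, which by P\'olya's theorem lies on $\partial\Omega$, is genuinely attained and that choosing the normalization $f(1) = $ that point loses no generality (rotational freedom of the disk). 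Everything else is a routine application of the conformal invariance of the Laplacian and the classical identity relating the Green's function and the Poisson kernel.
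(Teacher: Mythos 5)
Your proposal is correct and follows essentially the same route as the paper: pull back to $-\Delta v = |f'|^2$ on $\mathbb{D}$, represent $v$ via the Green's function, use the H\"older continuity of $|f'|^2$ (from Kellogg) together with elliptic boundary regularity to justify the Poisson-kernel formula for $\partial v/\partial \mathbf{n}$, and relate the two normal derivatives by the chain rule at the extremal point $f(1)$. The only caveat is that your first justification for differentiating under the integral (boundedness of $|f'|^2$ alone) would not suffice, but you correctly identify and repair this with the H\"older/Schauder argument, which is exactly the ingredient the paper invokes via Gilbarg--Trudinger.
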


\begin{proof}
    Suppose that $\Omega$ is $C^{1+\alpha}$ with torsion function $u$. Let $f:\D\to \Omega$ be the Riemann map of $\Omega$. By Kellogg's theorem, we have $f\in C^{1+\alpha}(\overline{\D})$ and thus $f'$ is $\alpha$-Hölder continuos on $\overline{\D}$. Given the function $v = u\circ f$, we have \begin{align*}
        \frac{\partial^2v}{\partial x^2} (x,y) &= \frac{\partial^2u}{\partial x^2} (f(x,y)) \cdot \left(\frac{\partial \re f}{\partial x}(x,y)\right)^2 + \frac{\partial^2 u}{\partial y^2} (f(x,y)) \cdot \left(\frac{\partial \im f}{\partial x}(x,y)\right)^2 \\
        &+ 2\frac{\partial^2 u}{\partial x\partial y} (f(x,y)) \cdot \frac{\partial \re f}{\partial x}(x,y)\cdot \frac{\partial \im f}{\partial x}(x,y)\\
        &+ \frac{\partial u}{\partial x } (f(x,y)) \cdot \frac{\partial^2 \re f}{\partial x^2}(x,y) +   \frac{\partial u}{\partial y} (f(x,y)) \cdot \frac{\partial^2 \im f}{\partial x^2}(x,y)
       \end{align*}
       as well as
       \begin{align*}
        \frac{\partial^2v}{\partial y^2} (x,y) &= \frac{\partial^2u}{\partial x^2} (f(x,y)) \cdot \left(\frac{\partial \re f}{\partial y}(x,y)\right)^2 + \frac{\partial^2 u}{\partial y^2} (f(x,y)) \cdot \left(\frac{\partial \im f}{\partial y}(x,y)\right)^2 \\
        &+ 2\frac{\partial^2 u}{\partial x\partial y} (f(x,y)) \cdot \frac{\partial \re f}{\partial y}(x,y)\cdot \frac{\partial \im f}{\partial y}(x,y)\\
        &+ \frac{\partial u}{\partial x } (f(x,y)) \cdot \frac{\partial^2 \re f}{\partial y^2}(x,y) +   \frac{\partial u}{\partial y } (f(x,y)) \cdot \frac{\partial^2 \re f}{\partial y^2}(x,y).
    \end{align*}
By the Cauchy-Riemann equations, we have \begin{align*}
    \frac{\partial \re f}{\partial x} = \frac{\partial \im f}{\partial y} \qquad \mbox{and} \qquad 
    \frac{\partial \re f}{\partial y} = -\frac{\partial \im f}{\partial x}.
\end{align*} 
By plugging in the equations, it follows that $v$ solves the Poisson equation.
\begin{align}\label{torsion}
    -\Delta v = |f'|^2 \quad &\text{inside $\D$},\\
    v = 0 \quad &\text{on $\partial\D$}.\nonumber
\end{align} The unique solution to $\eqref{torsion}$ is given in terms of the Green's function $G(z,w)$ on $\D$: \begin{equation*}
    v(z) =  \int_\D G(z,w)|f'(w)|^2|dw|^2,\\
\end{equation*} where the Green's function on $\D$ has the expression (see, for example, \cite{garnett2005harmonic}) 
\begin{equation*}
    G(z,w) = \frac{1}{2\pi} \log \frac{|z-w|}{|1-\bar w z|}.
\end{equation*} 

Since $|f'|^2$ is Hölder-continuous on the closed unit disk $\overline{\D}$, \cite[Lemma 4.1]{gilbarg1977elliptic} tells us that the normal derivative $\partial v/\partial \mathbf{n}$ exists in the strong sense on $\partial \D$ and is continuous. It has the representation \begin{equation*}
    \frac{\partial v}{\partial \mathbf{n}}(e^{i\theta})= -\int_\D P(e^{i\theta}, w) |f'(w)|^2|dw|^2,
\end{equation*} where \begin{equation*}
    P(\zeta,w) := \frac{\partial G(\zeta, w)}{\partial \mathbf{n}_\zeta} = \frac{1}{2\pi} \frac{1-|w|^2}{|\zeta-w|^2}
\end{equation*} is the Poisson kernel of $\D$.
Note that by composition, we have \begin{equation}\label{compo}
    \nabla v(z) = f'(z) \cdot \nabla u(f(z)),
\end{equation} where the gradients are treated as complex numbers. Since both $u$ and $v$ vanish on the $C^{1+\alpha}$ boundary, the regular level set theorem \cite{lee2012introduction} tells us that \begin{equation*}
    |\nabla v(e^{i\theta})| = -\frac{\partial v}{\partial \mathbf{n}}(e^{i\theta}),\quad |\nabla u(f(e^{i\theta}))| = -\frac{\partial u}{\partial \mathbf{n}}(f(e^{i\theta})).
\end{equation*} It follows that \begin{equation*}
    \frac{\partial v}{\partial \mathbf{n}}(e^{i\theta}) = |f'(e^{i\theta})|\frac{\partial u}{\partial \mathbf{n}}(f(e^{i\theta})).
\end{equation*}

This shows that $-\partial u/\partial \mathbf{n}$ is continuous (in fact it has to be as $\Delta u \in C^{\infty}(\overline{\Omega})$ and $u\in C^{\infty}(\partial \Omega)$). Thus the extremal points exist. Using the conformal map $f: \D\to \Omega$ mapping $1$ to one of the extremal points, we have \begin{equation*}
    \left\Vert\frac{\partial u}{\partial \mathbf{n}}\right\Vert_{L^{\infty}(\partial\Omega)} = -\frac{1}{|f'(1)|}\frac{\partial v}{\partial \mathbf{n}}(1).
\end{equation*} 
The result then follows.
\end{proof}

With some abuse of the notation, we also refer to \begin{align*}
    P(w) &= \frac{1-|w|^2}{|1-w|^2} = \frac{1}{1-\bar w} - \frac{1}{1-1/w} =\frac{1}{1-w}+\frac{1}{1-\bar w} -1
\end{align*} as the Poisson kernel. 
Note that for a $C^{1+\alpha}(\overline{\D})$ Riemann map $f:\D\to \Omega$ of a Jordan domain $\Omega$ with $\alpha \geq 0$, we have that \begin{align*}
    |\Omega| = \int_\D |f'(w)|^2|dw|^2,\quad \mathcal{H}^1(\partial \Omega) = \int_{\partial\D} |f'(\zeta)||d\zeta|.
\end{align*}

It follows that we can express $\mathbb{L}_1(\Omega)$ and $\mathbb{L}_2(\Omega)$ in terms of a Riemann map of $\Omega$.

\begin{proposition}\label{verysmooth}
    Let $\Omega$ be a bounded convex $C^{1+\alpha}$ domain for some $\alpha\in(0,1)$. For any Riemann map $f$ of $\Omega$ that sends $1$ to an extremal point on $\partial\Omega$, we have \begin{align*}
         L_1(f):&= \frac{\int_\D |f'(w)|^2 P(w)|dw|^2}{2\pi|f'(1)|\left(\int_\D |f'(w)|^2|dw|^2\right)^{1/2}} = \mathbb{L}_1(\Omega) 
         \end{align*}
         and
         \begin{align*}
        L_2(f):&= \frac{\int_\D |f'(w)|^2 P(w)|dw|^2}{2\pi|f'(1)| \int_{\partial \D} |f'(\zeta)||d\zeta|}=\mathbb{L}_2(\Omega).\\
    \end{align*}
\end{proposition}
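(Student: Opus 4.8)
The plan is to assemble Proposition~\ref{verysmooth} directly from the pieces already established, so the proof is mostly bookkeeping. First I would invoke Lemma~\ref{function}: since $\Omega$ is convex and $C^{1+\alpha}$, and $f$ is a Riemann map sending $1$ to an extremal point, Kellogg's theorem gives $f \in C^{1+\alpha}(\overline{\D})$ with $f' \neq 0$ on $\partial\D$, and the lemma yields
\begin{equation*}
    \left\Vert \frac{\partial u}{\partial\mathbf{n}} \right\Vert_{L^\infty(\partial\Omega)} = \frac{1}{2\pi|f'(1)|} \int_\D |f'(w)|^2 P(w)\,|dw|^2,
\end{equation*}
where $P(w) = (1-|w|^2)/|1-w|^2$ is the Poisson kernel at the boundary point $1$. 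Combined with the identity $\|\nabla u\|_{L^\infty(\Omega)}^2 = \|\partial u/\partial\mathbf{n}\|_{L^\infty(\partial\Omega)}^2$ recorded in Section~\ref{setup}, and the fact (P\'olya, or the maximum principle for $|\nabla u|^2 + 2u$) that the $L^\infty$ norm of $|\nabla u|$ is attained on $\partial\Omega$, this computes the numerator $\|\nabla u\|_{L^\infty(\Omega)}$ of both $\mathbb{L}_1$ and $\mathbb{L}_2$.

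Next I would handle the two denominators. The area formula for a conformal map gives $|\Omega| = \int_\D |f'(w)|^2\,|dw|^2$ by the change of variables $w \mapsto f(w)$ with Jacobian $|f'(w)|^2$; this is valid here because $f' \in C^{\alpha}(\overline{\D})$ is bounded, so $|f'|^2$ is integrable on $\D$. For the perimeter, since $\partial\Omega$ is $C^{1+\alpha}$ and $f \in C^{1+\alpha}(\overline{\D})$ with $f' \neq 0$ on $\partial\D$, the boundary curve $\theta \mapsto f(e^{i\theta})$ is rectifiable with $\mathcal{H}^1(\partial\Omega) = \int_{-\pi}^{\pi} |f'(e^{i\theta})|\,d\theta = \int_{\partial\D} |f'(\zeta)|\,|d\zeta|$; these identities are already stated in the paragraph preceding the proposition, so I would simply cite them.

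Finally I would divide. Plugging the three expressions into the definitions $\mathbb{L}_1(\Omega) = \|\nabla u\|_{L^\infty(\Omega)}/|\Omega|^{1/2}$ and $\mathbb{L}_2(\Omega) = \|\nabla u\|_{L^\infty(\Omega)}/\mathcal{H}^1(\partial\Omega)$ yields exactly $L_1(f)$ and $L_2(f)$ as written, noting that the two numerators are the same integral $\int_\D |f'(w)|^2 P(w)\,|dw|^2$ divided by $2\pi|f'(1)|$. One small point worth addressing explicitly: the formula is independent of the choice of extremal point and of the particular Riemann map among those sending $1$ to that point, since pre-composing $f$ with a rotation of $\D$ fixing $1$ (the only remaining automorphism freedom once $f(1)$ is pinned down to a boundary point — in fact the three-parameter automorphism group is fully used up) leaves all three integrals invariant; I would remark on this for completeness. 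I do not anticipate a genuine obstacle here — the content is entirely in Lemma~\ref{function} — so the only care needed is making sure the regularity hypothesis $\alpha \in (0,1)$ is enough to justify the change-of-variables and arc-length formulas, which it is by Kellogg's theorem.
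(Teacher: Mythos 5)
Your argument is correct and is essentially the paper's own: the proposition is presented there as an immediate consequence of Lemma~\ref{function} combined with the identity $\|\nabla u\|_{L^\infty(\Omega)}=\|\partial u/\partial\mathbf{n}\|_{L^\infty(\partial\Omega)}$ and the conformal area and arc-length formulas, exactly as you assemble it. One cosmetic correction to your closing remark: after prescribing $f(1)$ the residual freedom is the two-parameter family of automorphisms of $\mathbb{D}$ fixing $1$ (not rotations), and these change $\int_{\mathbb{D}}|f'(w)|^2P(w)\,|dw|^2$ and $|f'(1)|$ individually (only their ratio is invariant); well-definedness really follows from the fact that Lemma~\ref{function} applies to every Riemann map sending $1$ to an extremal point, so the remark is unnecessary (and, as phrased, slightly inaccurate) but harmless to the proof.
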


\subsection{$L_i$ is well-defined.} It will be important for us to be able to use Proposition \ref{verysmooth} also in a slightly rougher setting. Indeed, we do not necessarily need the conformal map $f$ to be $C^{1+\alpha}(\overline{\D})$, the functional tells us what is needed: we require that \begin{enumerate}
    \item $|f'(z)|^2$ is integrable on $\D$.
    \item $|f'(z)|^2P(z)$ is integrable on $\D$.
    \item $f'(z)$ is defined at $1$. 
    \item $f'(z)$ is defined almost everywhere on $\partial\D$ and $|f'(z)|$ is integrable on $\partial\D$.
\end{enumerate}

If $f$ is a conformal map of a bounded domain, Condition~(1) is always true and we always have $\int_\D |f'(w)|^2|dw|^2 = |f(\D)|$. We will now show that in our setting condition (4) is also automatically satisfied. It then suffices to check Conditions (2) and (3). To check Conditions~(3) and (4), we need the concept of \textit{nontangential limits}. One can refer to \cite{garnett2005harmonic} for a more detailed introduction for nontangential limits. For $e^{i\theta} \in \partial \D$ and $\alpha > 1$, we define the \textit{cone} \begin{equation*}
    C_\alpha(\theta) = \set{
z \in \D: |z-e^{i\theta}| < \alpha(1-|z|)}
\end{equation*}
Note that $C_\alpha(\theta)$ is an increasing family of subsets in $\alpha$. The cone $C_\alpha(\theta)$ is asymptotic to a sector with vertex $e^{i\theta}$ and angle $2\sec^{-1}(\alpha)$ that is symmetric about the radius $[0, e^{i\theta}]$ (See Figure \ref{fig:cone}).

\begin{figure}[ht]
    \centering
    \begin{tikzpicture}\draw [line width=1.5] (0, 0) circle (2.5) ;
\fill[domain=0:360,smooth,samples=80,variable=\t, color=blue, opacity=0.5, line width=1.5]
  plot({-2.5*(-1.44+cos(60-\t)+sqrt((cos(60-\t)-1.44)^2-(1-1.44)^2))/(1.44-1)*cos(\t)},{-2.5*(-1.44+cos(60-\t)+sqrt((cos(60-\t)-1.44)^2-(1-1.44)^2))/(1.44-1)*sin(\t)});
    \draw node at (0.4,0.65) { $C_{1.2}(\frac{\pi}{3})$};
    \draw[line width=1.5, dashed] ({2.5*cos(60)},{2.5*sin(60)}) -- ({2.5*cos(60)+3.3*cos(0.578*360)},{2.5*sin(60)+3.3*sin(0.578*360)});
    \draw[line width=1.5, dashed] ({2.5*cos(60)},{2.5*sin(60)}) -- ({2.5*cos(60)+3.3*cos(0.755*360)},{2.5*sin(60)+3.3*sin(0.755*360)});
    \fill [teal] ({2.5*cos(60)},{2.5*sin(60)}) circle (3pt) node [black, above right=0.6pt]{$e^{i\pi/3}$};
     \draw[line width=1.5,<->] ({2.5*cos(60)+3*cos(0.582*360)},{2.5*sin(60)+3*sin(0.582*360)})  arc({0.582*360}:{0.75*360}:3) node[pos=.7, below left=0.3pt]{$2\sec^{-1}(1.2)$};
    \end{tikzpicture}
    \caption{Cone $C_{1.2}(\pi/3)$ highlighted in blue}
    \label{fig:cone}
\end{figure}
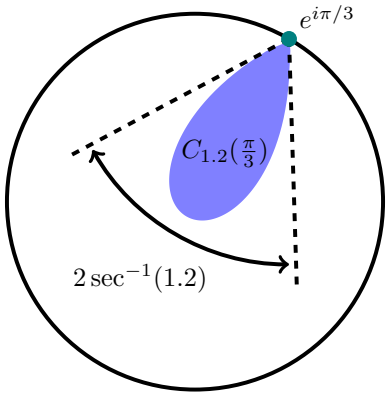

\begin{definition}[Nontangential limit]
    We say a function $u: \D \to \C$ has a \textit{nontangential limit} $A$ at $e^{i\theta}\in \partial \D$ if for all $\alpha >1$, we have \begin{equation*}
    \lim_{C_\alpha(\theta)\ni z \to e^{i\theta}} u(z) = A.
\end{equation*} 
\end{definition}
A celebrated result about nontangential limits of conformal maps is the following. \begin{lemma}[\cite{garnett2005harmonic}]\label{wfm}
    Let $\Omega$ be a Jordan domain and $f$ a conformal map from $\D$ onto $\Omega$. Then the curve $\partial \Omega$ is rectifiable if and only if $f'$ is in the Hardy space $\mathbf{H}^1$. If $f'\in \mathbf{H}^1$, then $f'$ admits has a nontangential limit $f'(e^{i\theta})$ almost everywhere on $\partial \D$ and \begin{equation*}
        \int_{\partial\D}|f'(\zeta)||d\zeta| = \mathcal{H}^1(\partial \Omega).
    \end{equation*}
\end{lemma}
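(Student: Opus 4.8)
The plan is to derive Lemma~\ref{wfm} from the F.\ and M.\ Riesz theorem, treating everything else as bookkeeping with Fourier and Taylor coefficients. Throughout I would write $\phi(\theta):=f(e^{i\theta})$; by Carathéodory's theorem $\phi$ is a continuous parametrization of the Jordan curve $\partial\Omega$ that is injective and traverses $\partial\Omega$ exactly once, and since $f$ is continuous on $\overline{\D}$ the Fourier coefficients of $\phi$ coincide with the Taylor coefficients of $f$, so $\widehat\phi(k)=0$ for $k<0$.

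For the implication ``$\partial\Omega$ rectifiable $\Rightarrow f'\in\mathbf{H}^1$'' I would first note that rectifiability is exactly the statement that $\phi$ has bounded variation, with total variation $\mathcal{H}^1(\partial\Omega)$, so $d\phi$ is a finite complex Borel measure on $\partial\D$. Integrating by parts and using $\widehat\phi(-n)=0$ for $n\ge 1$ together with $\phi(2\pi)=\phi(0)$, I would check that $\int_{\partial\D}\zeta^{\,n}\,d\phi(\zeta)=0$ for all $n\ge 0$, i.e.\ $d\phi$ is an analytic measure; the F.\ and M.\ Riesz theorem then forces $d\phi\ll d\theta$, so $\phi$ is absolutely continuous and $\phi'\in L^1(\partial\D)$. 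Because $\widehat{\phi'}(k)=ik\,\widehat\phi(k)$ vanishes for $k\le 0$, the $L^1$ function $\phi'$ is the boundary function of an $\mathbf{H}^1$ function, and a comparison of Taylor coefficients identifies that function with $izf'(z)$; dividing out the zero of $izf'$ at the origin gives $f'\in\mathbf{H}^1$, and the a.e.\ existence of the nontangential limit $f'(e^{i\theta})$ (with $\phi'(\theta)=ie^{i\theta}f'(e^{i\theta})$ a.e.) then follows from the Fatou theorem for Hardy spaces.

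The converse ``$f'\in\mathbf{H}^1\Rightarrow\partial\Omega$ rectifiable'' is the easy direction: $izf'(z)\in\mathbf{H}^1$ vanishes at $0$, its boundary function $g\in L^1$ has $\widehat g(k)=0$ for $k\le 0$, and $\psi(\theta):=\widehat f(0)+\int_0^\theta g$ is a well-defined absolutely continuous function on $\partial\D$ with the same Fourier coefficients as $\phi$; hence $\phi=\psi$ is absolutely continuous, $\partial\Omega$ is rectifiable, and — using that $\phi$ parametrizes $\partial\Omega$ injectively, so that its total variation is the length $\mathcal{H}^1(\partial\Omega)$ — one gets $\mathcal{H}^1(\partial\Omega)=\int_0^{2\pi}|\psi'(\theta)|\,d\theta=\int_{\partial\D}|f'(\zeta)|\,|d\zeta|$.

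I expect the only genuinely substantive step to be the invocation of the F.\ and M.\ Riesz theorem, which is precisely what upgrades ``bounded variation'' to ``absolute continuity'' of the boundary values of $f$; without it $d\phi$ could carry a singular part and $f'$ would fail to lie in $\mathbf{H}^1$. The remaining care goes into matching the a.e.\ angular derivative of the boundary curve with the a.e.\ nontangential limit of the analytic function $f'$, but this is dispatched by the coefficient identification once $f'\in\mathbf{H}^1$ is known. Since Lemma~\ref{wfm} is quoted as a classical result, in the write-up I would ultimately just cite \cite{garnett2005harmonic} for these facts rather than reproduce the above in full.
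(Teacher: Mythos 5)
Your argument is the standard classical proof (via the F.\ and M.\ Riesz theorem, exactly as in the cited reference), and the paper itself gives no proof of this lemma --- it simply cites \cite{garnett2005harmonic} --- so you are following the intended route. One micro-slip: your $\psi$ need not have the same $k=0$ Fourier coefficient as $\varphi$, so you only get that $\varphi-\psi$ is constant; but that already yields absolute continuity of $\varphi$ with $\varphi'(\theta)=ie^{i\theta}f'(e^{i\theta})$ a.e., which is all the conclusion requires.
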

Lemma~\ref{wfm} implies that if $f$ is a conformal map of a bounded convex domain (recall that closed convex curves are rectifiable), then Condition~4 is always satisfied and the integral $\int_{\partial\D} |f'(\zeta)||d\zeta|$ is always the length of $f(\partial\D)$.
\begin{definition}[Families of convex conformal maps]
    Let $\mathbf{F}$ be the set of all conformal maps of bounded convex domains. In particular, since the area is finite and given by $\int_{\mathbb{D}} |f'(z)|^2 dz$, we automatically know that all functions of interest satisfy $f' \in L^2(\D)$.
    That is, by Lemma~\ref{harmonic},\begin{equation*}
        \mathbf{F} := \set{f~\text{analytic on $\D$}~|~ f'\neq 0, v_f> 0 ~\text{on}~\D, \quad f' \in L^2(\D)},
    \end{equation*}
    Let $\mathbf{F}_0$ is a subfamily of $\mathbf{F}$ such that for each $f \in \mathbf{F}_0$, $|f'(z)|^2P(z)$ is integrable on $\D$ and $f'(z)$ has a nontangential limit at $1$. That is, \begin{equation*}
        \mathbf{F_0} := \set{f\in \mathbf{F}|~\text{$f'$ has a nontangential limit at $1$ and }|f'|^2P \in L^1(\D)}.
\end{equation*}
\end{definition}

The functional $L_1$ and $L_2$ are well-defined functionals on $\mathbf{F}_0$. We will show in Section~\ref{goingback} that \begin{align*}
    \sup_{f\in \mathbf{F}_0}L_1(f) &= \sup_{\text{\tiny $\Omega$ convex, bounded}}\mathbb{L}_1(\Omega),\\ \sup_{f\in\mathbf{F}_0}L_2(f) &= \sup_{\text{\tiny $\Omega$ convex, bounded}}\mathbb{L}_2(\Omega).
\end{align*}

For the ease of computation, we abbreviate 
$$
    \area(f) := \int_{\D}|f'(w)|^2|dw|^2
\qquad \mbox{and} \qquad     \len(f) := \int_{\partial\D}|f'(\zeta)||d\zeta|
$$
  whose names are a consequence of the fact that for sufficiently nice Riemann maps, $\area(f)$ is the area of the image and $\len(f)$ is the length of the boundary of the image. Motivated by the preceding considerations we also introduce the functional
  $$
    \poi(f) := \int_{\D}|f'(w)|^2P(w)|dw|^2.
$$
With these abbreviations in place, we can write the two functionals concisely as \begin{align*}
    L_1(f) = \frac{\poi(f)}{2\pi |f'(1)|(\area(f))^{1/2}} \qquad \mbox{and} \qquad   
    L_2(f) = \frac{\poi(f)}{2\pi |f'(1)|\len(f)}.
\end{align*}
We quickly note variation identities for $\area(\cdot)$ and $\poi(\cdot)$. 
\begin{lemma}\label{first_variation}
    
    Given $f\in \mathbf{F_0}$ and $g$ holomorphic on $\D$ and $C^1$ on $\overline{\D}$, we have \begin{align*}
        \left.\frac{d}{dt}\area(f+tg)\right|_{t=0} &=2 \int_{\D}\re(\overline{f'(w)}g'(w))|dw|^2,\\
    \left.\frac{d}{dt}\poi(f+tg)\right|_{t=0} &=2 \int_{\D}\re(\overline{f'(w)}g'(w))P(w)|dw|^2,\\
    \left.\frac{d}{dt}\left|f'(1)+tg'(1)\right|^2\right|_{t=0} &= 2\re(\overline{f'(1)}g'(1)).
    \end{align*}
\end{lemma}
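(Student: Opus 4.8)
The plan is to reduce all three identities to the elementary pointwise expansion
\[
|f'(w)+tg'(w)|^2 = |f'(w)|^2 + 2t\,\re\!\big(\overline{f'(w)}g'(w)\big) + t^2|g'(w)|^2,
\]
to integrate this against the appropriate measure ($|dw|^2$ for $\area$, $P(w)|dw|^2$ for $\poi$), and to observe that each of $\area(f+tg)$ and $\poi(f+tg)$ thereby becomes a genuine quadratic polynomial in $t$ with coefficients that do not depend on $t$; its derivative at $t=0$ is then exactly the claimed linear-in-$g$ term, with no appeal to differentiation under the integral sign. The only real content is checking that the integrals defining those coefficients are finite.

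For $\area(\cdot)$: since $g\in C^1(\overline{\D})$ its derivative $g'$ is bounded on $\overline{\D}$, and $f'\in L^2(\D)$ because $f\in\mathbf{F}\supseteq\mathbf{F}_0$. Hence $\int_\D |g'|^2|dw|^2<\infty$, and by Cauchy--Schwarz $\int_\D |\re(\overline{f'}g')|\,|dw|^2 \le \|g'\|_{L^\infty}\|f'\|_{L^2}|\D|^{1/2}<\infty$. Therefore $\area(f+tg)=\area(f)+2t\int_\D\re(\overline{f'}g')|dw|^2 + t^2\int_\D|g'|^2|dw|^2$, and differentiating at $t=0$ gives the first identity.

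For $\poi(\cdot)$ the same expansion integrated against $P(w)|dw|^2$ works once we know the three integrals $\int_\D|f'|^2P$, $\int_\D|\re(\overline{f'}g')|P$, $\int_\D|g'|^2P$ are finite. The first is finite by the definition of $\mathbf{F}_0$. For the other two the key elementary observation is that $\int_\D P(w)|dw|^2<\infty$: near $w=1$ one has $P(w)=(1-|w|^2)/|1-w|^2\asymp 1/|1-w|$, which is locally integrable in $\R^2$. Consequently $\int_\D|g'|^2P \le \|g'\|_{L^\infty}^2\int_\D P<\infty$, and by Cauchy--Schwarz $\int_\D|\re(\overline{f'}g')|P \le \|g'\|_{L^\infty}\big(\int_\D|f'|^2P\big)^{1/2}\big(\int_\D P\big)^{1/2}<\infty$. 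Again $\poi(f+tg)$ is a quadratic polynomial in $t$ whose derivative at $t=0$ is $2\int_\D\re(\overline{f'}g')P|dw|^2$. Finally, the third identity is purely algebraic: $f'(1)$ denotes the nontangential limit of $f'$ at $1$ (which exists because $f\in\mathbf{F}_0$) and $g'(1)$ the honest boundary value of $g'\in C^1(\overline{\D})$, so $|f'(1)+tg'(1)|^2=|f'(1)|^2+2t\,\re(\overline{f'(1)}g'(1))+t^2|g'(1)|^2$, and its $t$-derivative at $0$ is $2\re(\overline{f'(1)}g'(1))$. The only place demanding any care is the integrability bookkeeping for $\poi$, and even that is immediate once the estimate $\int_\D P<\infty$ is recorded; there is no genuine obstacle.
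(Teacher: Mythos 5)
Your proposal is correct and takes essentially the same route as the paper's proof: expand $|f'(w)+tg'(w)|^2$ pointwise, observe that $\area(f+tg)$, $\poi(f+tg)$ and $|f'(1)+tg'(1)|^2$ are then quadratic polynomials in $t$, and read off the coefficient of $t$. The only difference is that you spell out the integrability bookkeeping (in particular the estimate $\int_\D P(w)\,|dw|^2<\infty$ and the Cauchy--Schwarz bounds for the cross terms), which the paper leaves implicit; this is a harmless refinement rather than a different argument.
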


\begin{proof}
    Note that the functionals $\area(\cdot)$ and $\poi(\cdot)$ as well as $f \mapsto |f'(1)|^2$ can be thought of as  "$L^2$ energies" of $f'$ with respective to measures $|dw|^2$, $P(w)|dw|^2$ and $\delta_1$ respectively. Thus, it is somewhat straightforward to compute their first variations. For instance, 
    we have that \begin{align*}
        \left.\frac{d}{dt}\area(f+tg)\right|_{t=0} &= \left.\frac{d}{dt}\int_\D |f'(w)+tg'(w)|^2|dw|^2\right|_{t=0}\\
        &=\left.\frac{d}{dt}\int_\D \left(2t\re (\overline{f'(w)}g'(w))+t^2|g'(w)|^2\right)|dw|^2\right|_{t=0}\\
        &= 2\int_{\D} \re(\overline{f'(w)}g'(w))|dw|^2.
    \end{align*}  The cases for $\poi(\cdot)$ and $f\mapsto |f'(1)|^2$ follow similarly.
\end{proof}
The functional $\len(\cdot)$ is not an $L^2$ energy for $f'$, we defer the computation of its first variation to Section~\ref{perturbation}. 

\subsection{Structure of the proofs} 
We have converted the optimization problems of $\mathbb{L}_i$ over convex domains to ones of $L_i$ over conformal maps. The purpose of this short section is to explain the different steps that underlie our proof of Theorem~\ref{theo-1} and Theorem~\ref{theo}.

\textit{Proof of Theorem~\ref{theo-1}}. 
In Section~\ref{goingback}, we will present Lemma~\ref{corners} which shows that if $f(\partial \D)$ has a smooth  corner at $f(e^{i\theta_0})$ with angle $\pi\alpha$, then we have \begin{equation*}
    |f'(z)| \asymp |z-e^{i\theta_0}|^{\alpha-1}.
\end{equation*} 
Inspired by this fact, we aim to show in Section~\ref{dom_res} that the optimizer $f$ of $L_1$ satisfies that $f'\in L^2_{\mathrm{loc}}(\partial\D \backslash\set{1})$ (Proposition~\ref{ineq}) and thus $f(\partial \D)$ cannot have smooth acute corners away from $f(1)$. To show this, 
\begin{enumerate}[({1.}1)]
    \item we will first construct an increasing family of disks $\set{D_a}_{a\in (0,1]}$ in $\D$ (see Figure~\ref{fig:circl}). These disks correspond to the level sets of the Poisson kernel $P$.
    \item Using domain restriction (Lemma~\ref{circles}), we will then show that for a conformal map $f:\D\to \Omega$ onto a convex domain, $\set{f|_{D_a}}$ corresponds to an increasing family of convex subsets in $\Omega$.
    \item We then compute the change of $L_1$ over $\set{f|_{D_a}}$ for the maximizing conformal map $f$ of $L_1$ (Proposition~\ref{ineq}). Because of the relation between $\set{D_a}$ and $P$, the change of $\poi(f)$ can be computed explicitly along with that of $\area(f)$.
    \item Since $f$ is the maximizer of $L_1$, the value for $L_1$ should increase as $\set{D_a}$ increase. This would imply the local square integrability of $f'$ (Proposition~\ref{ineq}).
\end{enumerate}

\textit{Proof of Theorem~\ref{theo}}. We will show that for the optimizer $f$ of either $L_1$ or $L_2$, $f(\partial \D)$ has many "almost zero-curvature" points around $f(1)$ (Proposition~\ref{theo3}) in Section~\ref{curv_sign}. To show this,
\begin{enumerate}[({2.}1)] 
    \item  we will define \textit{conformal curvature}, which generalizes the notion of curvature to non-$C^{2+\alpha}$ domains (Proposition~\ref{coinci}).
    \item We will develop a way to apply an Euler-Lagrange argument to the optimizer $f$ with a noise function $g$. We will show how to ensure the nonnegativity of the curvature signature when perturbing $f$. In particular, we will show that by setting the noise function to be $g=\int_{[0,z]}f'h$, we can relate the change of the curvature signature to the outward pointing normal vector of the domain $h(\D)$ (Proposition~\ref{perbu}).
    \item We will then construct a specific class $\set{h_b}$ (Lemma~\ref{nice_fam}). We will prove that if the curvature signature of $f(\D)$ were positive near $f(0)$, then we could use $\set{h_b}$ to perturb $f$.
    \item We will then prove that if we could use $\set{h_b}$ to perturb $f$, then the Euler-Lagrange argument applied to $L_i(f)$ would imply that $f$ is not the maximizer. Thus we can deduce that the curvature signature $k_f$ will have zero points accumulated at $0$ (Proposition~\ref{main}).
    \item This will imply that $f(\partial \D)$ has many zero conformal curvature points around $f(1)$ (Lemma~\ref{rela}).
\end{enumerate}

\textit{Translating the conformal functional $L_i$ to $\mathbb{L}_i$}. So far, we have only worked with the optimal conformal map for $L_i$ instead of the optimal domain for $\mathbb{L}_i$. In Section~\ref{goingback}, 
\begin{enumerate}[({3.}1)] 
    \item we will show that $L_i$ and $\mathbb{L}_i$ attain the same supremum (Theorem~\ref{equiv}). 
    \item Using that, we will then show that if $f$ maximizes $L_i$ and is $C^{1+\alpha}$ near $1$, then $f(\D)$ is in fact the optimal domain for $\mathbb{L}_i$ with $f(1)$ being an extremal point (Section~\ref{proof1}).
    \item  Finally, we will show that if $f$ is $C^{2+\alpha}$ near $1$, then the points with zero conformal curvature on $f(\partial\D)$ near $1$ are exactly the ones with zero Euclidean curvature, proving Theorem~\ref{theo} (Section~\ref{proof2}).
\end{enumerate}

\section{Domain Restriction}\label{dom_res}
\subsection{Analytic Domains}
For a domain $\Omega$, we denote the set of analytic functions on $\Omega$ by $\hol(\Omega)$ and let $\hol(\overline{\Omega})$ denote the set of functions in $\hol(\Omega)$ that can be extended analytically to an open set containing $\overline{\Omega}$. If one (therefore all) Riemann maps of $\Omega$ are in $\hol(\overline{\D})$, then $\partial\Omega$ is a real analytic loop (thus $\Omega$ is a real analytic domain).
Recall that the family of all Riemann maps of bounded convex domain can be characterized as \begin{equation}
    \mathbf{F}:= \set{f \in \hol(\D)|~ f'(z) \neq 0 ~\text{and}~ v_f(z) >0~\text{on $\D$}, f'\in L^2(\D)}.
\end{equation} 
We also recall that for $L_i$ to be well-defined, we would like to consider the following subfamily of $\mathbf{F}$:\begin{equation*}
\mathbf{F_0} = \set{f\in \mathbf{F}: \poi(f) < \infty,~f~\text{has nontangential limit at}~1}.
\end{equation*} Note that Lemma~\ref{increase} implies $f'(1)\neq 0$ whenever it is defined. Then $\mathbf{F_0}$ is the family of all functions in $\mathbf{F}$ where $L_1$ and $L_2$ are well-defined in $(0,\infty)$. Using domain restriction with Lemma~\ref{coinci}, we can approximate $L_i(f)$ using functions in $\hol(\overline{\D})$:

\begin{lemma}\label{approx}
    For any $f\in \mathbf{F_0}$, there exists a sequence of functions $\set{f_n}\subseteq \mathbf{F}_0\cap \hol(\overline{\D})$ such that \begin{enumerate}
        \item The sequence $\set{f_n}$ converges to $f$  normally (uniformly on compact sets).
        \item The sequence $\set{f_n(\D)}$ is an increasing family of domains such that \begin{equation*}
            \bigcup^\infty_{n=1}f_n(\D) = f(\D).
        \end{equation*}
        \item The perimeters of $\set{f_n(\D)}$ also increase to that of $f(\D)$.
        \item Let $u_n$ be the torsion function for domain $\Omega_n := f_n(\D)$. Then \begin{align*}
            \mathbb{L}_1(\Omega_n)\geq &\frac{\left|\frac{\partial u_n}{\partial \mathbf{n}}(f_n(1))\right|}{|\Omega_n|^{1/2}} =L_1(f_n)\to L_1(f),\\
           \mathbb{L}_2(\Omega_n)\geq &\frac{|\frac{\partial u_n}{\partial \mathbf{n}}(f_n(1))|}{\mathcal{H}^1(\partial \Omega_n)} =L_2(f_n)\to L_2(f).
        \end{align*} 
We note that if $f_n(1)$ is an extremal point of $\Omega_n$, then the previous inequalities are equations. 
    \end{enumerate}
\end{lemma}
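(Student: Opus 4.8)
The plan is to produce the approximating sequence $f_n$ by the domain-restriction construction already developed: take a sequence of disks $D_n \subset \D$ increasing to $\D$, each passing through the boundary point $1$, and set $f_n$ to be $f$ precomposed with a Möbius map of $\D$ onto $D_n$. Concretely, for $r_n \uparrow 1$ let $\phi_n \in \mathrm{Aut}(\D)$ be the map (constructed as in the proof of Corollary~\ref{circles}, via the conjugation $\varphi^{-1}\circ\varphi_0\circ\varphi$) such that $\phi_n(\D) = D_n$ where $D_n$ is the disk internally tangent to $\partial\D$ at $1$ and meeting the real axis at $-r_n$. Then $f_n := f\circ\phi_n$. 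First I would check $f_n \in \mathbf{F}_0\cap\hol(\overline{\D})$: since $\overline{D_n}\subset\D$ (except for the single contact point $1$), $f$ is analytic on a neighborhood of $\overline{D_n}\setminus\{1\}$, and a short argument with the contact point — $D_n$ touches $\partial\D$ only at $1$, where $f$ already has a nontangential limit by $f\in\mathbf{F}_0$ — shows $f_n$ extends continuously to $\overline\D$ with $f_n(1)=f(1)$; analyticity on $\overline{\D}$ holds because $\overline{D_n}$ minus its single boundary point is compactly contained in $\D$ and one pushes the tangency point slightly inward as in the ``$\partial B$ touches $\partial \D$'' case of Corollary~\ref{circles}. (If one prefers strict real-analyticity one replaces $D_n$ by a slightly smaller disk $D_n - t_n$ with $t_n\to 0$ chosen so all conclusions below still hold; $f_n$ is then genuinely in $\hol(\overline\D)$.) Convexity of $f_n(\D)=f(D_n)$ is exactly Corollary~\ref{circles}, so $f_n\in\mathbf F$; finiteness of $\poi(f_n)$ and existence of a nontangential limit at $1$ are automatic since $f_n\in\hol(\overline\D)$ (or at worst inherited by a change of variables from $f\in\mathbf F_0$).

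Next I would verify items (1)--(3). Normal convergence $f_n\to f$ follows because $\phi_n\to\mathrm{id}$ normally on $\D$ (the disks $D_n$ exhaust $\D$, so the $\phi_n$ converge locally uniformly to the identity) and $f$ is continuous. Monotonicity $f_n(\D)\subset f_{n+1}(\D)$ holds because $D_n\subset D_{n+1}$, and $\bigcup_n f_n(\D) = f\big(\bigcup_n D_n\big) = f(\D)$. For (3), $\mathcal H^1(\partial f_n(\D)) = \mathrm{len}(f_n) = \int_{\partial D_n}|f'(\zeta)|\,|d\zeta|$; since $f\in\mathbf F$ is a convex map its boundary is rectifiable so $f'\in \mathbf H^1$ (Lemma~\ref{wfm}), and one gets $\mathcal H^1(\partial f(D_n))\uparrow \mathcal H^1(\partial f(\D))$ either from monotone convergence applied along the exhausting disks together with lower semicontinuity of perimeter, or more cleanly from the nested convexity: the perimeters of an increasing family of convex subsets of a bounded convex set increase to the perimeter of the union (a standard fact for convex bodies, and $\partial f(\D)$ is rectifiable).

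For item (4), the inequalities $\mathbb L_i(\Omega_n)\ge (\text{value at }f_n(1))$ are immediate from the definition of $\mathbb L_i$ as a supremum of $|\partial u_n/\partial\mathbf n|$ over $\partial\Omega_n$ divided by the geometric normalization — $f_n(1)$ is one particular boundary point, and it is an extremal point precisely when equality holds, giving the final sentence. Since each $\Omega_n = f_n(\D)$ is a convex $\hol(\overline\D)$-domain (hence $C^{1+\alpha}$, indeed real-analytic, after the optional inward perturbation), Proposition~\ref{verysmooth} applies and gives $|\partial u_n/\partial\mathbf n(f_n(1))|/|\Omega_n|^{1/2} = L_1(f_n)$ and likewise for $L_2$. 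It then remains to show $L_i(f_n)\to L_i(f)$, i.e.\ $\mathrm{area}(f_n)\to\mathrm{area}(f)$, $\mathrm{len}(f_n)\to\mathrm{len}(f)$, $|f_n'(1)|\to|f'(1)|$, and $\mathrm{poi}(f_n)\to\mathrm{poi}(f)$. The first is $|f(D_n)|\uparrow|f(\D)|$ (monotone convergence); the second is item (3); the third follows from $f_n'(1) = f'(1)\phi_n'(1)$ together with $\phi_n'(1)\to 1$ (the tangency of $D_n$ at $1$ is designed so the derivative of the automorphism at the fixed point $1$ tends to $1$ — this is where the precise choice of $D_n$ matters and where I expect the main technical work). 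For $\mathrm{poi}$, changing variables $w=\phi_n(\omega)$ turns $\mathrm{poi}(f_n) = \int_\D |f'(\phi_n(\omega))|^2|\phi_n'(\omega)|^2 P(\omega)\,|d\omega|^2$; since $\phi_n(1)=1$ one checks $P(\omega) = P(\phi_n(\omega))\,|\phi_n'(\omega)|\cdot c_n$ up to the Möbius Jacobian bookkeeping, so the integrand is comparable to $|f'(w)|^2 P(w)$ restricted to $D_n$, and dominated convergence (with dominating function $|f'|^2 P \in L^1(\D)$ since $f\in\mathbf F_0$) gives $\mathrm{poi}(f_n)\to\mathrm{poi}(f)$. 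The main obstacle is precisely this last convergence: one must track how the Poisson kernel $P$ transforms under the automorphisms $\phi_n$ fixing the boundary point $1$, and confirm that the transformed kernels are bounded above by a fixed $L^1$ majorant; here the conformal covariance of the Poisson kernel under $\mathrm{Aut}(\D)$, combined with $\phi_n'(1)\to1$, is what makes the dominated-convergence argument go through.
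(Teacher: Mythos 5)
Your construction is genuinely different from the paper's, and the difference is where the trouble lies. The paper simply takes the dilations $f_r(z)=f(rz)$, $r_n=1-1/n$: membership in $\hol(\overline{\D})$ is then automatic, Lemma~\ref{increase} gives $r^2|f'(rz)|^2\uparrow|f'(z)|^2$ pointwise, so $\area$, $\poi$ and $\len$ all converge by monotone convergence \emph{without any change of variables or kernel comparison}, and $|f_r'(1)|=r|f'(r)|\to|f'(1)|$ because the radial limit is a nontangential limit; the identity in item (4) at the (not necessarily extremal) point $f_r(1)$ comes from the proof of Lemma~\ref{function}, not from Proposition~\ref{verysmooth}, which is stated only for extremal points. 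Your tangent-disk family is essentially the paper's family $D_a=B_a(1-a)$, $f_a(z)=f(az+(1-a))$, which the paper does use --- but in Lemma~\ref{der} and Proposition~\ref{ineq}, for a different purpose, precisely because for that family one has the exact affine identity $P\bigl(\tfrac{w-(1-a)}{a}\bigr)=aP(w)-(1-a)$.

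The genuine gap is the tension between the $\hol(\overline{\D})$ requirement and your convergence argument, and it is not resolved in the proposal. With the tangent disks, $f_n=f\circ\phi_n$ is analytic near $1$ only if $f$ is, so in general $f_n\notin\hol(\overline{\D})$; your sentence asserting analyticity ``because one pushes the tangency point slightly inward'' conflates the construction with its fix, and the fix is not optional --- it is part of the statement being proved. Once you do shift the disk inward, $\phi_n(1)\neq 1$, so the ingredients you lean on ($f_n(1)=f(1)$, $f_n'(1)=f'(1)\phi_n'(1)$, and a Poisson-kernel covariance ``with $\phi_n(1)=1$'') no longer apply: the pulled-back kernel $P(\phi_n^{-1}(w))$ now has its pole at the interior point $\phi_n(1)$, and the pointwise bound $P(\phi_n^{-1}(w))\leq C\,(P(w)+1)$ needed to feed your dominated-convergence argument fails near that point (along the real axis toward $\phi_n(1)$ the ratio is of order $(t_n+\epsilon)/\epsilon\to\infty$), so the step $\poi(f_n)\to\poi(f)$ --- which you yourself flag as the main obstacle --- is not established. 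Moreover the asserted relation $P(\omega)=P(\phi_n(\omega))\,|\phi_n'(\omega)|\,c_n$ is not correct; the correct relation in the tangent case is the affine one above, which does give the majorant $|f'|^2P\in L^1(\D)$, but only in exactly the case where analyticity on $\overline{\D}$ is lost. The clean repair is to drop the tangency altogether and use the dilations $f(rz)$, where monotonicity (Lemma~\ref{increase}) does all the work; your remaining items (1)--(3) and the convex-geometry argument for the perimeters are fine as stated.
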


\begin{proof}
    Given $r\in (0,1]$, we define $f_r(z) = f(rz)$. Then Lemma~\ref{convex} implies $f_r\in \mathbf{F} \cap \hol(\overline{\D})$ for all $r$ and Lemma~\ref{increase} implies that 
     for all $ z\in\D,\quad |f'_r(z)|^2 = r^2|f'(rz)|^2 $ increases to $|f'(z)|^2$ as $r\to 1$.
 Note that $\poi(f)$, $\area(f)$ and $\len(f)$ are integrals of $|f'|^2$. Thus by the monotone convergence theorem, we have 
 $
        \poi(f_r) \uparrow \poi(f)$ as well as $\area(f_r)\uparrow \area(f)$ and $\len(f_r)\uparrow \len(f)$,
as $r$ tends to $1$. Since for $r \in (0,1)$, $f'(r)$ tends to the nontangential limit $f'(1)$, we also have that $ |f'_r(1)| \uparrow |f'(1)|$. Therefore, we have $L_1(f_r) \to L_1(f)$ and $L_2(f_r)\to L_2(f)$.
    However, for $r<1$, $f_r$ is analytic on $\overline{\D}$ (in particular it is $C^\infty(\overline{\D})$). The proof of Lemma~\ref{function} implies that \begin{equation*}
        \left|\frac{\partial u_r}{\partial \mathbf{n}}(f_r(1))\right| =\frac{\int_\D |f_r'(w)|^2P(w)|dw|^2}{2\pi|f'_r(1)|} = \frac{\poi(f_r)}{2\pi|f'_r(1)|},
    \end{equation*}
    where $u_r$ is the torsion function for $f(r\D)$. Thus the result follows by taking the sequence with $r_n = 1-1/n$.
\end{proof}

We know from Proposition~\ref{verysmooth} and Part~(4) of Lemma~\ref{approx} that 
\begin{corollary}\label{hol}
    We have \begin{align*}
        \sup_{f\in\mathbf{F}_0} L_1(f)&=  \sup_{\emph{\tiny $\Omega$~convex, bounded, analytic}}\mathbb{L}_1(\Omega) \leq \frac{1}{\sqrt{2\pi}},\\
        \sup_{f\in\mathbf{F}_0} L_2(f) &= \sup_{\emph{\tiny $\Omega$~convex, bounded, analytic}}\mathbb{L}_2(\Omega) \leq \frac{1}{\sqrt{8}\pi}.
    \end{align*}
\end{corollary}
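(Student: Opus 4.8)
The statement is a bookkeeping consequence of Proposition~\ref{verysmooth} and Part~(4) of Lemma~\ref{approx}; the plan is to verify the two inequalities that together give equality of the suprema, and then quote the classical estimates from the introduction for the numerical bounds.

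First I would show $\sup_{f\in\mathbf{F}_0}L_1(f)\le \sup_{\Omega}\mathbb{L}_1(\Omega)$, the supremum on the right being taken over bounded convex analytic domains. Fix $f\in\mathbf{F}_0$ and let $\{f_n\}\subseteq \mathbf{F}_0\cap\hol(\overline{\D})$ be the sequence furnished by Lemma~\ref{approx}, so that $\Omega_n:=f_n(\D)$ is a bounded convex analytic domain, $\mathbb{L}_1(\Omega_n)\ge L_1(f_n)$, and $L_1(f_n)\to L_1(f)$ (and the same statements hold for $L_2$). Then $L_1(f)=\lim_n L_1(f_n)\le \sup_n \mathbb{L}_1(\Omega_n)\le \sup_{\Omega}\mathbb{L}_1(\Omega)$, and taking the supremum over $f$ gives the claimed inequality; the argument for $L_2$ is identical.

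For the reverse inequality, let $\Omega$ be a bounded convex analytic domain with Riemann map $f$. Then $f\in\hol(\overline{\D})$ with $f'\neq 0$ on $\overline{\D}$; since $P\in L^1(\D)$ and $f'$ is bounded on $\overline{\D}$ we get $\poi(f)<\infty$, and $f'$ is continuous at $1$, so $f\in\mathbf{F}_0$. Replacing $f(z)$ by $f(e^{i\theta}z)$ for a suitable $\theta$ we may assume that $f(1)$ is one of the extremal points of $\Omega$; then Proposition~\ref{verysmooth} gives $L_1(f)=\mathbb{L}_1(\Omega)$ and $L_2(f)=\mathbb{L}_2(\Omega)$. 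Taking suprema yields $\sup_{\Omega}\mathbb{L}_i(\Omega)\le \sup_{f\in\mathbf{F}_0}L_i(f)$, hence equality. Finally, the numerical bounds $\mathbb{L}_1(\Omega)\le 1/\sqrt{2\pi}$ and $\mathbb{L}_2(\Omega)\le 1/(\sqrt{8}\pi)$ hold for all bounded convex $\Omega$ — the first by Sperb's inequality $\|\nabla u\|_{L^\infty}^2\le 2\|u\|_{L^\infty}$ combined with P\'olya's bound $\|u\|_{L^\infty}\le |\Omega|/(4\pi)$, the second by additionally invoking the isoperimetric inequality — and in particular for analytic $\Omega$.

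The only point that requires care is the direction of the inequality supplied by Lemma~\ref{approx}(4): it only gives $\mathbb{L}_i(\Omega_n)\ge L_i(f_n)$, since $f_n(1)$ need not be an extremal point of $\Omega_n$. This is precisely the direction needed in the first step, while in the second step one uses the freedom to rotate $\D$ so that $1$ does land on an extremal point, where Proposition~\ref{verysmooth} applies as an equality. There is no genuine obstacle beyond keeping these two directions straight.
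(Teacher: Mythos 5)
Your proposal is correct and follows the paper's intended route: the paper derives Corollary~\ref{hol} exactly as an immediate consequence of Lemma~\ref{approx}(4) (giving $\sup_{f\in\mathbf{F}_0}L_i\le\sup_{\Omega}\mathbb{L}_i$ over analytic domains) together with Proposition~\ref{verysmooth} applied to a Riemann map rotated so that $1$ hits an extremal point (giving the reverse), with the numerical bounds quoted from the introduction. Your handling of the one-sided inequality in Lemma~\ref{approx}(4) and the verification that the rotated map lies in $\mathbf{F}_0$ are exactly the details the paper leaves implicit.
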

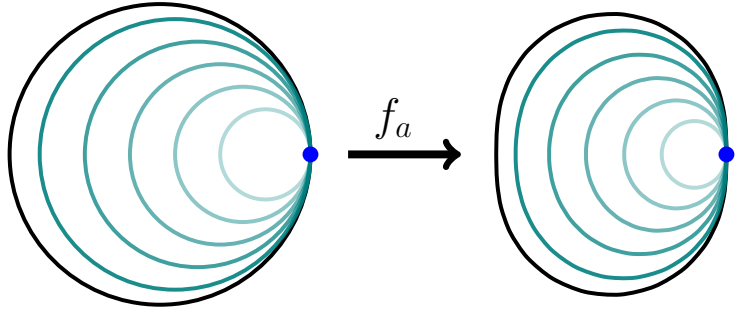
\begin{figure}[ht]
    \centering
    \begin{tikzpicture}\draw [line width=1.5] (-6, 0) circle (2) ;
\draw[domain=0:360,smooth,samples=30,variable=\t, line width=1.5, draw opacity=1]
  plot({1.7*(cos(\t)-0.1*cos(3*\t))},{1.7*(sin(\t)-0.1*sin(3*\t))});
  \cf{0.3}
    \cf{0.45}
    \cf{0.6}
    \cf{0.75}
    \cf{0.9}
    \fill [blue] (-4,0) circle (3pt);
    \fill [blue] ({1.7*0.9},0) circle (3pt);
    \draw [->,line width=3pt] (-3.5,0) -- (-2,0) node[pos=0.4,above=1pt] {\huge $f_a$};
    \end{tikzpicture}
    \caption{Increasing family of disks $\set{D_a}$ and corresponding family of convex domains $\set{f(D_a)}$}
    \label{fig:circl}
\end{figure}

\subsection{Family of Shrinking Disks}

Now let us consider the following family of disks inside $\D$: \begin{equation*}
    D_a = B_a(1-a), \quad a\in(0,1].
\end{equation*}
As we will see soon, these are the level sets of Poisson kernel with $\partial D_a = P^{-1}(\frac{1-a}{a})$. This is the property of this disk family that we want to leverage. For any $f\in\mathbf{F}_0$, $\set{D_a}$ give us a family of conformal maps: \begin{align}\label{circle_fam}
    f_a : &\D \to f(D_a)\\
    &z\mapsto f(az+(1-a)).\nonumber
\end{align} For each $a\in (0,1)$, $f_a$ is a conformal map of bounded convex domain by Lemma~\ref{circles}. It follows that for each $f\in\mathbf{F}_0$, $\set{f_a}$ are a family of maps in $\mathbf{F}$ with $f_1=f$. Additionally, each $f'_a(z) = af'(az+(1-a))$ has a nontangential limit $af'(1)$ at $1$. We want to analyze how $L_1(f_a)$ changes along $a$. It is clear that $|f'_a(1)| = a |f'(1)|$ and thus $\frac{d}{da}|f'_a(1)| = |f'(1)|$. 
For $\area(\cdot)$ and $\poi(\cdot)$, we have \begin{align*}
    \area(f_a) &=\int_\D |f'_a(w)|^2 |dw|^2 =  \int_\D a^2|f'(aw+(1-a))|^2 |dw|^2 = \int_{D_a} |f'(w)|^2 |dw|^2
    \end{align*}
    and
    \begin{align*}
    \poi(f_a) &=\int_\D |f'_a(w)|^2 P(w)|dw|^2 =  \int_\D a^2|f'(aw+(1-a))|^2 P(w)|dw|^2\\
    &= \int_{D_a} |f'(w)|^2P\left(\frac{w-(1-a)}{a}\right) |dw|^2.
\end{align*}

The following lemma gives the formula for the derivatives of $\area(f_a)$ and $\poi(f_a)$.

\begin{lemma}\label{der}
For $a\in (0,1)$, we have that \begin{equation*}
    \frac{d}{da} \area(f_a) = \int_{\partial D_a} |f'(\zeta)|^2\re\left(1-\frac{\zeta}{|\zeta|}\right) |d\zeta|
\end{equation*} and 

\begin{equation*}
    \liminf_{h\downarrow 0} \frac{\area(f_1) - \area(f_{1-h})}{h} \geq \int_{\partial\D} |f'(\zeta)|^2\re(1-\zeta)|d\zeta|.
\end{equation*}
For $a\in(0,1]$, we also have that \begin{equation*}
    \frac{d}{da} \poi(f_a) = \int_{D_a} |f'(w)|^2\left(P(w)+1\right) |dw|^2 >0,
\end{equation*} where we use the left derivative for $a=1$. In particular, $\poi(f_a)$ is increasing in $a$. Therefore, if $f\in \mathbf{F_0}$, the the family $\set{f_a}_{a\in(0,1]}$ is also in $\mathbf{F_0}$. 
\end{lemma}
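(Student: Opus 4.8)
All three derivatives are derivatives of integrals over the moving family of disks $\{D_a\}$, whose boundaries $\partial D_a=\{P=(1-a)/a\}$ are exactly the level sets of the Poisson kernel; I would compute each, read off the monotonicity of $\poi(f_a)$, and then verify $\{f_a\}\subseteq\mathbf{F}_0$. I would begin with $\poi$, which is the algebraically clean case. Combining $P(w)=\frac1{1-w}+\frac1{1-\bar w}-1$ with the elementary identity $1-\frac{w-(1-a)}{a}=\frac{1-w}{a}$ gives
$$Q_a(w):=P\!\left(\frac{w-(1-a)}{a}\right)=\frac{a}{1-w}+\frac{a}{1-\bar w}-1=a\bigl(P(w)+1\bigr)-1,$$
so $\poi(f_a)=\int_{D_a}|f'(w)|^2Q_a(w)\,|dw|^2$ has an integrand that is \emph{affine in $a$} and that \emph{vanishes on $\partial D_a$} (there $P=(1-a)/a$). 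Both features are the point: the boundary contribution to $\frac{d}{da}\poi(f_a)$ drops out and only $\partial_aQ_a$ survives, leaving $\int_{D_a}|f'(w)|^2(P(w)+1)\,|dw|^2$, which is positive since $P(w)+1=2\re\frac1{1-w}>0$ on $\D$; hence $\poi(f_a)$ is strictly increasing. To make this rigorous without differentiating under an integral on the non-precompact domain $D_a$, I would use difference quotients: for $a'<a$, $\poi(f_a)-\poi(f_{a'})=\int_{D_{a'}}|f'|^2(Q_a-Q_{a'})+\int_{D_a\setminus D_{a'}}|f'|^2Q_a$; the first term is $(a-a')\int_{D_{a'}}|f'|^2(P+1)$ by affineness, and the second, rewritten via the coarea formula for $P$ as $a\int_{c_a}^{c_{a'}}(c-c_a)J(c)\,dc$ with $c_a=(1-a)/a$ and $J(c)=\int_{\{P=c\}}|f'|^2|\nabla P|^{-1}\,d\mathcal H^1$, is bounded by $a(c_{a'}-c_a)\bigl(\area(f_a)-\area(f_{a'})\bigr)=o(a-a')$ because the prefactor $c-c_a$ vanishes at the left endpoint. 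Dividing by $a-a'$ and letting $a'\to a$, with $\int_{D_a}|f'|^2(P+1)\le\poi(f)+\area(f)<\infty$ to license the convergence, yields the formula; the same computation gives the left derivative at $a=1$.

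\textbf{The $\area$ derivative.} Here the integrand is fixed, so this is a pure moving-domain derivative. Writing $D_a=\{g_a\le0\}$ with $g_a(w)=|w-(1-a)|^2-a^2$, one has $\partial_a g_a=2(\re w-1)$ and $|\nabla g_a|=2a$ on $\partial D_a$, so the outward normal speed of $\partial D_a$ is $v_n=-\partial_a g_a/|\nabla g_a|=\re\frac{1-\zeta}{a}$, which equals $1-\cos\phi_\zeta$ on $\partial D_a$ (with $e^{i\phi_\zeta}=(\zeta-(1-a))/a$) and reduces to $\re(1-\zeta)$ on $\partial\D$; the transport formula then gives $\frac{d}{da}\area(f_a)=\int_{\partial D_a}|f'(\zeta)|^2 v_n\,|d\zeta|$. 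For rigor I would again go through difference quotients and the coarea formula with $P$, using $|\nabla P|=2|1-w|^{-2}$ and $|1-\zeta|^2=2a^2(1-\cos\phi_\zeta)$ on $\partial D_a$, so that $a^{-2}|\nabla P|^{-1}=1-\cos\phi_\zeta$. At $a=1$, however, $\partial D_1=\partial\D$ runs through the point $w=1$, where $f'$ may be unbounded and the boundary integral can be infinite, so only the stated $\liminf$ is available: writing $\frac1h\bigl(\area(f_1)-\area(f_{1-h})\bigr)=\frac1h\int_{1-h}^1\bigl(\int_{\partial D_a}|f'|^2v_n\bigr)da$, I would bound the inner integral from below by Fatou's lemma, using that for fixed $\phi\ne0$ the chord $(1-a)+ae^{i\phi}\to e^{i\phi}$ is a nontangential approach, so $|f'((1-a)+ae^{i\phi})|^2(1-\cos\phi)\to|f'(e^{i\phi})|^2(1-\cos\phi)$ for a.e.\ $\phi$; this produces the lower bound $\int_{\partial\D}|f'(\zeta)|^2\re(1-\zeta)\,|d\zeta|$.

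\textbf{Closure in $\mathbf{F}_0$, and the main difficulty.} Monotonicity of $\poi(f_a)$ together with $\poi(f_1)=\poi(f)<\infty$ forces $\poi(f_a)<\infty$ for all $a\in(0,1]$; combined with $\area(f_a)\le\area(f)<\infty$, $f_a'(z)=af'(az+(1-a))\ne0$, $v_{f_a}>0$ (Lemma~\ref{harmonic}, since $f_a(\D)=f(D_a)$ is convex by Lemma~\ref{circles}), and the nontangential limit $f_a'(z)\to af'(1)$ at $1$ (the map $z\mapsto az+(1-a)$ is a dilation centered at $1$, so it sends any Stolz angle at $1$ to a Stolz angle at $1$ contained in $D_a$, which is internally tangent to $\D$ at $1$ and hence inside a Stolz angle at $1$ in $\D$), this yields $f_a\in\mathbf{F}_0$. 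The genuinely delicate issue throughout is the behavior near $w=1$ as $a\uparrow1$: there $f'$ need not be bounded, $D_a$ is not relatively compact in $\D$, and the derivative of $\area$ may fail to be finite — which is precisely why the statement degrades to a one-sided $\liminf$. I expect the coarea-plus-Fatou argument at $a=1$, and verifying the dominating/convergence hypotheses near $w=1$, to be the main obstacle; away from $w=1$ everything is soft, since $f'$ is holomorphic on $\D$ and the disks vary smoothly.
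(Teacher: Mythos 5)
Your proposal is correct and follows essentially the same route as the paper: the affine identity $P\left(\frac{w-(1-a)}{a}\right)=a(P(w)+1)-1$ together with an $o(h)$ bound on the annulus $D_{a+h}\setminus D_{a}$ (where $P\le (1-a)/a$) for the $\poi$ derivative, an adapted parametrization/normal-speed computation for $\area(f_a)$ when $a<1$, Fatou's lemma plus a.e.\ nontangential convergence along chords for the liminf at $a=1$, and monotonicity of $\poi(f_a)$ (plus convexity of $f(D_a)$ and the nontangential limit of $f_a'$ at $1$) for the closure of the family in $\mathbf{F}_0$. The only discrepancy is in your favor: your integrand $\re\left(\frac{1-\zeta}{a}\right)=1-\cos\phi_\zeta$ is the correct one and matches the paper's own $\theta$-parametrized intermediate formula, whereas the displayed $\re\left(1-\zeta/|\zeta|\right)$ in the lemma coincides with it only at $a=1$ (an apparent misprint, harmless because only the $a=1$ inequality and the $\poi$ formula are used later).
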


\begin{proof}
Let us first look at $\area(f_a)$. Using the parametrization $w = re^{i\theta}+(1-r)$ where $r\in (0,1)$ and $\theta \in (-\pi,\pi)$, we have 
\begin{align*}
    \area(f_a) &= \int_{D_a} |f'(w)|^2|dw|^2 = \int_0^a \int_{-\pi}^\pi |f'(re^{i\theta}+(1-r))|^2(1-\cos \theta)rd\theta dr.
\end{align*}
For $a\in (0,1)$, we have $|f'|^2 \in C^{\infty}(\overline{D_a})$. It follows that \begin{align*}
    \frac{d}{da} \area(f_a) &= \int_{-\pi}^\pi |f'(ae^{i\theta}+(1-a))|^2(1-\cos \theta)ad\theta \\
    &= \int_{\partial D_a} |f'(w)|^2\re\left(1 - \frac{\zeta}{|\zeta|}\right)|d\zeta|.
\end{align*}
Note that $\area(f_a)$ is increasing, therefore if we set \begin{equation*}
    X = \liminf_{h\downarrow 0} \frac{\area(f_1) - \area(f_{1-h})}{h}, 
\end{equation*} we have \begin{align*}
    X= &\liminf_{h\downarrow0}\frac{ \int_{1-h}^1 \int_{-\pi}^\pi |f'(re^{i\theta}+(1-r))|^2(1-\cos \theta)rd\theta dr}{h}\\
    = &\liminf_{h\downarrow 0}\int_{-\pi}^\pi \frac{\int_{1-h}^1 |f'(re^{i\theta}+(1-r))|^2rdr}{h}(1-\cos \theta)d\theta,
\end{align*}
where the limit can be infinite. Since $f'$ has nontangential limits almost everywhere on $\partial\D$, it follows that \begin{equation*}
    \lim_{h\downarrow 0} \frac{\int_{1-h}^1 |f'(re^{i\theta}+(1-r))|^2rdr}{h} = |f'(e^{i\theta})|^2,
\end{equation*} for almost all $\theta\in (-\pi,\pi)$. Thus it follows from Fatou's lemma that \begin{align*}
    X \geq \int_{-\pi}^\pi |f'(e^{i\theta})|^2(1-\cos \theta)d\theta 
    = \int_{\partial\D} |f'(\zeta)|^2\re(1-\zeta)|d\zeta|.
\end{align*}

Now for $\poi(f_a)$, we note that for $0<h\leq 1-a$, \begin{align*}
    \poi(f_{a+h}) - \poi(f_a) &= \int_{D_{a+h}} |f'(w)|^2P\left(\frac{w-(1-a-h)}{a+h}\right) |dw|^2 \\
    - \int_{D_a}& |f'(w)|^2P\left(\frac{w-(1-a)}{a}\right) |dw|^2\\
    &=\int_{D_{a+h}\backslash D_a} |f'(w)|^2P\left(\frac{w-(1-a-h)}{a+h}\right) |dw|^2 \\
    + \int_{D_a}& |f'(w)|^2\left(P\left(\frac{w-(1-a-h)}{a+h}\right)-P\left(\frac{w-(1-a)}{a}\right)\right) |dw|^2.
\end{align*} 

A short computation shows that
$$ P\left(\frac{w-(1-a-h)}{a+h}\right)-P\left(\frac{w-(1-a)}{a}\right) = h(P(w)+1).$$

On the other hand, by change of variables, the integral
\begin{align*}
    Y = \int_{D_{a+h}\backslash D_a} |f'(w)|^2P\left(\frac{w-(1-a-h)}{a+h}\right) |dw|^2
    \end{align*}
    can be written as
    $$ Y = (a+h)^2 \int_{\D \backslash D_{\frac{a}{a+h}}}|f'((a+h)w+(1-a-h))|^2 P(w)|dw|^2.$$
  Note that $\partial D_a$ are level sets of $P(z)$ and $P \equiv (1-a)/a$ on $\partial D_a$ and, as a consequence, $P(w) \leq (1-a)/a$ for $w\in \D\backslash D_a$. 
Therefore, \begin{align*}
    Y &\leq (a+h)^2 \int_{\D \backslash D_{\frac{a}{a+h}}}|f'((a+h)w+(1-a-h))|^2 \frac{1-\frac{a}{a+h}}{\frac{a}{a+h}}|dw|^2\\
    =&\frac{(a+h)^2h}{a} \int_{\D \backslash D_{\frac{a}{a+h}}}|f'((a+h)w+(1-a-h))|^2 |dw|^2\\
    =&\frac{h}{a} \int_{D_{a+h}\backslash D_a}|f'(w)|^2|dw|^2 =\frac{h}{a} \int_{\D}|f'(w)|^2 \mathbf{1}_{D_{a+h}\backslash D_a}(w)|dw|^2,
\end{align*} which is $o(h)$ by the dominated convergence theorem. Therefore, it follows that $\frac{d}{da} \poi(f_a) = \int_{D_a} |f'(w)|^2\left(P(w)+1\right) |dw|^2$ for all $a\in (0,1]$. 
\end{proof}

\subsection{Local Square Integrability on $\partial\D$}

In this section, we will show using domain restriction that extremizers cannot have a very large derivative on $\partial \D$.

\begin{proposition}~\label{ineq}
    Suppose $f$ is a conformal map in $\mathbf{F}_0$ that maximizes $L_1$. Then we have that \begin{equation*}
        2(\area(f))^2 \geq \poi(f)\int_{\partial \D}|f'(w)|^2\re(1-w)|dw|.
    \end{equation*} In particular, we have $f'\in L^2_{\mathrm{loc}}(\partial \D \backslash \set{1})$.
\end{proposition}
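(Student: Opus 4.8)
The plan is to run a one–sided Euler–Lagrange argument along the domain–restriction family $f_a(z):=f\bigl(az+(1-a)\bigr)$, $a\in(0,1]$, built from the horodisks $D_a=B_a(1-a)$. I would first fix notation: write $P_0:=\poi(f)$, $A_0:=\area(f)$, $L_0:=|f'(1)|$, all of which are finite and strictly positive ($P_0>0$ since $|f'|^2P>0$ on $\D$, $A_0<\infty$ since $f\in\mathbf F$, and $0<L_0<\infty$ by Lemma~\ref{increase} and the existence of the nontangential limit). By Lemma~\ref{der} and the remark after it, each $f_a\in\mathbf F_0$ with $f_1=f$, and, putting $\mathcal P(a):=\poi(f_a)$, $\mathcal A(a):=\area(f_a)$, $\ell(a):=|f_a'(1)|=aL_0$, I would record the following consequences of Lemma~\ref{der}: $\mathcal P$ is $C^1$ on $(0,1)$ and has left derivative $\mathcal P'(1)=P_0+A_0$ at $a=1$; $\mathcal A$ is increasing with $\mathcal A(a)\uparrow A_0$ (monotone convergence, since $D_a\uparrow\D$), hence left–continuous at $1$; and, crucially,
\[
\liminf_{h\downarrow0}\frac{\mathcal A(1)-\mathcal A(1-h)}{h}\ \ge\ \int_{\partial\D}|f'(\zeta)|^2\re(1-\zeta)\,|d\zeta|\ =:\ A.
\]

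The maximality of $f$ would enter as follows. For every $a\in(0,1]$ one has $L_1(f_a)\le L_1(f)$, and since all quantities are strictly positive this squares to the equivalent statement $\mathcal P(a)^2\le c\,\ell(a)^2\mathcal A(a)$, where $c:=4\pi^2 L_1(f)^2=P_0^2/(L_0^2A_0)$, with equality at $a=1$. Thus $G(a):=\mathcal P(a)^2-c\,\ell(a)^2\mathcal A(a)$ satisfies $G\le0$ on $(0,1]$ with $G(1)=0$, so $a=1$ maximizes $G$ and therefore $\liminf_{h\downarrow0}\bigl(G(1)-G(1-h)\bigr)/h\ge0$. I would then expand $G(1)-G(1-h)$ as $h\downarrow0$. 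With $\delta(h):=\mathcal A(1)-\mathcal A(1-h)\ge0$ (so $\delta(h)\to0$), using $\ell(1-h)^2=L_0^2(1-h)^2$ and $\mathcal A(1-h)\to A_0$,
\[
\ell(1)^2\mathcal A(1)-\ell(1-h)^2\mathcal A(1-h)=L_0^2\bigl(\delta(h)+(2h-h^2)\mathcal A(1-h)\bigr)=L_0^2\bigl(\delta(h)+2hA_0+o(h)\bigr),
\]
while $\mathcal P(1)^2-\mathcal P(1-h)^2=\bigl(h\mathcal P'(1)+o(h)\bigr)\bigl(2P_0+o(1)\bigr)=2hP_0(P_0+A_0)+o(h)$. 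Since $cL_0^2=P_0^2/A_0$, the $P_0^2$–terms cancel and I would obtain
\[
G(1)-G(1-h)=2hP_0A_0-\frac{P_0^2}{A_0}\,\delta(h)+o(h).
\]

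Dividing by $h>0$ and using $G(1)-G(1-h)\ge0$ gives $\delta(h)/h\le 2A_0^2/P_0+o(1)$, hence $\liminf_{h\downarrow0}\delta(h)/h\le 2A_0^2/P_0$; combining this with the lower bound $\liminf_{h\downarrow0}\delta(h)/h\ge A$ from Lemma~\ref{der} yields $A\le 2A_0^2/P_0$, that is, $2(\area(f))^2\ge\poi(f)\int_{\partial\D}|f'(w)|^2\re(1-w)\,|dw|$, the asserted inequality. For the final claim, $\poi(f)>0$ and $\area(f)<\infty$ make the right–hand side finite, so with $\zeta=e^{i\theta}$ the inequality says $\int_{\partial\D}|f'(\zeta)|^2(1-\cos\theta)\,|d\zeta|<\infty$; on any closed subarc $\gamma\subset\partial\D\setminus\set{1}$ one has $1-\cos\theta\ge c_\gamma>0$, so $\int_\gamma|f'(\zeta)|^2|d\zeta|\le c_\gamma^{-1}\int_{\partial\D}|f'(\zeta)|^2(1-\cos\theta)|d\zeta|<\infty$, i.e. $f'\in L^2_{\mathrm{loc}}(\partial\D\setminus\set{1})$.

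The main obstacle — and the reason for carrying $\liminf$'s throughout rather than honest derivatives — is that $\mathcal A(a)=\area(f_a)$ need not be differentiable at the endpoint $a=1$; Lemma~\ref{der} only supplies a one–sided $\liminf$ lower bound for its difference quotient. This turns out to be exactly compatible with what maximality gives: because $\mathcal A$ enters $G$ with a negative sign, maximality of $G$ at $a=1$ yields an \emph{upper} bound on $\limsup_{h\downarrow0}\delta(h)/h$, which pairs correctly with the $\liminf$ lower bound from Lemma~\ref{der}; at no point does one need $\mathcal A'(1)$ to exist. The only remaining points are routine: checking that the $\delta(h)$–dependent cross terms in the expansion of $G$ are genuinely $o(h)$ (true since $\delta(h)\to0$), and that the expansion $\mathcal P(1)^2-\mathcal P(1-h)^2=2hP_0(P_0+A_0)+o(h)$ is legitimate, which is the $C^1$–in–$a$ statement and the value $\mathcal P'(1)=\poi(f)+\area(f)$ from Lemma~\ref{der}.
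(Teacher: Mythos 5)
Your proof is correct and follows essentially the same route as the paper: both run a one-sided Euler--Lagrange argument at $a=1$ along the same domain-restriction family $f_a(z)=f(az+(1-a))$, feed in exactly the two inputs of Lemma~\ref{der} (the left derivative $\poi(f)+\area(f)$ of $\poi(f_a)$ at $a=1$ and the Fatou lower bound for the $\area$ difference quotient), and conclude with the same boundedness-away-from-$1$ argument for $1-\cos\theta$. The only difference is cosmetic: the paper differentiates $\log L_1(f_a)$ while you clear denominators and expand the squared functional $G(a)=\mathcal P(a)^2-c\,\ell(a)^2\mathcal A(a)$, which is an equivalent reorganization of the same computation.
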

\begin{proof}
    Suppose $f$ maximizes $L_1$ in $\mathbf{F}_a$. Then for any $a\in(0,1)$ we have $L_1(f_a) \leq L_1(f)$. It follows that 
\begin{equation*}
    \limsup_{a\uparrow 1} \frac{\log L_1(f_1) - \log L_1(f_a)}{1-a} \geq 0.
\end{equation*} By Lemma~\ref{der}, we have 
\begin{align*}
    \limsup_{a\uparrow 1} \frac{\log L_1(f_1) - \log L_1(f_a)}{1-a} = &\frac{\area(f)}{\poi(f)} - \frac{\liminf_{h\downarrow 0} \frac{\area(f_1) - \area(f_{1-h})}{h}}{2\area(f)} \\
    \leq &\frac{\area(f)}{\poi(f)} - \frac{\int_{\partial\D}|f'(w)|^2\re(1-w)|dw|}{2\area(f)}
\end{align*}
Hence, \begin{equation*}
    \frac{\area(f)}{\poi(f)} - \frac{\int_{\partial \D}|f'(w)|^2\re(1-w)|dw|}{2\area(f)}  \geq 0.
\end{equation*} The result then follows.
\end{proof}

\section{Curvature Signature}\label{curv_sign}
\subsection{Conformal Curvature} 
The purpose of this section is to recall some facts about the curvature of the domain for the convenience of the reader and to introduce a new geometric notion that we call \textit{conformal curvature}. We are not aware of this notion existing in the literature, however, it is sufficiently natural that it may well appear somewhere. We give a self-contained exposition of the relevant properties. Let $\Omega$ be a Jordan domain and $f:\D\to \Omega$ be one of its Riemann maps. Recall that we set
\begin{align*}
    v_f(z) = \re\left(1+z\frac{f''(z)}{f'(z)}\right) \qquad \mbox{and} \qquad
    k_f(\theta) = \liminf_{\D\ni z\to e^{i\theta}} v_f(z).
\end{align*} The function $v_f:\D\to \R$ is a harmonic function for locally univalent $f$ and curvature signature $k_f$ is lower semicontinuous. If $f\in C^2(\overline{\D})$, then $t \mapsto f(e^{it})$ is a proper parametrization of the $C^2$ curve $\partial\Omega$. Note that for a properly parametrized curve $\gamma(t) = (x(t),y(t))$ the (signed) Euclidean curvature is given by \begin{equation*}
 \kappa(t) =  \frac{x'(t)y''(t)-y'(t)x''(t)}{|x'(t)^2+y'(t)^2|^{3/2}}.
\end{equation*}
In our case, we have $x(t) = \re f(e^{it})$ and $y(t) = \im f(e^{it})$. By plugging them in, we have that the Euclidean curvature of $\partial\Omega$ at $f(e^{i\theta})$ is given by
\begin{equation}\label{curvatur}
        \frac{1}{|f'(e^{i\theta})|}\re\left(1+e^{i\theta}\frac{f''(e^{i\theta})}{f'(e^{i\theta})}\right) = \frac{v_f(e^{i\theta})}{|f'(e^{i\theta})|}.
\end{equation}
For a $C^{2+\alpha}$ domain, we have $f\in C^{2+\alpha}(\partial\D)$ for any Riemann map $f$ and thus the curvature on the boundary can be calculated using $\eqref{curvatur}$. For non-$C^{2+\alpha}$ domain, we cannot guarantee that $f$ is $C^2(\partial\D)$. Nevertheless, as we will see, it is natural to set the curvature of $f(\partial \D)$ at point $f(e^{i\theta})$ to be \begin{equation}\label{curvature}
    K_{f(\D)}(f(e^{i\theta})) = \liminf_{\D\ni z\to e^{i\theta}} \frac{v_f(z)}{|f'(z)|}.
\end{equation}

We call $K_\Omega(p)$ the \textit{conformal curvature} of point $p\in \partial\Omega$. It follows immediately from the definition that $K_\Omega$ is a lower semicontinuous function on $\partial \Omega$. One relation between the conformal curvature $K_\Omega$ and curvature signature $k_f$ is as follows:

\begin{lemma}\label{domination}
    Let $f$ be a conformal map onto a Jordan domain $\Omega$. Then there exists a constant $C_f\in(0,\infty)$ such that for all $\theta\in [-\pi,\pi]$, \begin{equation}\label{dom}
        K_{\Omega}(f(e^{i\theta})) \leq C_f k_f(\theta).
    \end{equation} 
\end{lemma}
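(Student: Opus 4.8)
\textbf{Proof plan for Lemma~\ref{domination}.}

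The plan is to compare the two quantities
\[
K_\Omega(f(e^{i\theta})) = \liminf_{\D\ni z\to e^{i\theta}} \frac{v_f(z)}{|f'(z)|}
\qquad\text{and}\qquad
k_f(\theta) = \liminf_{\D\ni z\to e^{i\theta}} v_f(z),
\]
and the only difference between them is the weight $1/|f'(z)|$. So the statement will follow at once if I can show that $|f'(z)|$ stays bounded away from $0$ as $z$ approaches the boundary --- more precisely, it is enough to produce a constant $\delta_f > 0$ with $|f'(z)| \ge \delta_f$ for all $z$ in a neighborhood of $\partial\D$ inside $\D$, since then $v_f(z)/|f'(z)| \le v_f(z)/\delta_f$ on that neighborhood, the $\liminf$ as $z \to e^{i\theta}$ is governed by that neighborhood, and $C_f := 1/\delta_f$ works. (If $v_f$ could be negative this inequality would fail, but $\Omega$ convex gives $v_f > 0$ on $\D$ by Lemma~\ref{harmonic}, so both sides are nonnegative and the comparison is legitimate; for a general Jordan domain one still has $k_f(\theta) \geq$ the relevant lower bound after noting $v_f$ is harmonic, but since the lemma is stated for convex maximizers in the applications I will use $v_f>0$.)

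First I would recall that $\Omega$ is convex, hence bounded, and that $f:\D\to\Omega$ is a conformal map onto a convex domain. By Lemma~\ref{increase}, $r\mapsto r|f'(re^{i\theta})|$ is increasing on $[0,1)$ for every $\theta$. In particular, for any fixed $r_0 \in (0,1)$ and any $r \ge r_0$,
\[
r|f'(re^{i\theta})| \;\ge\; r_0|f'(r_0 e^{i\theta})| \;\ge\; r_0 \min_{|w|=r_0} |f'(w)| \;=:\; m_0 > 0,
\]
where $m_0>0$ because $f'$ is continuous and nonvanishing on the compact circle $|w| = r_0 \subset \D$. Hence $|f'(z)| \ge m_0/|z| \ge m_0$ for all $z$ with $r_0 \le |z| < 1$. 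This is exactly the uniform lower bound I need: on the annulus $\{ r_0 \le |z| < 1\}$ we have $|f'(z)| \ge m_0$, so for every $\theta$,
\[
\frac{v_f(z)}{|f'(z)|} \le \frac{v_f(z)}{m_0} \qquad\text{for } z \text{ with } r_0 \le |z| < 1.
\]
Taking $\liminf$ as $\D \ni z \to e^{i\theta}$ (only the values of $z$ near $e^{i\theta}$, which automatically lie in this annulus, matter) yields $K_\Omega(f(e^{i\theta})) \le m_0^{-1} k_f(\theta)$, so the lemma holds with $C_f := 1/m_0 = 1/\bigl(r_0 \min_{|w|=r_0}|f'(w)|\bigr) \in (0,\infty)$.

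I do not expect any serious obstacle here; the one point requiring a little care is making sure the comparison of $\liminf$'s respects the direction of the inequality, which is fine because $v_f \ge 0$ (convexity) and $1/|f'|$ is being bounded \emph{above} on the relevant annulus. A secondary subtlety, only relevant if one wanted the lemma for arbitrary Jordan domains rather than convex ones, is that Lemma~\ref{increase} is a convexity statement; for a general bounded Jordan domain one would instead invoke the Koebe distortion theorem together with the boundedness of $\Omega$ to get $|f'(z)| \gtrsim \operatorname{dist}(f(z),\partial\Omega)/(1-|z|) \cdot (1-|z|)$, but since the intended applications in this paper concern convex maximizers, the argument above via Lemma~\ref{increase} is the cleanest route.
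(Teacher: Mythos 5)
Your argument is correct and is essentially the paper's own proof: both use Lemma~\ref{increase} to bound $|f'|$ from below on an annulus near $\partial\D$ by the (positive) minimum of $|f'|$ on an interior circle (the paper takes $r_0=1/2$, giving $C_f = 2/\inf_\theta|f'(e^{i\theta}/2)|$), and then compare the two $\liminf$'s using $v_f>0$. Your observation that the statement's ``Jordan domain'' phrasing is really being used only in the convex setting matches the paper, whose proof likewise invokes convexity through Lemma~\ref{increase}.
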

\begin{proof}
     We set \begin{equation*}
         C_f = \frac{2}{\inf_{\theta\in[-\pi,\pi]}\set{|f'(e^{i\theta}/2)|} }\in (0,\infty).
     \end{equation*}
     If $f$ is a Riemann map of a  convex domain, then Lemma~\ref{increase} implies that \begin{equation*}
         |f'(re^{i\theta})| \geq \frac{|f'(e^{i\theta}/2)|}{2r} \geq \frac{C_f^{-1}}{r}\quad \text{for all}\quad r\in (\frac{1}{2},1).
     \end{equation*}
     It follows that \begin{equation*}
         K_\Omega(f(e^{i\theta})) = \liminf_{\D\ni z\to e^{i\theta}} \frac{v_f(z)}{|f'(z)|} \leq C_f\liminf_{\D\ni z\to e^{i\theta}} \frac{v_f(z)}{|z|} =C_fk_f(\theta).
     \end{equation*}
\end{proof}

\label{perturbation}
\subsection{Conformal curvature is well-defined.}\label{well-defined} 
In this section, we will prove that conformal curvature is well-defined, it is an \textit{intrinsic} geometric object that is independent of the choice of the Riemann map $f$.
\begin{proposition}\label{coinci}
    Given any bounded convex domain $\Omega$, the conformal curvature $K_\Omega: \partial \Omega \to [0,\infty]$ is well-defined and independent of the choice of Riemann map $f:\D\to \Omega$. When $\Omega$ is a $C^{2+\alpha}$ domain, $K_\Omega$ coincides with the Euclidean curvature.
\end{proposition}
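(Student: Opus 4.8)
The statement to prove, Proposition~\ref{coinci}, has two parts: (i) the conformal curvature $K_\Omega$ does not depend on the chosen Riemann map, and (ii) when $\partial\Omega$ is $C^{2+\alpha}$, it agrees with the usual Euclidean curvature. I address each in turn.

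\textbf{Independence of the Riemann map.} Any two Riemann maps $f, \tilde f:\D\to\Omega$ differ by a Möbius automorphism of the disk: $\tilde f = f\circ\varphi$ with $\varphi\in\mathrm{Aut}(\D)$. The plan is to compute how the quantity $v_f(z)/|f'(z)|$ transforms under precomposition with $\varphi$ and check that the transformed expression, evaluated along a nontangential approach to a boundary point, has the same $\liminf$. Writing $g = f\circ\varphi$, the chain rule gives $g'(z) = f'(\varphi(z))\varphi'(z)$ and $g''(z) = f''(\varphi(z))\varphi'(z)^2 + f'(\varphi(z))\varphi''(z)$, so that
\begin{equation*}
  1 + z\frac{g''(z)}{g'(z)} = 1 + z\varphi'(z)\frac{f''(\varphi(z))}{f'(\varphi(z))} + z\frac{\varphi''(z)}{\varphi'(z)}.
\end{equation*}
Since $\varphi$ is a disk automorphism, it maps $\D$ onto itself (a convex domain), so by Lemma~\ref{harmonic} $\re(1 + z\varphi''(z)/\varphi'(z)) > 0$; moreover one computes this quantity explicitly for $\varphi(z) = e^{i\psi}(z-a)/(1-\bar a z)$ and finds it equals $(1-|a|^2)/|1-\bar a z|^2 > 0$, a positive harmonic function that extends continuously and positively to all of $\overline\D$ except possibly tending to a positive limit at each boundary point. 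Combining, $v_g(z) = \re(1+zg''/g') $ relates to $v_f(\varphi(z))$ via a bounded, bounded-away-from-zero positive factor near the boundary point in question; similarly $|g'(z)| = |f'(\varphi(z))|\cdot|\varphi'(z)|$ with $|\varphi'(z)|$ bounded above and below near a fixed boundary point. Taking the $\liminf$ as $z\to e^{i\theta}$ nontangentially, and using that $\varphi$ maps cones at $e^{i\theta}$ to cones at $\varphi(e^{i\theta})$ (a standard property of disk automorphisms), the positive factors converge to finite positive limits, so the $\liminf$ of $v_g/|g'|$ at $e^{i\theta}$ equals that of $v_f/|f'|$ at $\varphi(e^{i\theta})$, i.e. $K_\Omega(g(e^{i\theta})) = K_\Omega(f(e^{i\theta}))$ with $g(e^{i\theta}) = f(\varphi(e^{i\theta}))$ the same boundary point. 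This establishes well-definedness. I should double-check that the $\liminf$ of a product of a convergent positive sequence and another sequence equals the product of the limit with the $\liminf$ — true when one factor converges to a finite positive number.

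\textbf{Agreement with Euclidean curvature in the $C^{2+\alpha}$ case.} Here I would invoke Kellogg's theorem: if $\partial\Omega$ is $C^{2+\alpha}$ then $f\in C^{2+\alpha}(\overline\D)$ and $f'\neq 0$ on $\partial\D$, so $v_f$ extends continuously to $\overline\D$ and $|f'|$ is continuous and nonvanishing on $\partial\D$. Hence the $\liminf$ in \eqref{curvature} is an honest limit, equal to $v_f(e^{i\theta})/|f'(e^{i\theta})|$, which by the computation \eqref{curvatur} already carried out in the text is exactly the signed Euclidean curvature of $\partial\Omega$ at $f(e^{i\theta})$. For convex $\Omega$ this is nonnegative, consistent with the stated codomain $[0,\infty]$. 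This part is essentially a bookkeeping consequence of results already in hand.

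\textbf{Main obstacle.} The genuinely delicate point is the behavior of the auxiliary factors $\re(1+z\varphi''/\varphi')$ and $|\varphi'(z)|$ as $z$ approaches a \emph{boundary} point $e^{i\theta}$ — one must be sure these tend to strictly positive finite limits and that the nontangential approach is preserved under $\varphi$, so that no spurious vanishing or blow-up corrupts the $\liminf$. Because $\varphi$ is an explicit Möbius map this is checkable by direct computation, but it must be done carefully, particularly confirming that $\varphi$ sends nontangential cones at $e^{i\theta}$ to nontangential cones at $\varphi(e^{i\theta})$ (with a possibly different aperture, which is harmless since the $\liminf$ is over all apertures). A secondary subtlety is that the additive term $z\varphi''/\varphi'$ is complex, so I must track that its real part, not just its modulus, enters $v_g$ correctly; writing everything in terms of the explicit formula for $\varphi$ sidesteps any sign ambiguity.
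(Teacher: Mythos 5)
Your overall route is the same as the paper's: reduce well-definedness to invariance under disk automorphisms via the chain rule for the pre-Schwarzian, $g''/g' = (f''/f')\circ\varphi\cdot\varphi' + \varphi''/\varphi'$, control the explicit M\"obius quantities at the boundary, and, in the $C^{2+\alpha}$ case, use Kellogg's theorem so that the $\liminf$ in \eqref{curvature} is an honest limit equal to the expression \eqref{curvatur}; that second half is fine and matches the paper. The problem is in the first half, at exactly the step you compress into ``$v_g$ relates to $v_f(\varphi(z))$ via a bounded, bounded-away-from-zero positive factor.'' The chain rule does not give a multiplicative relation: it gives
\begin{equation*}
  v_g(z) \;=\; \re\!\left(z\varphi'(z)\,\frac{f''(\varphi(z))}{f'(\varphi(z))}\right) \;+\; \re\!\left(1+z\frac{\varphi''(z)}{\varphi'(z)}\right),
\end{equation*}
whereas $|\varphi'(z)|\,v_f(\varphi(z)) = |\varphi'(z)| + |\varphi'(z)|\re\bigl(\varphi(z)\,f''(\varphi(z))/f'(\varphi(z))\bigr)$. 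Besides the harmless discrepancy $\re(1+z\varphi''/\varphi') - |\varphi'(z)|\to 0$ (explicit; note the correct formula is $(1-|a|^2|z|^2)/|1-\bar a z|^2$, which only agrees with your $(1-|a|^2)/|1-\bar a z|^2$ on $\partial\D$), there remains the cross term
\begin{equation*}
  \re\Bigl(\bigl(z\varphi'(z)-\varphi(z)|\varphi'(z)|\bigr)\,\frac{f''(\varphi(z))}{f'(\varphi(z))}\Bigr),
  \qquad \frac{z\varphi'(z)}{|\varphi'(z)|}-\varphi(z)=e^{i\psi}\frac{a(1-|z|^2)}{1-\bar a z},
\end{equation*}
in which a factor of order $1-|z|^2$ multiplies $f''/f'$, and for a general bounded convex domain (only convexity is assumed, so corners are allowed) $f''/f'$ is unbounded, of order $(1-|\varphi(z)|^2)^{-1}$. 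Hence your principle ``liminf of a product with a convergent positive factor'' does not apply; showing that this term, after division by $|f'(\varphi(z))|$, does not disturb the boundary $\liminf$ is the actual content of the invariance statement, and your sketch never confronts it.

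Relatedly, your ``main obstacle'' paragraph mislocates the difficulty. The M\"obius quantities $\re(1+z\varphi''/\varphi')$ and $|\varphi'|$ are explicit and trivially positive with positive boundary limits, and the cone-preservation issue is moot: the paper defines $k_f$ and $K_\Omega$ with the \emph{unrestricted} liminf $\D\ni z\to e^{i\theta}$, not a nontangential one, so after an exact (or asymptotically exact) identification of $v_g/|g'|$ with $(v_f/|f'|)\circ\varphi$ the change of variables is just a homeomorphism of $\overline{\D}$ and no aperture bookkeeping is needed. The paper's proof (Lemma~\ref{rela}) proceeds by the two explicit replacements $\re(1+z\varphi''/\varphi')$ by $|\varphi'(z)|$ and $z\varphi'(z)/|\varphi'(z)|$ by $\varphi(z)$, i.e.\ it isolates precisely the cross term above; your plan can be completed along those lines, but as written the decisive estimate is missing and the flagged obstacles are not the real ones.
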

To prove Proposition~\ref{coinci}, it  suffices to show invariance under automorphisms.
\begin{lemma}\label{rela}
    Given an automorphism $\varphi$ of $\D$ and $g = f\circ \varphi$, we have \begin{equation*}
        \liminf_{\D\ni z\to \varphi^{-1}(e^{i\theta})} \frac{v_f(z)}{|f'(z)|} = \liminf_{\D\ni z\to e^{i\theta}} \frac{v_g(z)}{|g'(z)|}
    \end{equation*} for all $\theta$.
\end{lemma}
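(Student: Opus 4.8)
The plan is to push $g=f\circ\varphi$ through the operator $h\mapsto 1+z\,h''/h'$ and peel off a clean main term plus two Möbius corrections. Writing $w=\varphi(z)$, one has $g'(z)=f'(w)\varphi'(z)$ and $g''(z)/g'(z)=\varphi'(z)f''(w)/f'(w)+\varphi''(z)/\varphi'(z)$, which rearranges into the exact identity
\begin{equation*}
  1+z\frac{g''(z)}{g'(z)}=\Bigl(1+w\frac{f''(w)}{f'(w)}\Bigr)\Psi(z)+R(z),\qquad \Psi(z):=\frac{z\varphi'(z)}{\varphi(z)},\quad R(z):=1+z\frac{\varphi''(z)}{\varphi'(z)}-\Psi(z).
\end{equation*}
I would then record the boundary behaviour of $\Psi$ and $R$. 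Since $\varphi$ restricts to an orientation-preserving homeomorphism $e^{i\theta}\mapsto e^{i\beta(\theta)}$ of $\partial\D$, differentiating gives $\Psi(e^{i\theta})=\beta'(\theta)>0$; in particular $\Psi$ is real and positive on $\partial\D$ with $|\Psi(e^{i\theta})|=|\varphi'(e^{i\theta})|$. Because $\varphi(\partial\D)=\partial\D$ has curvature $1$, formula \eqref{curvatur} forces $\re\!\left(1+e^{i\theta}\varphi''(e^{i\theta})/\varphi'(e^{i\theta})\right)=|\varphi'(e^{i\theta})|=\Psi(e^{i\theta})$, so $\re R\equiv 0$ on $\partial\D$. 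As $\varphi$ is Möbius, $\Psi$ and $R$ are holomorphic in a collar of $\partial\D$ (the pole of $\Psi$ at $\varphi^{-1}(0)$ and the pole of $\varphi'$ lie off $\partial\D$), hence $\im\Psi$ and $\re R$ are harmonic there and vanish on $\partial\D$; thus $|\im\Psi(z)|,|\re R(z)|=O(1-|z|)$, while $|\varphi'(z)|\asymp|\varphi'(\xi)|$ and $1-|\varphi(z)|\asymp 1-|z|$ as $z\to\xi$.

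Fix $\xi\in\partial\D$ and put $\eta=\varphi(\xi)$. Dividing the identity by $|g'(z)|=|f'(w)||\varphi'(z)|$, writing $\Psi(z)=|\Psi(z)|e^{i\psi(z)}$ with $\psi(z)=\arg\Psi(z)$, and splitting $1+wf''/f'=v_f(w)+i\,s_f(w)$ where $s_f:=\im(1+wf''/f')$, one obtains
\begin{equation*}
  \frac{v_g(z)}{|g'(z)|}=\frac{|z|}{|\varphi(z)|}\cos\psi(z)\cdot\frac{v_f(\varphi(z))}{|f'(\varphi(z))|}-\frac{|z|}{|\varphi(z)|}\sin\psi(z)\cdot\frac{s_f(\varphi(z))}{|f'(\varphi(z))|}+\frac{\re R(z)}{|f'(\varphi(z))||\varphi'(z)|}.
\end{equation*}
As $z\to\xi$ the first factor $\to 1$; the last term is $O(1-|z|)\to0$ using the collar bound and the growth theorem for convex univalent functions ($|f'|\ge|f'(0)|/4$); and $|\sin\psi(z)|\lesssim|\im\Psi(z)|\lesssim 1-|z|\asymp 1-|\varphi(z)|$. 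Everything therefore reduces to showing
\begin{equation*}
  (\star)\qquad \frac{|s_f(w)|\,(1-|w|)}{|f'(w)|}\longrightarrow 0\quad\text{as }\D\ni w\to\eta,
\end{equation*}
for every conformal $f$ onto a bounded convex domain. Granting $(\star)$, the cross term also vanishes, so $v_g(z)/|g'(z)|=(1+o(1))\,v_f(\varphi(z))/|f'(\varphi(z))|+o(1)$; taking $\liminf$ as $z\to\xi$, and using that $\varphi$ is a homeomorphism of $\overline{\D}$ so that $w=\varphi(z)$ runs over exactly the approaches to $\eta$, gives $\liminf_{z\to\xi}v_g/|g'|=\liminf_{w\to\eta}v_f/|f'|$, which is the asserted identity (the $\varphi^{-1}$ in the statement is immaterial, being absorbed by relabelling, as $\varphi^{-1}$ is again an automorphism).

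For $(\star)$ I would use the Herglotz representation. By Lemma~\ref{harmonic}, $P_f:=1+wf''/f'$ is a Carathéodory function with $P_f(0)=1$, so $P_f(z)=\int_{\partial\D}\frac{\zeta+z}{\zeta-z}\,d\mu(\zeta)$ for a probability measure $\mu$ on $\partial\D$ (the curvature measure). Then $v_f$ is the Poisson integral of $\mu$, whose radial integral yields $\log|f'(re^{i\theta})|=\log|f'(0)|-2\int\log|1-re^{i(\theta-\phi)}|\,d\mu(\phi)$, while $|s_f(re^{i\theta})|\le 2\int|1-re^{i(\theta-\phi)}|^{-1}\,d\mu(\phi)$ since $|1-re^{ix}|\ge r|\sin x|$. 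Splitting $\mu$ into its restriction to a small fixed arc $A_\delta$ about $\eta$ and the rest, and using only $1-r\le|e^{i\phi}-re^{i\theta}|\le 2$ together with $|e^{i\phi}-re^{i\theta}|\le 2\delta$ on $A_\delta$ and $\ge\delta/2$ off it (for $w$ near $\eta$), one gets $\frac{|s_f(w)|(1-|w|)}{|f'(w)|}\le \frac{8}{|f'(0)|}(2\delta)^{2\mu(A_\delta)}\bigl(\mu(A_\delta)+O((1-|w|)/\delta)\bigr)$; letting $w\to\eta$ and then $\delta\to0$ kills the right side, both when $\eta$ is a corner ($\mu(\{\eta\})>0$, so $(2\delta)^{2\mu(A_\delta)}\to0$) and when it is not ($\mu(\{\eta\})=0$, so $\mu(A_\delta)\to0$ and $(2\delta)^{2\mu(A_\delta)}\le1$). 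I expect $(\star)$ to be the crux of the argument: a priori $|s_f|$ can be as large as $(1-|w|)^{-1}$, but precisely where it is large---near corners of $\partial\Omega$---the derivative $|f'|$ blows up faster, and the Herglotz bookkeeping is what makes this quantitative; the remainder of the proof is the algebraic transformation identity together with standard collar and distortion estimates.
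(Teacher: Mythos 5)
Your argument is correct, and at its core it follows the same route as the paper: differentiate $g=f\circ\varphi$, split off the term $1+z\varphi''/\varphi'$, use that the unit circle has curvature $1$ (via \eqref{curvatur}) to convert that term into $|\varphi'|$, and identify what remains with $v_f/|f'|$ at the corresponding boundary point $\varphi(\xi)$. The genuine difference is that you make rigorous a step the paper passes over quickly: the paper replaces $z\varphi'(z)/|\varphi'(z)|$ by $\varphi(z)$ inside the $\liminf$ on the sole grounds that their ratio tends to $1$, which is not by itself sufficient, since the factor it multiplies, $f''/f'$, may blow up like $(1-|w|)^{-1}$ while $|f'|$ is only bounded below; the product of an $O(1-|z|)$ error with such a factor is a priori only $O(1)$. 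Your decomposition isolates exactly this danger in the cross term involving $s_f=\im(1+wf''/f')$, and your estimate $(\star)$, proved through the Herglotz (curvature) measure of the Carath\'eodory function $1+wf''/f'$ guaranteed by Lemma~\ref{harmonic}, shows that $(1-|w|)\,|s_f(w)|/|f'(w)|\to 0$ at every boundary point, whether or not $\mu$ has an atom there; I checked the identity $\log|f'(w)|=\log|f'(0)|-2\int\log|1-w\bar\zeta|\,d\mu(\zeta)$, the bound $|s_f|\le 2\int|\zeta-w|^{-1}d\mu$, and the $\delta$-splitting, and they are all correct. So your proof uses convexity (Herglotz representation, $v_f\ge 0$, $|f'|\ge|f'(0)|/4$) where the paper's argument is formally stated for any locally univalent $f$, but since Proposition~\ref{coinci} only concerns bounded convex domains this costs nothing, and what it buys is a complete justification of the substitution. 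Finally, your remark about $\varphi$ versus $\varphi^{-1}$ is apt: what you (and, after its change of variables, the paper) actually prove is $\liminf_{z\to\xi}v_g/|g'|=\liminf_{w\to\varphi(\xi)}v_f/|f'|$, which is the geometrically correct correspondence $g(e^{i\theta})=f(\varphi(e^{i\theta}))$ and is exactly what well-definedness of the conformal curvature requires; the $\varphi^{-1}$ in the displayed statement is a labelling slip shared by the paper's own proof and is immaterial.
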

\begin{proof}
    Note that $g'(z)= f'(\varphi(z))\varphi'(z)$ and
    \begin{align*}
        g''(z) &= f''(\varphi(z))(\varphi'(z))^2+f'(\varphi(z))\varphi''(z).
    \end{align*} Therefore, we have \begin{align*}
        v_g(z) = \re \left(z\varphi'(z)\frac{f''(\varphi(z))}{f'(\varphi(z))}\right) + \re\left(1+z\frac{\varphi''(z)}{\varphi'(z)}\right).
    \end{align*} Since $\varphi$ is a Riemann map of $\D$ which has constant curvature $1$ on the boundary, we have \begin{align*}
        \lim_{\D\ni z\to e^{i\theta}}\frac{1}{|\varphi'(z)|}\re\left(1+z\frac{\varphi''(z)}{\varphi'(z)}\right)
        =\frac{1}{|\varphi'(e^{i\theta})|}\re\left(1+e^{i\theta}\frac{\varphi''(e^{i\theta})}{\varphi'(e^{i\theta})}\right)=1.
    \end{align*} It follows that \begin{align*}
        \liminf_{\D\ni z\to e^{i\theta}} \frac{v_g(z)}{|g'(z)|} &= \liminf_{\D\ni z\to e^{i\theta}}\frac{\re \left(z\varphi'(z)\frac{f''(\varphi(z))}{f'(\varphi(z))}\right)+|\varphi'(z)|}{|f'(\varphi(z))\varphi'(z)|}\\
        &=\liminf_{\D\ni z\to e^{i\theta}}\frac{1+\re \left(\frac{z\varphi'(z)}{|\varphi'(z)|}\frac{f''(\varphi(z))}{f'(\varphi(z))}\right)}{|f'(\varphi(z))|}.
    \end{align*} However, we have \begin{equation*}
        \lim_{\D \ni z\to e^{i\theta}} \frac{z\varphi'(z)}{\varphi(z)|\varphi'(z)|} =  \frac{e^{i\theta}\varphi'(e^{i\theta})}{\varphi(e^{i\theta})|\varphi'(e^{i\theta})|}= 1.
    \end{equation*} It follows that \begin{align*}
        \liminf_{\D\ni z\to e^{i\theta}} \frac{v_g(z)}{|g'(z)|} &= \liminf_{\D\ni z\to e^{i\theta}}\frac{1+\re \left(\varphi(z)\frac{f''(\varphi(z))}{f'(\varphi(z))}\right)}{|f'(\varphi(z))|}\\
        &= \liminf_{\varphi^{-1}(\D)\ni w\to \varphi^{-1}(e^{i\theta})}\frac{1+\re \left(w\frac{f''(w)}{f'(w)}\right)}{|f'(w)|}\\
        &= \liminf_{\D\ni w\to \varphi^{-1}(e^{i\theta})}\frac{1+\re \left(w\frac{f''(w)}{f'(w)}\right)}{|f'(w)|}.
    \end{align*}
\end{proof}

\subsection{A Euler-Lagrange argument}
In this section, we will show that following:
\begin{proposition}\label{perbu}
    Given $f\in \mathbf{F_0}$ and $h\in \hol(\D)\cap C^1(\overline{\D})$, suppose for all $t>0$ sufficiently small, we have \begin{equation*}
    k_f(\theta) \geq -t\re(e^{i\theta}h'(e^{i\theta}))+2t^2\Vert h\Vert_{L^\infty(\D)}\Vert h'\Vert_{L^\infty(\D)}\quad \text{almost everywhere}.
\end{equation*} Then the following holds:
If $f$ maximizes $L_1$, then \begin{equation}\label{EL2} 2\frac{\int_{\D}|f'(w)|^2\re(h(w))P(w)|dw|^2}{\poi(f)} - \frac{\int_{\D}|f'(w)|^2\re(h(w))|dw|^2}{\area(f)} \leq \re(h(1)).
\end{equation} If $f$ maximizes $L_2$, then \begin{equation}\label{EL3} 2\frac{\int_{\D}|f'(w)|^2\re(h(w))P(w)|dw|^2}{\poi(f)} - \frac{\int_{\partial \D}|f'(w)|\re(h(w))|dw|}{\len(f)} \leq \re(h(1)).
\end{equation}
\end{proposition}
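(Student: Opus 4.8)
The plan is to run an Euler--Lagrange argument along the one-parameter family $f_t := f + tg$, where $g$ is the antiderivative $g(z) := \int_0^z f'(s)h(s)\,ds$, so that $g' = f'h$ and hence $f_t' = f'(1+th)$. The first task is to check that $f_t \in \mathbf{F}_0$ for all sufficiently small $t>0$, so that $f_t$ is an admissible competitor; the second is to differentiate $L_i(f_t)$ at $t=0^+$ and use maximality of $f$.

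\emph{Step 1 (admissibility of $f_t$).} Since $h$ is bounded on $\overline{\D}$, the factor $1+th$ is bounded and bounded away from $0$ for $t$ small, so $f_t' = f'(1+th) \neq 0$ on $\D$; moreover $\poi(f_t) = \int_{\D}|f'|^2|1+th|^2P \le (1+t\sup_{\overline{\D}}|h|)^2\,\poi(f) < \infty$, and $f_t'$ inherits a nontangential limit at $1$ from $f$. The one genuinely delicate point is convexity of $f_t(\D)$, equivalently $k_{f_t} \ge 0$ by Corollary~\ref{con_sign}. From $(f_t)''/(f_t)' = f''/f' + th'/(1+th)$ we get
\begin{equation*}
    v_{f_t}(z) = v_f(z) + \re\!\left(\frac{t\,z\,h'(z)}{1+th(z)}\right),
\end{equation*}
and since $h \in C^1(\overline{\D})$ the added term extends continuously to $\overline{\D}$, so interchanging $\liminf$ with this convergent continuous term yields
\begin{equation*}
    k_{f_t}(\theta) = k_f(\theta) + \re\!\left(\frac{t\,e^{i\theta}h'(e^{i\theta})}{1+th(e^{i\theta})}\right) = k_f(\theta) + t\,\re\!\left(e^{i\theta}h'(e^{i\theta})\right) + O(t^2),
\end{equation*}
uniformly in $\theta$. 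The hypothesis $k_f(\theta) \ge -t\,\re(e^{i\theta}h'(e^{i\theta}))$ is tailored precisely to cancel the first-order term; that it is assumed for \emph{all} small $t$ (which forces a definite gap wherever $\re(e^{i\theta}h'(e^{i\theta})) < 0$) is what should let one absorb the uniform $O(t^2)$ remainder and conclude $k_{f_t} \ge 0$, hence $f_t \in \mathbf{F}_0$, for $t$ in some interval $(0,t_1)$. I expect this convexity-preservation step to be the main obstacle.

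\emph{Step 2 (the variational inequality).} Granting Step 1, since $f$ maximizes $L_i$ over $\mathbf{F}_0$ and $f_0 = f$ we have $\tfrac{d}{dt}\big|_{t=0^+}\log L_i(f_t) \le 0$. Writing $\log L_1(f_t) = \log\poi(f_t) - \log(2\pi) - \log|f_t'(1)| - \tfrac12\log\area(f_t)$ and $\log L_2(f_t) = \log\poi(f_t) - \log(2\pi) - \log|f_t'(1)| - \log\len(f_t)$, I differentiate each term using Lemma~\ref{first_variation} with $g' = f'h$ (so $\re(\overline{f'}g') = |f'|^2\re(h)$ and $\re(\overline{f'(1)}g'(1)) = |f'(1)|^2\re(h(1))$), together with the direct computation $\tfrac{d}{dt}\big|_0|1+th| = \re(h)$, which gives $\tfrac{d}{dt}\big|_0\len(f_t) = \int_{\partial\D}|f'(\zeta)|\re(h(\zeta))|d\zeta|$ by dominated convergence. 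This yields $\tfrac{d}{dt}\big|_{0^+}\log\poi(f_t) = 2\poi(f)^{-1}\int_{\D}|f'|^2\re(h)P$, $\tfrac{d}{dt}\big|_{0^+}\log|f_t'(1)| = \re(h(1))$, $\tfrac{d}{dt}\big|_{0^+}\log\area(f_t) = 2\area(f)^{-1}\int_{\D}|f'|^2\re(h)$ and $\tfrac{d}{dt}\big|_{0^+}\log\len(f_t) = \len(f)^{-1}\int_{\partial\D}|f'|\re(h)$. Hence, if $f$ maximizes $L_1$,
\begin{equation*}
    0 \ge 2\frac{\int_{\D}|f'(w)|^2\re(h(w))P(w)|dw|^2}{\poi(f)} - \re(h(1)) - \frac{\int_{\D}|f'(w)|^2\re(h(w))|dw|^2}{\area(f)},
\end{equation*}
which is \eqref{EL2}, and if $f$ maximizes $L_2$,
\begin{equation*}
    0 \ge 2\frac{\int_{\D}|f'(w)|^2\re(h(w))P(w)|dw|^2}{\poi(f)} - \re(h(1)) - \frac{\int_{\partial \D}|f'(\zeta)|\re(h(\zeta))|d\zeta|}{\len(f)},
\end{equation*}
which is \eqref{EL3}.

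Besides Step 1, the remaining points are routine: the justification of differentiating the integral functionals in $t$ at $t = 0^+$ (dominated convergence, with dominating functions controlled by $|f'|^2P$, $|f'|^2$ and $|f'|$, all integrable because $f \in \mathbf{F}_0$), the regularity needed to apply Lemma~\ref{first_variation} to $g$ (for which one may approximate via Lemma~\ref{approx} if necessary), and the $\liminf$-interchange used in Step 1, which is valid precisely because $h \in C^1(\overline{\D})$.
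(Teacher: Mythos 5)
Your Step 2 is correct and is exactly the paper's computation: the same perturbation $g=\int f'h$, the same use of Lemma~\ref{first_variation} for $\poi$, $\area$ and $|f'(1)|$, and the same direct expansion $\len(f+tg)=\int_{\partial\D}|f'||1+th|\,|d\zeta|$ for the perimeter term. The genuine gap is in Step~1, precisely at the point you flag and then leave as an expectation. Writing $\phi(\theta)=\re\bigl(e^{i\theta}h'(e^{i\theta})\bigr)$, the exact identity is
\begin{equation*}
    v_{f+tg}(z)=v_f(z)+t\,\re\!\left(\frac{zh'(z)}{1+th(z)}\right)=v_f(z)+t\,\re\bigl(zh'(z)\bigr)-t^2\,\re\!\left(\frac{zh(z)h'(z)}{1+th(z)}\right),
\end{equation*}
and the hypothesis only provides a first-order gap of size $t\,|\phi(\theta)|$ on the set where $\phi<0$; that gap degenerates near the endpoints of this set, and where $\phi\geq 0$ the hypothesis gives nothing beyond $k_f\geq 0$, while the second-order term $\re\bigl(e^{i\theta}h h'/(1+th)\bigr)$ has no sign in general. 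So ``the definite gap absorbs the uniform $O(t^2)$ remainder'' is not an argument: pointwise on the boundary the inequality $k_f(\theta)+t\phi(\theta)\geq t^2\cdot O(1)$ can fail exactly where $k_f$ and $\phi$ are simultaneously small or where $\phi\geq0$, and you supply no mechanism to rule this out. This admissibility step is the only part of the proposition requiring an idea, and your proposal does not prove it.

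There is also a structural point your boundary-only route misses: even granting the a.e.\ inequality $k_{f+tg}\geq 0$, Corollary~\ref{con_sign} (and the minimum principle behind it) needs control of the liminf at \emph{every} boundary point, not almost every one, so an a.e.\ statement on $\partial\D$ does not by itself yield $v_{f+tg}>0$ on $\D$. The paper handles both issues at once by working in the interior: since $v_f$ and $\re(zh'(z))$ are harmonic and the latter is continuous on $\overline{\D}$ (here $h\in C^1(\overline{\D})$ is used), the a.e.\ boundary hypothesis is upgraded via the maximum principle to the pointwise interior inequality $v_f(z)\geq -t\,\re(zh'(z))$ on all of $\D$, and the uniform bound $\bigl|\re\bigl(zh h'/(1+th)\bigr)\bigr|\leq \Vert h\Vert_{L^\infty}\Vert h'\Vert_{L^\infty}/(1-t\Vert h\Vert_{L^\infty})$ is then absorbed at the level of $v_{f+tg}$ rather than of $k_{f+tg}$. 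If you want to complete your write-up, you must either reproduce this harmonic-extension/maximum-principle transfer or find a substitute for it; as written, Step~1 is an acknowledged hole, not a proof. (A final minor point: Lemma~\ref{first_variation} is stated for $g\in C^1(\overline{\D})$, which $\int f'h$ need not be for general $f\in\mathbf{F}_0$; your parenthetical appeal to Lemma~\ref{approx} is not needed, since $\poi(f+tg)=\int_\D|f'|^2|1+th|^2P$ and dominated convergence, with dominating functions $C|f'|^2P$, $C|f'|^2$, $C|f'|$, gives the derivatives directly.)
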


\begin{remark}
     Suppose $f\in\mathbf{F}_0$ is the maximizer of either $L_i$. Using Euler-Lagrange argument for $\log L_i$, we know that for any holomorphic $g: \D\to\C$ such that \begin{equation}\label{perb2}
    f+tg \in \mathbf{F_0} \quad \text{for all $t>0$ sufficiently small,}
\end{equation} we need to have \begin{equation*}
    \left.\frac{d}{dt}\log L_i(f+tg)\right|_{t=0} \leq 0.
\end{equation*} 
\end{remark}

\begin{proof}[Proof of Proposition~\ref{perbu}]
    Let us first look at when Condition~$\eqref{perb2}$ will be true.  Suppose $g$ is a primitive of $f'h$ for some $h\in \hol(\D)\cap C^1(\overline{\D})$ (we write it as $\int f'h$). For $f\in \mathbf{F}$, $|f'|$ is lower-bouned away from $0$ in $\D$. Since $(f+t\int f'h)' = f'(1+th)$, it follows that \begin{equation*}
    \left|\left(f+t\int f'h\right)'(z)\right| > |f'(z)|\left(1-t\Vert h\Vert_{L^\infty(\D)}\right) > 0,
\end{equation*} for all $t>0$ sufficiently small. Moreover, for any $f\in \mathbf{F_0}$, we have \begin{equation*}
    \poi\left(f+t\int f' h\right) \leq \left(1+t \Vert h\Vert_{L^\infty(\D)}\right)^2\poi(f) < \infty.
\end{equation*} Thus to check $f+t\int f' h \in \mathbf{F_0}$, it suffices to check that \begin{align*}
    0< v_{f+t\int f'h}(z)
    &= \re\left(1+z\frac{f''(z)(1+th(z))+tf'(z)h'(z)}{f'(z)(1+th(z))}\right)\\
    &=\re \left(1+z\frac{f''(z)}{f'(z)}\right) + t\re\left(z\frac{h'(z)}{1+th(z)}\right)\\
    &=v_f(z) + t\re\left(zh'(z)\right)-t^2\re\left(\frac{zh(z)h'(z)}{1+th(z)}\right).
\end{align*} Since $h'$ is also bounded on $\D$, we have \begin{equation*}
    \left|\re\left(\frac{zh(z)h'(z)}{1+th(z)}\right)\right| \leq \frac{\Vert h\Vert_{L^\infty(\D)}\Vert h'\Vert_{L^\infty(\D)}} {\left|1-t\Vert h\Vert_{L^\infty(\D)}\right|}.
\end{equation*} Therefore, Condition~\ref{perb2} holds if \begin{equation*}
    v_f(z) > -t \re(zh'(z))+2t^2\Vert h\Vert_{L^\infty(\D)}\Vert h'\Vert_{L^\infty(\D)}\quad \text{for all $t>0$ sufficiently small}.
\end{equation*} Since both sides are harmonic functions and the right-hand side is continuous up to $\partial\D$, the maximum principle implies that Condition~\ref{perb2} is true if for $t>0$ sufficiently small, $k_f(\theta)$ dominate $-t\re\left(e^{i\theta}h'(e^{i\theta})\right)+t^2\Vert h\Vert_{L^\infty(\D)}\Vert h'\Vert_{L^\infty(\D)}$ almost everywhere.
Note that for $g' = f'h$, by Lemma~\ref{first_variation}, we have that 
\begin{align*}
\left.\frac{d}{dt}\log L_1(f+tg)\right|_{t=0} &= 2\frac{\int_{\D}|f'(w)|^2\re(h(w))P(w)|dw|^2}{\poi(f)}\\& - \frac{\int_{\D}|f'(w)|^2\re(h(w))|dw|^2}{\area(f)} - \re(h(1)).
\end{align*}
As for $L_2$, we need to check for the functional $\len(\cdot)$. We have that \begin{align*}
    \len(f+tg) &= \int_{\partial\D} |f'(\zeta)+tg'(\zeta)||d\zeta| =\int_{\partial\D} |f'(\zeta)||1+th(\zeta)||d\zeta|\\
    &= \int_{\partial\D} |f'(\zeta)|\left(1+t\re(h(\zeta))+o(t)\left\Vert h\right\Vert_{L^{\infty}(\partial\D)}\right)|d\zeta|.
\end{align*}

Thus we have \begin{equation*}
    \left.\frac{d}{dt}\len(f+tg)\right|_{t=0} = \int_{\partial\D}|f'(\zeta)|\re(h(\zeta))|d\zeta|.
\end{equation*}
Therefore,

\begin{align*}
\left.\frac{d}{dt}\log L_2(f+tg)\right|_{t=0} &= 2\frac{\int_{\D}|f'(w)|^2\re(h(w))P(w)|dw|^2}{\poi(f)}\\
&- \frac{\int_{\partial\D}|f'(\zeta)|\re(h(\zeta))|d\zeta|}{\len(f)} - \re(h(1)).
\end{align*}

The result then follows.
\end{proof}

\subsection{A nice family of test functions} We now construct a family of test functions $\set{h_b}$ that will be later used in the Euler-Lagrange argument. 
\begin{lemma}\label{nice_fam}
    There exists a family of $C^\infty(\overline{\D})$ conformal maps $\set{h_b}_{b\in(0,1)}$ that satisfies the following properties.
\begin{enumerate}
    \item $|h_b(z)| \leq 10$.
    \item $\re (h_b(1)) < -0.1$.
    \item $\re(h_b(z))\to 0$ pointwise on $\D$ as $b$ tends to $1$.
    \item for each $b$, there exists a closed interval $I(b) \subseteq (0,\pi)$ such that \begin{equation*}
        \set{\theta \in [-\pi,\pi] | \re\left(e^{i\theta}h'_b(e^{i\theta})\right) <0 } \subseteq I(b)
    \end{equation*} and \begin{equation*}
        \inf_{\theta \in [-\pi,\pi]\backslash I(b)}\re\left(e^{i\theta}h'_b(e^{i\theta})\right) >0.
    \end{equation*} 
\end{enumerate}
\end{lemma}

Before proving this, we make the following observation:

\begin{remark}\label{neg}
If $h$ is a conformal map in $C^1(\overline{\D})$, then it is angle-preserving on $\partial\D$ and $h(\partial\D)$ is a $C^1$ curve. Let $\mathbf{n}_\Omega:\partial\Omega \to \partial\D$ be the normal vector of $\Omega$ going outward. Since $h$ is angle-preserving, $\mathbf{n}_{h(\D)}(\zeta)$ is in the direction of velocity vector of $t\mapsto h(te^{i\theta})$ where $\theta = \arg h^{-1}(\zeta)$. Then we have\begin{equation*}
    \re\left(e^{i\theta}h'(e^{i\theta})\right) = |h'(e^{i\theta})|\re\left(\mathbf{n}_{h(\D)}(h(e^{i\theta}))\right) = |h'(e^{i\theta})| \mathbf{n}_{h(\D)}(h(e^{i\theta}))\cdot \begin{pmatrix}
        1\\0
    \end{pmatrix}.
\end{equation*}
This implies that the negativity of $\re\left(e^{i\theta}h'(e^{i\theta})\right)$ for the definition of $I_-(b)$ depends only on outward pointing normal vectors of $h_b(\D)$ around $h(e^{i\theta})$ (note that $h'\neq 0$ by Kellogg's theorem).
\end{remark}

\begin{proof}[Proof of Lemma~\ref{nice_fam}]
We set $h:\D \to B_{1}(-1)$ be the biholomorphism such that $h(-1) = 0$, $h(1) = e^{i\pi/4}-1$ and $\mathrm{Arg}(h(0)+1) = \pi/8$. Note that there are two such functions but both are in $C^\infty(\overline{\D})$. For $b\in [0,1)$, we define \begin{equation*}
    h_b(z) = h\left(\frac{z-b}{1-bz}\right)
\end{equation*} 

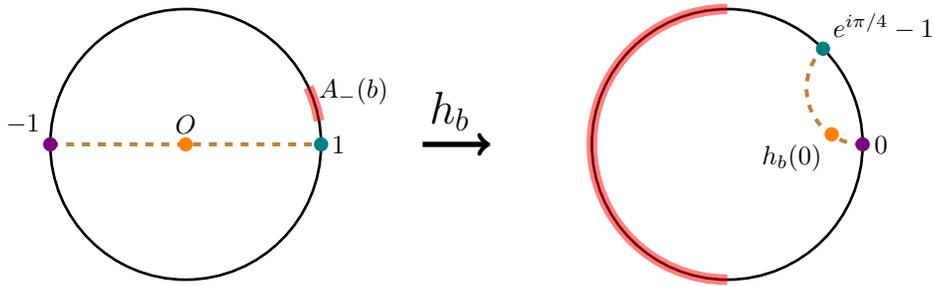
\begin{figure}[ht]
    \centering
    \begin{tikzpicture}[scale=0.9]
        \draw [line width=1pt] (-4,0) circle (2);

        \draw[red, line width=4, opacity=0.5] ({-4+2*cos(10)},{2*sin(10)}) arc(10:25:2);
        
        \draw[brown, dashed, rounded corners=9, line width=1.5] (-6,0) -- (-2,0);
        \fill [violet] (-6,0) circle (3pt) node [black, above left=0.6pt]{$-1$};
        \fill [orange] (-4,0) circle (3pt) node [black, above=0.6pt]{$O$};
        \fill [teal] (-2,0) circle (3pt) node [black, right=0.6pt]{$1$};

        \draw [->,line width=2pt] (-0.5,0) -- (0.5,0) node[pos=0.4,above=1pt] {\huge $h_b$};
    
        \draw [line width=1pt] (4,0) circle (2);

        \draw[red, line width=4, opacity=0.5] ({4+2*cos(90)},{2*sin(90)}) arc(90:270:2);
        
        \harcc{4}{0}{2}{0}{45}{brown, dashed, rounded corners=9, line width=1.5}
        \fill [teal] ({2*cos(45)+4},{2*sin(45)}) circle (3pt) node [black, above right=0.3pt]{$e^{i\pi/4}-1$};
        \fill [orange] ({1.55*cos(5.5)+4},{1.55*sin(5.5)}) circle (3pt) node [black, below left=0.4pt]{$h_b(0)$} ;
        \fill [violet] (6,0) circle (3pt) node [black, right=1pt]{$0$};
        \node at ({-4+2.6*cos(17.5)},{2.6*sin(17.5)}) {$A_-(b)$};

    \end{tikzpicture}
    \caption{Conformal map $h_b$ with circular arcs $A_-(b)$. Note that as $b\to 1$, $h_b(0)\in h((-1,1))$ approaches $0$. Thus the harmonic measure of the left semi-circle tends to $0$ and consequently the arc $A_-(b)$ shrink to $\set{1}$.}
    \label{fig:hb}
\end{figure}

 Then each $h_b$ is still a $C^\infty(\overline{\D})$ map from $\D$ onto $B_1(-1)$ and maps $-1$ and $1$ to $0$ and $e^{i\pi/4}-1$ respectively. We also have $h_b([-1,1]) = h([-1,1])$ (see Figure~\ref{fig:hb}). Moreover, we have \begin{equation*}
    \lim_{b\to 1}h_b(z) = h(-1) = 0,
\end{equation*} for all $z\in\overline{\D} \backslash\set{1}$. Now we define \begin{equation*}
    A_-(b) = h_b^{-1}(\set{e^{i\theta}-1|\theta \in(\pi/2,3\pi/2)}).
\end{equation*}

\begin{figure}[ht]
    \centering
    \begin{tikzpicture}
        \draw[->] (-4,0)--(4,0) node[right]{$x$};
        \draw[->] (0,-3)--(0,1) node[above]{$y$};

        \draw[red, line width = 4, opacity=0.5] (0.4,0) -- (1,0);
        \node at (0.7, 0.5) {$I_-(b)$};

        \draw[line width=1.5, domain=-3.141:3.141, smooth, samples=50,variable=\t]
  plot ({\t},{-0.4*(0.7599*cos((\t-0.7) r) - 0.714)/(-1.96*(sin((\t-0.7) r))^2 - 4.172*cos((\t-0.7) r) + 4.1801)+0.1});

        \fill [teal] (0,0) circle (3pt) node [black, below left=0.6pt]{$O$};
        \fill [violet] (-3.141,0) circle (3pt) node [black, below=0.6pt]{$-\pi$};
        \fill [violet] (3.141,0) circle (3pt) node [black, below=0.6pt]{$\pi$};
    \end{tikzpicture}
    \caption{Graph of $\re\left(e^{i\theta}h'_b(e^{i\theta})\right)$. Note that since $A_-(b)$ shrink to $1$ as $b\to 1$, the interval $I_-(b)$ will shrink to $0$.}
    \label{fig:lh}
\end{figure}

Then it follows that $\mathcal{H}^1(A_-(b))/(2\pi)$ is the harmonic measure of the left semi-circle of $\partial B_{1}(-1)$ with respect to $h_b(0)$. We denote the interval on $[-\pi,\pi]$ corresponding to $A_-(b)$ as $I_-(b)$ (see Figure~\ref{fig:lh}). Then we have $|I_-(b)| = \mathcal{H}^1(A_-(b))$ and according to the discussion in Remark~\ref{neg}, \begin{align*}
    \re\left(e^{i\theta}h'_b(e^{i\theta})\right) \begin{cases}
        < 0, & \text{when}\quad \theta \in I_-(b),\\
        = 0, & \text{when}\quad e^{i\theta}\in h_b^{-1}(\set{-\pi/2,\pi/2}),\\
        >0, &\text{otherwise.}
    \end{cases}
\end{align*}

In fact, $h_b$ is a translation of Möbius transformation so one can compute this function explicitly. Using the fact that $h_b(0) \to 0$ (and thus the harmonic measure on $\partial B_{1}(-1)$ with respect to $h_b(0)$ converges to $\delta_{-i}$), we can conclude that $I_-(b)\subseteq (0,\pi)$ and as $b$ tends to $1$, we have $\sup I_-(b) \to 0$. One can take a closed interval $I(b)\subseteq (0,\pi)$ slightly larger than $I_-(b)$ such that $\re\left(e^{i\theta}h'_b(e^{i\theta})\right)$ is lower-bounded away from $0$ outside $I(b)$. The result then follows.

\end{proof}

\subsection{A statement for flatness} Using these properties of $h_b$, we can restrict the behavior of the conformal curvature $k_f$ 
around $0$ for a maximizer $f\in \mathbf{F}_0$ to prove

\begin{proposition}\label{theo3}
    Suppose $f$ is a conformal map in $\mathbf{F}_0$ that maximizes either $L_1$ or $L_2$. Then the boundary of the domain $f(\D)$ has infinitely many points with zero conformal curvature accumulated around $f(1)$.
\end{proposition}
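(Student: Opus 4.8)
The plan is to argue by contradiction, combining the Euler--Lagrange inequality of Proposition~\ref{perbu} with the test family $\{h_b\}_{b\in(0,1)}$ of Lemma~\ref{nice_fam}. First I would reduce the statement to the curvature signature: since $f(\D)$ is convex, Corollary~\ref{con_sign} gives $k_f\ge 0$, and Lemma~\ref{domination} gives a constant $C_f\in(0,\infty)$ with $0\le K_{f(\D)}(f(e^{i\theta}))\le C_f k_f(\theta)$, so every zero of $k_f$ produces a zero of the conformal curvature. Because $f$ extends to a homeomorphism of $\overline{\D}$ onto $\overline{f(\D)}$, it therefore suffices to show that the zero set $\{\theta:k_f(\theta)=0\}$ accumulates at $\theta=0$. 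Suppose it does not; since $k_f$ is lower semicontinuous we may then fix $\varepsilon_0>0$ with $k_f(\theta)>0$ for all $\theta\in(-\varepsilon_0,0)\cup(0,\varepsilon_0)$.

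Next I would feed the maps $h_b$ into Proposition~\ref{perbu}. Fix $b$ so close to $1$ that the interval $I_-(b)$ of Lemma~\ref{nice_fam} lies inside $(0,\varepsilon_0)$; this is possible precisely because $I_-(b)\subseteq(0,\pi)$ shrinks to $\{0\}$. On the compact interval $I_-(b)$ the lower semicontinuous function $k_f$ attains a minimum $\delta(b)>0$. For $\theta\notin I_-(b)$ one has $\re(e^{i\theta}h_b'(e^{i\theta}))\ge 0$, so $-t\re(e^{i\theta}h_b'(e^{i\theta}))\le 0\le k_f(\theta)$; for $\theta\in I_-(b)$ one has $-t\re(e^{i\theta}h_b'(e^{i\theta}))\le t\,\|h_b'\|_{L^\infty(\partial\D)}\le\delta(b)\le k_f(\theta)$ once $t$ is small (here $\|h_b'\|_{L^\infty(\partial\D)}<\infty$ because $h_b\in C^\infty(\overline{\D})$). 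Hence the hypothesis of Proposition~\ref{perbu} holds for $h=h_b$ and all sufficiently small $t>0$. I expect this step to be the crux: reconciling the admissibility condition $k_f(\theta)\ge -t\re(e^{i\theta}h_b'(e^{i\theta}))$ with the fact that the right-hand side is strictly positive on $I_-(b)$ works only because the ``bad'' arc $I_-(b)$ can be pushed into the region where, under the contradiction hypothesis, $k_f$ is bounded below by a positive constant — the order of quantifiers (first choose $b$, then let $t\to 0$, and only at the end let $b\to1$) is essential.

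Finally I would pass to the limit $b\to1$. Applying Proposition~\ref{perbu} with $h=h_b$ and using $\re(h_b(1))=-1$, the maximizer of $L_1$ satisfies
\begin{equation*}
2\,\frac{\int_{\D}|f'(w)|^2\re(h_b(w))P(w)|dw|^2}{\poi(f)}-\frac{\int_{\D}|f'(w)|^2\re(h_b(w))|dw|^2}{\area(f)}\le -1,
\end{equation*}
and the maximizer of $L_2$ satisfies the analogous inequality with $\area(f)$ replaced by $\len(f)$ and the second integral replaced by $\int_{\partial\D}|f'(\zeta)|\re(h_b(\zeta))|d\zeta|$. By Lemma~\ref{nice_fam} and its proof, $|h_b|\le 10$, $\re(h_b(w))\to 0$ pointwise on $\D$, and $h_b(\zeta)\to -i$ for every $\zeta\in\partial\D\setminus\{1\}$, so $\re(h_b(\zeta))\to 0$ almost everywhere on $\partial\D$. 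Since $|f'|^2\in L^1(\D)$, $|f'|^2P\in L^1(\D)$ (as $f\in\mathbf{F}_0$), and $|f'|\in L^1(\partial\D)$ (Lemma~\ref{wfm}), the dominated convergence theorem shows every integral above tends to $0$ as $b\to1$. Thus the left-hand side tends to $0$ while remaining $\le -1$ for all $b$ near $1$ — a contradiction. This forces $\{\theta:k_f(\theta)=0\}$ to accumulate at $0$, and hence $f(\partial\D)$ has infinitely many points of zero conformal curvature accumulating at $f(1)$, proving the proposition.
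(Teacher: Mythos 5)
Your proposal is correct and follows essentially the same route as the paper: reduce to the curvature signature via Lemma~\ref{domination}, assume $k_f>0$ on a punctured neighborhood of $0$, push $I_-(b)$ into that region so the admissibility condition of Proposition~\ref{perbu} holds for small $t$, and let $b\to 1$ with dominated convergence to contradict $\re(h_b(1))=-1$. The only (immaterial) difference is that the paper runs the argument separately on each side of $0$ to produce both a positive and a negative sequence of zeros, whereas your one-sided contradiction already suffices for the stated conclusion.
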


This will help us show Theorem~\ref{theo}. Note that by Lemma~\ref{domination}, we can restate the proposition in terms of $k_f(\theta)$.

\begin{proposition}\label{main}
    Suppose $f\in \mathbf{F}_0$ maximizes either $L_i$. Then we can find an increasing negative sequence $\set{n_j}$ and decreasing positive sequence $\set{p_j}$, both approaching $0$, such that \begin{equation*}
        k_f(n_j) = k_f(p_j) = 0.
    \end{equation*} As $k_f(\theta)$ is lower semicontinuous, we also have $k_f(0)=0$.
\end{proposition}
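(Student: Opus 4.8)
The plan is to argue by contradiction, combining the Euler--Lagrange inequality of Proposition~\ref{perbu} with the family $\{h_b\}_{b\in(0,1)}$ of Lemma~\ref{nice_fam}. Suppose that no decreasing positive sequence of zeros of $k_f$ accumulates at $0$. Since $f\in\mathbf{F}_0\subseteq\mathbf{F}$, Corollary~\ref{con_sign} gives $k_f\ge 0$ on $[-\pi,\pi]$, so the assumption means there is a $\delta>0$ with $k_f(\theta)>0$ for every $\theta\in(0,\delta)$. I will show this is incompatible with $f$ maximizing $L_1$ or $L_2$. The existence of an increasing negative sequence of zeros is obtained identically using the reflected family $\tilde h_b(z):=\overline{h_b(\bar z)}$, for which properties (1)--(3) of Lemma~\ref{nice_fam} persist verbatim and whose ``bad'' set $\tilde I_-(b)=-I_-(b)$ lies in $(-\pi,0)$ and shrinks to $\{0\}$. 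Once both one-sided sequences are in hand, lower semicontinuity of $k_f$ forces $k_f(0)\le\liminf_{\theta\to0}k_f(\theta)=0$, and $k_f(0)\ge0$ then gives $k_f(0)=0$.

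The first step is to check that $h=h_b$ satisfies the hypothesis of Proposition~\ref{perbu}, that is, $k_f(\theta)\ge -t\,\re(e^{i\theta}h_b'(e^{i\theta}))$ a.e.\ for all sufficiently small $t>0$, provided $b$ is close enough to $1$. Off the interval $I_-(b)$ one has $\re(e^{i\theta}h_b'(e^{i\theta}))\ge0$ by the sign pattern established in the proof of Lemma~\ref{nice_fam}, so the right-hand side is $\le 0\le k_f(\theta)$ and nothing is needed. On $I_-(b)$ we use the assumption: since $\sup I_-(b)\to0$ as $b\to1$, fix $b$ so close to $1$ that the closed interval $I_-(b)\subseteq(0,\delta)$; then $k_f>0$ on the compact set $I_-(b)$, and being lower semicontinuous it attains there a strictly positive minimum $c_b>0$. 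For $0<t<c_b/\|h_b'\|_{L^\infty(\partial\D)}$ (finite, as $h_b\in C^\infty(\overline\D)$) we get $-t\,\re(e^{i\theta}h_b'(e^{i\theta}))\le t\,\|h_b'\|_{L^\infty(\partial\D)}<c_b\le k_f(\theta)$ on $I_-(b)$, as required. I expect this step --- extracting a positive lower bound for $k_f$ on the shrinking interval $I_-(b)$ out of the no-accumulation hypothesis, using lower semicontinuity --- to be the only genuinely delicate point; the rest is soft.

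Granted admissibility, Proposition~\ref{perbu} applies for all $b$ close enough to $1$. If $f$ maximizes $L_1$ it yields
\begin{equation*}
2\,\frac{\int_{\D}|f'(w)|^2\re(h_b(w))P(w)\,|dw|^2}{\poi(f)}-\frac{\int_{\D}|f'(w)|^2\re(h_b(w))\,|dw|^2}{\area(f)}\le\re(h_b(1))=-1,
\end{equation*}
and if $f$ maximizes $L_2$ the same bound holds with the second term replaced by $\int_{\partial\D}|f'(\zeta)|\re(h_b(\zeta))\,|d\zeta|/\len(f)$. Now let $b\to1$. By Lemma~\ref{nice_fam}(1), $|\re(h_b)|\le10$; by Lemma~\ref{nice_fam}(3) together with $h_b(z)\to-i$ for $z\in\overline\D\setminus\{1\}$ (from the proof of that lemma), $\re(h_b)\to0$ pointwise on $\D$ and a.e.\ on $\partial\D$. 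Since $|f'|^2\in L^1(\D)$, $|f'|^2P\in L^1(\D)$ (because $\poi(f)<\infty$), and $|f'|\in L^1(\partial\D)$ by Lemma~\ref{wfm}, dominated convergence sends each numerator above to $0$, so the left-hand side tends to $0$; but the right-hand side is the constant $-1$, giving $0\le-1$. This contradiction shows that the positive zeros of $k_f$ accumulate at $0$, whence a decreasing sequence $p_j\downarrow0$ with $k_f(p_j)=0$. Running the same argument with $\tilde h_b$ produces the increasing sequence $n_j\uparrow0$ with $k_f(n_j)=0$, and $k_f(0)=0$ follows from lower semicontinuity and $k_f\ge 0$ as explained above.
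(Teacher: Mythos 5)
Your proof is correct and follows essentially the same route as the paper: assume $k_f>0$ on a one-sided interval at $0$, verify the admissibility hypothesis of Proposition~\ref{perbu} for $h_b$ with $b$ close to $1$ using lower semicontinuity of $k_f$ on the shrinking interval $I_-(b)$, and then let $b\to1$ so that dominated convergence forces the left-hand side of the Euler--Lagrange inequality to $0$, contradicting the bound $-1$. Your only additions are making explicit the reflected family $\tilde h_b(z)=\overline{h_b(\bar z)}$ for the negative sequence (which the paper dismisses as ``similarly'') and spelling out the deduction $k_f(0)=0$; both are fine.
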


\begin{proof}
    We will only show the existence of the decreasing positive sequence. The existence of the negative sequence can be proven similarly.

Suppose for the contradiction that $k_f(\theta) >0$ on some interval $(0, a)$ with $a>0$. By lower semi-continuity, $k_f(\theta)$ is lower-bounded away from $0$ on all compact subsets of $(0,a)$.
We take the family of test functions $\set{h_b:\D\to B_{-1}(1)}_{b\in(0,1)}$ in Lemma~\ref{nice_fam}. The lemma implies that there exists $b_0$ such that for all $b\in (0,b_0)$, $I(b)$ is a closed interval in $(0, a)$. It follows that $\inf_{\theta\in I(b)}k_f(\theta) >0$. Since $\theta\mapsto \re\left(e^{i\theta}h'_b(e^{i\theta})\right)$ is smooth, for any $t$  such that \begin{equation*}
     0<t < \min\left(\sqrt{\frac{\inf_{\theta\in I(b)}k_f(\theta)}{4\Vert h\Vert_{L^\infty(\D)}\Vert h'\Vert_{L^\infty(\D)}}},\frac{\inf_{\theta\in I(b)}k_f(\theta)}{2\sup_{\theta\in I(b)}|\re\left(e^{i\theta} h'_b(e^{i\theta})\right)|}\right),
\end{equation*} we have \begin{equation*}
    k_f(\theta) > -t\re\left(e^{i\theta} h'_b(e^{i\theta})\right) + 2t^2\Vert h\Vert_{L^\infty(\D)}\Vert h'\Vert_{L^\infty(\D)},
\end{equation*} for all $\theta$ inside $I(b)$. On the other hand, for any \begin{equation*}
    0<t < \frac{\inf_{\theta\in [-\pi,\pi]\backslash I(b)}\re\left(e^{i\theta} h'_b(e^{i\theta}\right)}{4\Vert h\Vert_{L^\infty(\D)}\Vert h'\Vert_{L^\infty(\D)}},
\end{equation*}
we have for all $\theta\in [-\pi,\pi]\backslash I(b)$\begin{equation*}
    k_f(\theta) \geq 0 > -t\re\left(e^{i\theta} h'_b(e^{i\theta})\right) + 2t^2\Vert h\Vert_{L^\infty(\D)}\Vert h'\Vert_{L^\infty(\D)}.
\end{equation*} Therefore, if $f$ maximizes $L_1$, by inequality $\eqref{EL2}$ in Proposition~\ref{perbu}, we have \begin{equation*} 2\frac{\int_{\D}|f'|^2\re(h_b)P}{\poi(f)} - \frac{\int_{\D}|f'|^2\re(h_b)}{\area(f)} \leq \re(h_b(1)) < -0.1,
\end{equation*} for all $b\in (b_0,1)$. However, $\set{\re h_b}$ is a uniformly bounded sequence of functions converging to $0$ on $\overline{\D}\backslash\set{1}$. Thus by dominated convergence theorem, the left-hand hand of the inequality tends to $0$ as $b$ tends to $1$, causing a contradiction.

Similarly, if $f$ maximizes $L_2$, by inequality $\eqref{EL3}$, we have \begin{equation*} 2\frac{\int_{\D}|f'|^2\re(h_b)P}{\poi(f)} - \frac{\int_{\partial \D}|f'|\re(h_b)}{\len(f)} \leq \re(h_b(1)) < -0.1,
\end{equation*} for all $b\in (0,b_0)$. Again by dominated convergence theorem, the left-hand hand of the inequality tends to $0$ as $b$ tends to $1$, causing a contradiction.
\end{proof}

\section{Going Back to Torsion Function}\label{goingback}
\subsection{Normal Derivative}
    Suppose $f:\D\to \Omega$ is a Riemann map of a bounded convex domain $\Omega$. Since $\partial \Omega$ is rectifiable, we know that $f\in \mathbf{H}^1$. In fact, we can conclude something stronger:

    \begin{lemma}[F. and M. Riesz theorem, \cite{riesz1923randwerte}]\label{fm} Let $\Omega$ be a Jordan domain with rectifiable boundary and let $f:\D\to\Omega$ be a Riemman map of $\Omega$. Then for any $\varphi \in L^1(\partial\Omega)$, we have \begin{equation*}
        \int_{\partial\Omega}\varphi = \int_{\partial\D} \varphi(f(\zeta))|f'(\zeta)||d\zeta|.
    \end{equation*}
    \end{lemma}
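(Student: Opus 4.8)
The plan is to prove the identity first for continuous $\varphi$, where it reduces to the one–dimensional area formula for a rectifiable curve, and then to remove the continuity hypothesis by a density argument. The starting point is what has already been assembled: since $\partial\Omega$ is a rectifiable Jordan curve, Lemma~\ref{wfm} gives $f'\in\mathbf{H}^1$, so $f'$ has finite nontangential boundary values $f'(\zeta)$ for a.e.\ $\zeta\in\partial\D$ and $\int_{\partial\D}|f'(\zeta)||d\zeta|=\mathcal{H}^1(\partial\Omega)<\infty$. The classical boundary behaviour of $\mathbf{H}^1$ functions — the part of the F.\ and M.\ Riesz theorem being cited, see also \cite{duren2001univalent,garnett2005harmonic} — tells us moreover that $s\mapsto\gamma(s):=f(e^{is})$ is absolutely continuous on $[-\pi,\pi]$ with $\gamma'(s)=ie^{is}f'(e^{is})$ for a.e.\ $s$, so in particular $|\gamma'(s)|=|f'(e^{is})|$ a.e. By Carathéodory's theorem $f$ is a homeomorphism of $\overline{\D}$ onto $\overline{\Omega}$, so $\gamma$ is an injective, absolutely continuous parametrization of $\partial\Omega$.

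For $\varphi\in C(\partial\Omega)$ I would invoke the area formula for rectifiable curves: if $\gamma:[a,b]\to\C$ is absolutely continuous and injective, then $\int_{\gamma([a,b])}g\,d\mathcal{H}^1=\int_a^b g(\gamma(s))|\gamma'(s)|\,ds$ for every Borel $g\geq 0$ on $\gamma([a,b])$. Applying this with $g=\varphi$ (split into positive and negative parts) together with $|\gamma'(s)|=|f'(e^{is})|$ yields
\[
\int_{\partial\Omega}\varphi\,d\mathcal{H}^1=\int_{-\pi}^{\pi}\varphi(\gamma(s))|\gamma'(s)|\,ds=\int_{\partial\D}\varphi(f(\zeta))|f'(\zeta)||d\zeta|.
\]
Since two finite Borel measures on the compact set $\partial\Omega$ that integrate every continuous function identically must coincide, this says precisely that the pushforward under the homeomorphism $f$ of the finite measure $|f'(\zeta)||d\zeta|$ on $\partial\D$ equals $\mathcal{H}^1$ restricted to $\partial\Omega$; in particular, a Borel set $N\subseteq\partial\Omega$ is $\mathcal{H}^1$-null if and only if $f^{-1}(N)$ is $|f'(\zeta)||d\zeta|$-null.

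To treat a general $\varphi\in L^1(\partial\Omega,\mathcal{H}^1)$, choose $\varphi_n\in C(\partial\Omega)$ with $\varphi_n\to\varphi$ in $L^1(\partial\Omega)$; passing to a subsequence we may also assume $\varphi_n\to\varphi$ $\mathcal{H}^1$-a.e.\ on $\partial\Omega$. Applying the continuous case to $|\varphi_n-\varphi_m|$ shows that $\{\varphi_n\circ f\}$ is Cauchy in $L^1(\partial\D,|f'(\zeta)||d\zeta|)$, while the null-set correspondence established above transfers the $\mathcal{H}^1$-a.e.\ convergence on $\partial\Omega$ to $|f'(\zeta)||d\zeta|$-a.e.\ convergence of $\varphi_n\circ f$ to $\varphi\circ f$ on $\partial\D$; hence $\varphi_n\circ f\to\varphi\circ f$ in $L^1(\partial\D,|f'(\zeta)||d\zeta|)$. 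Letting $n\to\infty$ in $\int_{\partial\Omega}\varphi_n\,d\mathcal{H}^1=\int_{\partial\D}\varphi_n(f(\zeta))|f'(\zeta)||d\zeta|$ gives the claim.

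The only genuinely nontrivial input is the absolute continuity of $\gamma$ on $\partial\D$ together with the formula $\gamma'(s)=ie^{is}f'(e^{is})$ — i.e.\ the portion of the F.\ and M.\ Riesz theorem quoted in the statement; the remainder is bookkeeping with the area formula and a routine approximation. The one place that calls for a little care is the transfer of almost-everywhere convergence across $f$ in the density step, which is exactly why one first isolates the pushforward/null-set identity in the continuous case.
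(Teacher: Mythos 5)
Your argument is correct, but there is nothing in the paper to compare it against line by line: the paper states Lemma~\ref{fm} as a classical fact and simply cites F.~and M.~Riesz (cf.\ also \cite{duren2001univalent, garnett2005harmonic}), offering no proof of its own. What you have done is reduce the stated change-of-variables identity to the standard ingredients: Lemma~\ref{wfm} (rectifiable boundary $\Rightarrow f'\in\mathbf{H}^1$ with nontangential boundary values), the classical consequence of the F.~and M.~Riesz theorem that $s\mapsto f(e^{is})$ is then absolutely continuous with derivative $ie^{is}f'(e^{is})$ a.e., Carath\'eodory's theorem for injectivity of the boundary parametrization, and the one-dimensional area formula for injective absolutely continuous curves (valid since absolutely continuous functions are differentiable a.e.\ and satisfy Luzin's condition (N)). That chain is sound, and your care in first establishing the identity for continuous $\varphi$, deducing that the pushforward of $|f'(\zeta)||d\zeta|$ under $f$ is $\mathcal{H}^1$ restricted to $\partial\Omega$, and using the resulting null-set correspondence to transfer a.e.\ convergence is exactly the point where a sloppier argument would go wrong. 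One simplification worth noting: once you know the pushforward measure identity $f_*\bigl(|f'(\zeta)||d\zeta|\bigr)=\mathcal{H}^1|_{\partial\Omega}$, the case of general $\varphi\in L^1(\partial\Omega)$ follows at once from the abstract change-of-variables theorem for image measures, so the approximation by continuous functions and the Cauchy-sequence bookkeeping can be dispensed with entirely.
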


    Using F. and M. Riesz theorem, we will show the following:

    \begin{proposition}\label{explicit}
        Let $\Omega$ be a bounded convex domain and let $f:\D\to\Omega$ be a Riemann map of $\Omega$. Then for almost every $\theta\in [-\pi,\pi)$ \begin{equation*}
            \frac{\partial u}{\partial \mathbf{n}}(f(\zeta)) = -\frac{\int_\D P(\zeta,w)|f'(w)|^2|dw|^2}{|f'(\zeta)|}>-\infty.
        \end{equation*}
    \end{proposition}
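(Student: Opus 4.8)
\textit{Strategy.} The identity is already available for $C^{1+\alpha}$ domains via Lemma~\ref{function}, whose proof in fact establishes it at \emph{every} boundary point, not only the extremal one. The plan is to pass from this smooth case to a general bounded convex $\Omega$ by an integrated (weak) argument built around the F.\ and M.\ Riesz theorem (Lemma~\ref{fm}), which converts integrals over $\partial\Omega$ into integrals over $\partial\D$. Concretely, I would first record the elementary facts that (i) $|f'(\zeta)|>0$ for a.e.\ $\zeta$, since $f'\in\mathbf{H}^1$ (Lemma~\ref{wfm}) is not identically zero and hence $\log|f'|\in L^1(\partial\D)$; and (ii) $\int_\D P(\zeta,w)|f'(w)|^2|dw|^2<\infty$ for a.e.\ $\zeta$ together with finiteness of the whole statement, because by Tonelli and $\int_{\partial\D}P(\zeta,w)|d\zeta|=1$ one has $\int_{\partial\D}\big(\int_\D P(\zeta,w)|f'(w)|^2|dw|^2\big)|d\zeta|=\area(f)=|\Omega|<\infty$. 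Once these are in place, the assertion $>-\infty$ will be automatic, and it will suffice to identify $|f'(\zeta)|\,\tfrac{\partial u}{\partial\mathbf n}(f(\zeta))$ with $-\int_\D P(\zeta,w)|f'(w)|^2|dw|^2$ as $L^1(\partial\D)$ functions.

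\textit{The weak identity.} Next I would establish, for every $\psi$ harmonic on $\Omega$ and continuous on $\overline\Omega$, the divergence--theorem identity $\int_{\partial\Omega}\psi\,\tfrac{\partial u}{\partial\mathbf n}\,d\mathcal H^1 = -\int_\Omega\psi\,|dw|^2$. This uses that $u\in W^{2,2}(\Omega)$ (valid because $\Omega$ is convex), so that Green's identity is legitimate, the term $\int_{\partial\Omega}u\,\partial_{\mathbf n}\psi$ vanishing since $u\equiv0$ on $\partial\Omega$ (a routine exhaustion of $\Omega$ by $f((1-\delta)\D)$ makes this rigorous for the not-necessarily-smooth $\psi$ used below). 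Taking $\psi\equiv 1$ gives $\int_{\partial\Omega}\tfrac{\partial u}{\partial\mathbf n}\,d\mathcal H^1=-|\Omega|$; since $\tfrac{\partial u}{\partial\mathbf n}\le 0$ a.e.\ (as $u>0$ inside and $u=0$ on $\partial\Omega$), this shows $\tfrac{\partial u}{\partial\mathbf n}\in L^1(\partial\Omega)$, so the boundary integrals are meaningful and, by Lemma~\ref{fm}, transfer to $\partial\D$.

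\textit{Transfer and conclusion.} I would then test the weak identity against the harmonic functions $\psi$ determined by $\psi\circ f(\zeta)=\operatorname{Re}\zeta^{k}$ and $\operatorname{Im}\zeta^{k}$, i.e.\ $\psi=\operatorname{Re}(f^{-1})^{k}$ and $\operatorname{Im}(f^{-1})^{k}$ for $k\ge 0$ (harmonic on $\Omega$, continuous on $\overline\Omega$ by Carathéodory, and bounded). Applying Lemma~\ref{fm} to the left side ($\varphi=\psi\cdot\tfrac{\partial u}{\partial\mathbf n}\in L^1(\partial\Omega)$), and on the right side writing $\int_\Omega\psi=\int_\D(\psi\circ f)(w)|f'(w)|^2|dw|^2$, replacing the harmonic function $\psi\circ f$ by its Poisson representation $\int_{\partial\D}P(\zeta,w)(\psi\circ f)(\zeta)|d\zeta|$ and interchanging integrals (justified by the Tonelli bound above and boundedness of $\psi\circ f$), one obtains
$$\int_{\partial\D}(\psi\circ f)(\zeta)\Big[\,|f'(\zeta)|\,\tfrac{\partial u}{\partial\mathbf n}(f(\zeta))+\int_\D P(\zeta,w)|f'(w)|^2|dw|^2\,\Big]|d\zeta|=0$$
for all such $\psi$. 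Since the boundary functions $\{\operatorname{Re}\zeta^{k},\operatorname{Im}\zeta^{k}\}_{k\ge0}$ span a dense subspace of $C(\partial\D)$, the bracket vanishes a.e.; dividing by $|f'(\zeta)|>0$ a.e.\ yields the formula, and the right-hand side is finite (hence $>-\infty$) a.e.\ by step (ii). An entirely parallel, and perhaps more self-contained, route is to keep the smooth exhaustion $f_r(z)=f(rz)$ of Lemma~\ref{approx}: for each $r<1$ the proof of Lemma~\ref{function} gives the identity at every $\zeta$, on the right side $\poi$ and $|f_r'(\zeta)|$ converge by monotone convergence (Lemma~\ref{increase}) and Lemma~\ref{wfm}, and one is left to pass to the limit on the left, i.e.\ to prove a Fatou-type statement for the Green potential $v=u\circ f=\int_\D G(\cdot,w)|f'(w)|^2|dw|^2$ of $|f'|^2\in L^1(\D)$.

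\textit{Main obstacle.} Every step except one is soft (the exhaustion, the monotone-convergence limits, the a.e.\ positivity of $|f'|$, Tonelli, and the Poisson/Fubini bookkeeping). The genuine difficulty is making sense of $\tfrac{\partial u}{\partial\mathbf n}$ on a merely convex — hence Lipschitz, possibly with infinitely many corners — boundary, and justifying the identities used with it: the $W^{2,2}$-regularity and Green's identity on convex domains, the a.e.\ existence of the pointwise (nontangential) normal derivative, and, in the alternative approach, the Fatou theorem for Green potentials (split the integral into the part far from $\zeta$, where $G((1-t)\zeta,w)/t\to P(\zeta,w)$ with a dominated integrand, and a shrinking neighbourhood of $\zeta$, where the logarithmic — hence integrable — singularity of $G$ together with absolute continuity of $\int|f'|^2$ controls the remainder). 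I expect this regularity-of-the-normal-derivative point to be where the real work lies.
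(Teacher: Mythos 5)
Your proposal is correct in substance, but its main route is genuinely different from the paper's. The paper never touches Sobolev regularity of $u$ on $\Omega$ or Green's identity on the Lipschitz boundary: it defines the candidate $u_N$, works entirely on the disk with the truncations $f_r(z)=f(rz)$, where $v_r=\int G(\cdot,w)|f_r'(w)|^2|dw|^2$ is smooth up to $\partial\mathbb{D}$ and its normal derivative is the classical Poisson integral, passes to the limit by monotone/dominated convergence against general test functions $\psi\in C^\infty(\overline{\mathbb{D}})$, then inserts $\varphi\circ f_r$ for $\varphi\in C^\infty(\mathbb{R}^2)$ and transfers to $\Omega$ via the F.\ and M.\ Riesz theorem; the conclusion is the full Gauss--Green identity $\int_{\partial\Omega}\varphi u_N=\int_\Omega(-\varphi+u\Delta\varphi)$, and the identification $u_N=\frac{\partial u}{\partial\mathbf{n}}$ a.e.\ is then made by ``uniqueness of weak derivatives.'' Your main variant instead imports the $W^{2,2}$ regularity of $u$ on convex domains and the divergence theorem on a Lipschitz domain to get the weak identity $\int_{\partial\Omega}\psi\,\frac{\partial u}{\partial\mathbf{n}}=-\int_\Omega\psi$ for harmonic $\psi$, and then obtains the Poisson kernel not as a limit of normal derivatives of Green potentials but directly from the Poisson representation of $\psi\circ f=\mathrm{Re}\,\zeta^k,\ \mathrm{Im}\,\zeta^k$, finishing by density of trigonometric polynomials and a.e.\ positivity of $|f'|$; your Tonelli computation $\int_{\partial\mathbb{D}}\int_{\mathbb{D}}P(\zeta,w)|f'(w)|^2\,|dw|^2|d\zeta|=|\Omega|$ is in fact a cleaner way to get finiteness (the ``$>-\infty$'' part) than the paper's limiting argument. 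What your route buys is brevity and a direct appearance of $P(\zeta,w)$; what it costs is reliance on external machinery (Kadlec/Grisvard $H^2$ estimates on convex domains, trace theory for the boundary term) -- though on convex domains one can also just use $|\nabla u|\in L^\infty$ -- whereas the paper's exhaustion is self-contained complex analysis plus the smooth-data case. Finally, the ``main obstacle'' you flag (pointwise a.e.\ meaning of $\frac{\partial u}{\partial\mathbf{n}}$) is not actually resolved at a finer level in the paper either: both arguments identify the normal derivative only in the weak/trace (Gauss--Green) sense, so no additional work beyond your sketch is demanded by the paper's own standard; your second, alternative sketch (exhaustion by $f_r$ and a Fatou-type limit for the Green potential) is essentially the paper's proof.
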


    \begin{proof}
        We set \begin{equation*}
            u_N(f(\zeta)) = -\frac{\int_\D P(\zeta,w)|f'(w)|^2|dw|^2}{|f'(\zeta)|}.
        \end{equation*} We want to show that for all $\varphi\in C^{\infty}(\R^2)$, $u_N$ satisfies the Gauss-Green formula: \begin{equation}\label{weak}
            \int_{\partial\Omega} \varphi u_N =  \int_\Omega (\varphi\Delta u +u\Delta \varphi) = \int_\Omega (-\varphi +u\Delta \varphi).
        \end{equation}
        Then it follows that $u_N = \partial u/\partial \mathbf{n}$ almost everywhere on $\partial\Omega$ by the uniqueness of weak derivatives. Note that Lemma~\ref{fm} allows us to rewrite $\eqref{weak}$ as
    \begin{align*}
        \int_{\partial\D} \varphi (f(\zeta)) u_N(f(\zeta))|f'(\zeta)||d\zeta|  
        =\int_\D -\varphi(f(w))+u(f(w))\Delta \varphi(f(w)) |f'(w)|^2|dw|^2.
    \end{align*}
    
    For $r\in(0,1)$, we set $f_r(z) = f(rz)$ as in the proof of Lemma~\ref{approx} and let $v_r$ be the solution to \begin{align*}
            -\Delta v_r = |f'_r|^2\quad &\text{inside}\quad \D,\\
        v_r = 0\quad &\text{on}\quad \partial \D.
        \end{align*} Note that a unique solution exists in $C^{\infty}(\overline{\D})$ as $|f'_r|^2 \in C^\infty(\overline{\D})$. This implies that \begin{align*}
            v_r(z) &= \int_\D G(z,w)|f_r'(w)|^2|dw|^2,\\
            \frac{\partial v_r}{\partial \mathbf{n}}(e^{i\theta}) &= -\int_\D P(e^{i\theta},w)|f'_r(w)|^2|dw|^2. 
        \end{align*} Recall that for each $z\in \D$, $|f'_r(z)|$ increases to $|f'(z)|$. Since both $G(z,w)$ and $P(\zeta,w)$ are positive, the monotone convergence theorem implies that \begin{equation*}
            v_r(z) \uparrow v(z),\quad \frac{\partial v_r}{\partial \mathbf{n}}(e^{i\theta}) \downarrow v_N(e^{i\theta}),
        \end{equation*} where \begin{equation*}
            v_N(e^{i\theta}) = -\int_\D P(e^{i\theta},w)|f'(w)|^2|dw|^2 = |f'(e^{i\theta})|u_N(f(e^{i\theta})),
        \end{equation*} 
        which a priori could be $-\infty$. Note that $v_r \in C^{\infty}(\overline{\D})$ and $\frac{\partial v_r}{\partial \mathbf{n}} \in C^{\infty}(\partial\D)$. By the Gauss-Green formula, for any $\psi\in C^{\infty}(\overline{\D})$, we have \begin{equation}\label{gg}
            \int_{\partial\D} \psi(\zeta) \frac{\partial v_r}{\partial \mathbf{n}}(\zeta)|d\zeta|  = \int_\D \left(-\psi(w)|f'_r(w)|^2+v_r(w)\Delta \psi(w)\right)|dw|^2.
        \end{equation} The right-hand side of $\eqref{gg}$ is dominated by the integral \begin{equation*}
            \max(\Vert\psi\Vert_{L^{\infty}(\D)},\Vert\Delta \psi\Vert_{L^{\infty}(\D)})\int_\D \left(|f'(w)|^2 +v(w)\right)|dw|^2.
        \end{equation*} Therefore, it converges as $r\to 1$ to \begin{equation*}
            \int_\D \left(-\psi(w)|f'(w)|^2+v(w)\Delta \psi(w)\right)|dw|^2.
        \end{equation*} In particular, for any positive $\psi \in C^\infty(\overline{\D})$, it follows that \begin{align*}
            \int_{\partial\D} \psi(\zeta) v_N(\zeta)|d\zeta| 
            =&\lim_{r\to 1}\int_{\partial\D} \psi(\zeta) \frac{\partial v_r}{\partial \mathbf{n}}(\zeta)|d\zeta|\\
            = & \int_\D \left(-\psi(w)|f'(w)|^2+v(w)\Delta \psi(w)\right)|dw|^2 > -\infty.
        \end{align*} 
        It follows that $v_N \in L^1(\partial\D)$ and thus $|u_N|$ is finite almost everywhere on $\partial\Omega$. Since every $\psi\in C^{\infty}(\overline{\D})$ is lower-bounded, it follows that \begin{equation}\label{eq8}
            \int_{\partial\D} \psi(\zeta) v_N(\zeta)|d\zeta| = \int_\D \left(-\psi(w)|f'(w)|^2+v(w)\Delta \psi(w)\right)|dw|^2,
        \end{equation} holds for all $\psi \in C^{\infty}(\overline{\D})$. Now for $\varphi \in C^\infty(\R^2)$, we know that $\varphi\circ f_r \in C^\infty(\overline{\D})$ for all $r\in(0,1)$. Plugging it to $\eqref{eq8}$, we have \begin{equation*}
            \int_{\partial\D} \varphi(f(r\zeta)) v_N(\zeta)|d\zeta| = \int_\D -\varphi(f(rw))|f'(w)|^2+v(w)r^2|f'(rw)|^2\Delta \varphi(f(rw))|dw|^2.
        \end{equation*} Note that $(\varphi\circ f_r )v_N$ is dominated by $\left\Vert\varphi\right\Vert_{L^\infty(\overline{\D})}|v_N|\in L^1(\partial\D)$ while the integrant on the right-hand side is dominated by \begin{equation*}
            \left(\left\Vert\varphi\right\Vert_{L^\infty(\overline{\D})} + \left\Vert\Delta\varphi\right\Vert_{L^\infty(\overline{\D})}\left\Vert u\right\Vert_{L^\infty(\Omega)}\right)|f'|^2 \in L^1(\D).
        \end{equation*} Applying the dominated convergence theorem to both sides, we have \begin{equation*}
            \int_{\partial\D} \varphi(f(\zeta)) v_N(\zeta)|d\zeta| = \int_\D -\varphi(f(w))|f'(w)|^2+v(w)|f'(w)|^2\Delta \varphi(f(w))|dw|^2,
        \end{equation*}
            which is
        \begin{equation*}
            \int_{\partial\D} \varphi(f(\zeta)) \frac{v_N(\zeta)}{|f'(\zeta)|}|f'(\zeta)||d\zeta|
            = \int_\D \left(-\varphi(f(w))+v(w)\Delta \varphi(f(w))\right)|f'(w)|^2|dw|^2.
        \end{equation*}
        By change of variables, we have
        \begin{equation*}
            \int_{\partial\Omega} \varphi u_N
            = \int_\Omega \left(-\varphi+u\Delta \varphi\right).
        \end{equation*} The result then follows.
    \end{proof}

    We recall that by Corollary~\ref{hol}, we have \begin{align*}
        \sup_{f\in\mathbf{F_0}} L_1(f) &= \sup_{\text{\tiny $\Omega$~convex, bounded, analytic}} \mathbb{L}_1(\Omega)\\
        \sup_{f\in\mathbf{F_0}} L_2(f) &= \sup_{\text{\tiny $\Omega$~convex, bounded, analytic}} \mathbb{L}_2(\Omega).
    \end{align*}
    Proposition~\ref{explicit} gives us the equivalence between $L_i$ and $\mathbb{L}_i$.
    \begin{theorem}\label{equiv}
      We have \begin{align*}
        \sup_{f\in\mathbf{F_0}} L_1(f) &= \sup_{\emph{\tiny $\Omega$~convex, bounded}} \mathbb{L}_1(\Omega)\\
        \sup_{f\in\mathbf{F_0}} L_2(f) &= \sup_{\emph{\tiny $\Omega$~convex, bounded}} \mathbb{L}_2(\Omega).
    \end{align*}
    \end{theorem}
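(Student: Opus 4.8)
The plan is to reduce the statement to Corollary~\ref{hol}, which already identifies $\sup_{f\in\mathbf{F}_0}L_i(f)$ with $\sup\set{\mathbb{L}_i(\Omega):\Omega\text{ bounded convex analytic}}$. Every bounded convex analytic domain is in particular a bounded convex domain, so the inequality $\sup_{f\in\mathbf{F}_0}L_i(f)\le\sup_{\Omega}\mathbb{L}_i(\Omega)$ is automatic, and the whole content of the theorem is the reverse inequality. By Corollary~\ref{hol} it therefore suffices to show that an arbitrary bounded convex domain $\Omega$ admits an exhaustion by bounded convex analytic domains $\set{\Omega_r}$ along which $\mathbb{L}_i$ does not drop, i.e. $\mathbb{L}_i(\Omega)\le\liminf_{r\to 1}\mathbb{L}_i(\Omega_r)$.

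Fix a bounded convex $\Omega$ with Riemann map $f:\D\to\Omega$ and set $\Omega_r:=f(r\D)$ for $r\in(0,1)$. By Study's theorem (Lemma~\ref{convex}) each $\Omega_r$ is convex, and since $z\mapsto f(rz)$ is holomorphic with non-vanishing derivative on a neighbourhood of $\overline{\D}$, each $\Omega_r$ is a bounded convex \emph{analytic} domain; moreover $\Omega_r\uparrow\Omega$. Two of the three quantities entering $\mathbb{L}_i$ converge for soft reasons. First, $|\Omega_r|=\int_{r\D}|f'(w)|^2|dw|^2\uparrow\int_{\D}|f'(w)|^2|dw|^2=|\Omega|$ by monotone convergence. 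Second, $\mathcal{H}^1(\partial\Omega_r)=r\int_{\partial\D}|f'(r\zeta)|\,|d\zeta|$ is increasing in $r$, by the monotonicity of $r\mapsto r|f'(r\zeta)|$ from Lemma~\ref{increase}, and tends to $\int_{\partial\D}|f'(\zeta)|\,|d\zeta|=\mathcal{H}^1(\partial\Omega)$ by monotone convergence together with the existence of radial boundary limits of $f'$ almost everywhere (Lemma~\ref{wfm}, valid since $\partial\Omega$ is rectifiable, so $f'\in\mathbf{H}^1$).

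The step requiring real care is the behaviour of the torsion functions. Let $u$ and $u_r$ be the torsion functions of $\Omega$ and $\Omega_r$. The maximum principle gives $u_r\le u_{r'}$ on $\Omega_r$ for $r<r'$ and $u_r\le u$ on $\Omega_r$, so $u_r\uparrow u_\infty$ pointwise in $\Omega$ with $0\le u_\infty\le u$; interior elliptic (Schauder) estimates together with Dini's theorem upgrade this to convergence in $C^2_{\mathrm{loc}}(\Omega)$, so $u_\infty$ solves $-\Delta u_\infty=1$ in $\Omega$. Since a bounded convex domain lies on one side of a supporting hyperplane at each boundary point, every boundary point is regular for the Dirichlet problem, so $u$ is continuous on $\overline{\Omega}$ with $u|_{\partial\Omega}=0$; hence the bounded nonnegative harmonic function $u-u_\infty$ tends to $0$ at every boundary point and therefore vanishes identically, giving $\nabla u_r\to\nabla u$ locally uniformly in $\Omega$. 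Consequently, for every compact $K\subset\Omega$ we get $\|\nabla u\|_{L^\infty(K)}=\lim_{r\to1}\|\nabla u_r\|_{L^\infty(K)}\le\liminf_{r\to1}\|\nabla u_r\|_{L^\infty(\Omega_r)}$; letting $K\uparrow\Omega$ and using that $\|\nabla u\|_{L^\infty(\Omega)}=\sup_{\Omega}|\nabla u|$ (as $u\in C^\infty(\Omega)$) yields $\|\nabla u\|_{L^\infty(\Omega)}\le\liminf_{r\to1}\|\nabla u_r\|_{L^\infty(\Omega_r)}$. Combining this with $|\Omega_r|\to|\Omega|>0$ and $\mathcal{H}^1(\partial\Omega_r)\to\mathcal{H}^1(\partial\Omega)>0$ gives $\liminf_{r\to1}\mathbb{L}_i(\Omega_r)\ge\mathbb{L}_i(\Omega)$ for $i=1,2$. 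Since each $\Omega_r$ is analytic, $\mathbb{L}_i(\Omega)\le\sup\set{\mathbb{L}_i(\Omega'):\Omega'\text{ analytic}}=\sup_{f\in\mathbf{F}_0}L_i(f)$ by Corollary~\ref{hol}, and taking the supremum over $\Omega$ finishes the proof.

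I expect the gradient convergence $\nabla u_r\to\nabla u$ to be the only point of substance: it is standard, but it combines domain monotonicity of the torsion functions, interior Schauder estimates, and Dirichlet-regularity of convex domains, and one must be mildly careful that no boundary regularity of $\Omega$ is smuggled in (it is not, because we only ever evaluate $\sup|\nabla u|$ on compact subsets of the open set $\Omega$). As a remark, one can also argue directly, without any exhaustion: for the rotated Riemann maps $f^\zeta(z):=f(\zeta z)$ one checks $\area(f^\zeta)=\area(f)$, $\len(f^\zeta)=\len(f)$, $|(f^\zeta)'(1)|=|f'(\zeta)|$ and $\poi(f^\zeta)=2\pi\int_{\D}P(\zeta,w)|f'(w)|^2|dw|^2$, so Proposition~\ref{explicit} gives $L_1(f^\zeta)=\left|\tfrac{\partial u}{\partial\mathbf{n}}(f(\zeta))\right|\area(f)^{-1/2}$ and $L_2(f^\zeta)=\left|\tfrac{\partial u}{\partial\mathbf{n}}(f(\zeta))\right|\len(f)^{-1}$ together with $f^\zeta\in\mathbf{F}_0$ for almost every $\zeta\in\partial\D$; since the boundary values of $f$ carry null sets to null sets in both directions (Lemma~\ref{fm} and $|f'|>0$ a.e.), the supremum of $L_i(f^\zeta)$ over such $\zeta$ equals $\|\partial u/\partial\mathbf{n}\|_{L^\infty(\partial\Omega)}$ divided by $|\Omega|^{1/2}$ (resp.\ $\mathcal{H}^1(\partial\Omega)$), which is $\mathbb{L}_i(\Omega)$, so $\mathbb{L}_i(\Omega)\le\sup_{f\in\mathbf{F}_0}L_i(f)$. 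The delicate point in this variant is the identification of $\|\nabla u\|_{L^\infty(\Omega)}$ with the essential supremum over $\partial\Omega$ of the a.e.-defined normal derivative supplied by Proposition~\ref{explicit}.
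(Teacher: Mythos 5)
Your proof is correct, but it takes a genuinely different route from the paper. The paper proves the nontrivial inequality $\sup_{f\in\mathbf{F}_0}L_i(f)\ge\sup_\Omega\mathbb{L}_i(\Omega)$ by contradiction: given a bounded convex $\Omega$ beating the conformal supremum, it invokes Proposition~\ref{explicit} to find a boundary point $\zeta_0\in\partial\D$ at which the a.e.-defined normal derivative nearly attains $\left\Vert\partial u/\partial\mathbf{n}\right\Vert_{L^\infty(\partial\Omega)}$, then rotates the Riemann map, $f_0(z)=f(\zeta_0 z)$, and checks $\poi(f_0)<\infty$ so that $f_0\in\mathbf{F}_0$ and $L_i(f_0)$ is close to $\mathbb{L}_i(\Omega)$ -- this is exactly the variant you sketch in your closing remark, and the delicate point you flag there (identifying $\Vert\nabla u\Vert_{L^\infty(\Omega)}$ with the essential supremum of the a.e. normal derivative on a possibly rough convex boundary) is indeed the step the paper leans on via its Section~2.3 identity. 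Your main argument instead reduces everything to Corollary~\ref{hol} through the exhaustion $\Omega_r=f(r\D)$ and a purely PDE-theoretic lower semicontinuity of $\Vert\nabla u\Vert_{L^\infty}$ along the exhaustion (domain monotonicity of torsion functions, interior Schauder estimates, Dirichlet regularity of convex domains via exterior supporting half-planes), never touching boundary values of $\nabla u$; this buys you independence from Proposition~\ref{explicit} and from the boundary identification, at the cost of importing standard elliptic machinery that the paper otherwise avoids and of a longer argument. The paper's route is shorter, stays within its conformal toolkit, and its rotation construction is reused immediately afterwards to relate maximizers of $L_i$ and $\mathbb{L}_i$ (part (2) of the following theorem), which your exhaustion argument does not directly provide; your route, on the other hand, gives the slightly stronger quantitative statement $\mathbb{L}_i(\Omega)\le\liminf_{r\to1}\mathbb{L}_i(f(r\D))$ for every bounded convex $\Omega$, not just for domains whose Riemann map lies in $\mathbf{F}_0$ as in Lemma~\ref{approx}.
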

    \begin{proof}
        It is suffice to show that 
        $$\sup_{f\in\mathbf{F_0}} L_i(f) \geq \sup_{\text{\tiny $\Omega$~convex, bounded}}\mathbb{L}_i(\Omega).$$ Suppose that there exists a bounded convex domain $\Omega$ such that \begin{equation*}
            \mathbb{L}_1(\Omega) \geq \sup_{f\in\mathbf{F_0}} L_1(f) +\varepsilon_0,
        \end{equation*} for some $\varepsilon_0>0$. It follows from Proposition~\ref{explicit} that given a Riemann map $f\to \Omega$, there exists $\zeta_0\in\partial\D$ such that $f'(\zeta_0)$ exists and \begin{equation*}
        \infty>-\frac{\partial u}{\partial \mathbf{n}}(f(\zeta_0)) = \frac{\int_\D P(\zeta_0,w)|f'(w)|^2|dw|^2}{|f'(\zeta_0)| }>\left(\sup_{f\in\mathbf{F_0}} L_1(f)+\varepsilon_0/2\right)|\Omega|^{1/2}.
    \end{equation*} We set $f_0(z) = f(\zeta_0 z)$. Then we have \begin{align*}
        \poi(f_0) &= \int_\D P(w)|f'_0(w)|^2|dw|^2=\int_\D P(w/\zeta_0)|f'(w)|^2|dw|^2\\
        &=\int_\D 2\pi P(\zeta_0,w)|f'(w)|^2|dw|^2 = -2\pi|f'(\zeta_0)|\frac{\partial u}{\partial \mathbf{n}} (f(\zeta_0)) <\infty.
    \end{align*}
    It follows that $f_0\in \mathbf{F}_0$. Besides,\begin{equation*}
        L_1(f_0) = \frac{\poi(f_0)}{2\pi|f'(\zeta_0)||\Omega|^{1/2}} = -\frac{\frac{\partial u}{\partial \mathbf{n}}(f(\zeta_0))}{|\Omega|^{1/2}} > \sup_{f\in\mathbf{F_0}} L_1(f)+\varepsilon_0/2.
    \end{equation*}
    This causes a contradiction. The proof for $L_2$ and $\mathbb{L}_2$ follows similarly.
    \end{proof}

We will now show that there is a natural relationship between the maximizers of $L_i$ and $\mathbb{L}_i$ if they exist. Maximizers of $L_i$ can be used to generate a sequence of convex domains for which $\mathbb{L}_i$ has the same limit and, conversely, maximizing convex domains $\Omega$ give rise to Riemann maps for which $L_i$ converges to the same limit.

    \begin{theorem}
        For $i \in \left\{1,2\right\}$, \begin{enumerate}
            \item Suppose $f\in \mathbf{F}_0$ maximizes $L_i$. Then if we set $\Omega_r = f(r\D)$, then $\set{\Omega_r}$ forms a increasing sequence of analytic convex domains such that
        \begin{equation*}
            \mathbb{L}_i(\Omega_r) \to \sup_{\emph{\tiny $\Omega$~convex, bounded}} \mathbb{L}_1(\Omega)\quad \text{as}\quad r\to 1.
        \end{equation*}
            \item  Suppose $\Omega$ is a convex bounded domain that maximizes $\mathbb{L}_i$. Then there exists a sequence of Riemann maps $f_n:\D \to \Omega$ in $\mathbf{F}_0$ such that \begin{equation*}
                L_i(f_n) \to \sup_{f\in\mathbf{F_0}} L_1(f) \quad \text{as}\quad n\to \infty.
            \end{equation*}
        \end{enumerate}
    \end{theorem}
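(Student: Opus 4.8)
The plan is to derive both statements from the identity $\sup_{f\in\mathbf{F}_0}L_i(f)=\sup_{\Omega}\mathbb{L}_i(\Omega)$ of Theorem~\ref{equiv}, which lets me pass freely between the conformal functional and the geometric one: the first assertion is then essentially a rereading of Lemma~\ref{approx}, and the second a rerun of the argument inside the proof of Theorem~\ref{equiv}.

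\emph{Part (1).} Let $f\in\mathbf{F}_0$ be a maximizer, so that $L_i(f)=\sup_{g\in\mathbf{F}_0}L_i(g)=\sup_\Omega\mathbb{L}_i(\Omega)$ by Theorem~\ref{equiv}. Writing $f_r(z)=f(rz)$ I have $\Omega_r=f(r\D)=f_r(\D)$; since $f_r\in\hol(\overline{\D})$ with $f_r'\neq0$ on $\overline{\D}$, each $\Omega_r$ is a bounded analytic domain, it is convex by Study's theorem (Lemma~\ref{convex}), and injectivity of $f$ together with $r_1\D\subseteq r_2\D$ gives the nesting $\Omega_{r_1}\subseteq\Omega_{r_2}$ for $r_1\le r_2$, with $\bigcup_r\Omega_r=f(\D)$. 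On the one hand $\mathbb{L}_i(\Omega_r)\le\sup_\Omega\mathbb{L}_i(\Omega)=L_i(f)$ for every $r$, since $\Omega_r$ is an admissible competitor. On the other hand, I would reread the proof of Lemma~\ref{approx}: it gives the monotone convergences $\poi(f_r)\uparrow\poi(f)$, $\area(f_r)\uparrow\area(f)$, $\len(f_r)\uparrow\len(f)$, and $|f_r'(1)|=r|f'(r)|\uparrow|f'(1)|$ (the last from Lemma~\ref{increase} at $\theta=0$ and the nontangential limit at $1$), hence $L_i(f_r)\to L_i(f)$ as $r\to1$; and part (4) of that lemma yields $\mathbb{L}_i(\Omega_r)\ge L_i(f_r)$. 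Squeezing the two bounds, $\mathbb{L}_i(\Omega_r)\to L_i(f)=\sup_\Omega\mathbb{L}_i(\Omega)$.

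\emph{Part (2).} Let $\Omega$ be a bounded convex maximizer, so $\mathbb{L}_i(\Omega)=\sup_\Omega\mathbb{L}_i(\Omega)=\sup_{f\in\mathbf{F}_0}L_i(f)$ by Theorem~\ref{equiv}, and fix any Riemann map $g:\D\to\Omega$. I would repeat the construction from the proof of Theorem~\ref{equiv}: combining the Sperb-type identity $\|\nabla u\|_{L^\infty(\Omega)}^2=\|\partial u/\partial\mathbf{n}\|_{L^\infty(\partial\Omega)}^2$, the representation $-\partial u/\partial\mathbf{n}(g(\zeta))=|g'(\zeta)|^{-1}\int_\D P(\zeta,w)|g'(w)|^2|dw|^2$ from Proposition~\ref{explicit}, and the F. and M. Riesz theorem (Lemma~\ref{fm}) to transfer the $L^\infty$ norm on $\partial\Omega$ to $\partial\D$, I obtain for each $n$ a point $\zeta_n\in\partial\D$ at which $g'(\zeta_n)$ exists as a finite nonzero nontangential limit, $\int_\D P(\zeta_n,w)|g'(w)|^2|dw|^2<\infty$, and
\begin{equation*}
-\frac{\partial u}{\partial\mathbf{n}}(g(\zeta_n)) = \frac{\int_\D P(\zeta_n,w)|g'(w)|^2|dw|^2}{|g'(\zeta_n)|} > \|\nabla u\|_{L^\infty(\Omega)} - \frac1n .
\end{equation*}
I then set $f_n(z):=g(\zeta_n z)$, which is again a Riemann map of $\Omega$. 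The change-of-variable computation in the proof of Theorem~\ref{equiv} gives $\poi(f_n)=2\pi|g'(\zeta_n)|\,(-\partial u/\partial\mathbf{n})(g(\zeta_n))<\infty$, so $f_n\in\mathbf{F}_0$; and since rotating $\D$ preserves area and boundary length, $|f_n'(1)|=|g'(\zeta_n)|$, $\area(f_n)=|\Omega|$ and $\len(f_n)=\mathcal{H}^1(\partial\Omega)$, whence
\begin{equation*}
L_1(f_n)=\frac{-\partial u/\partial\mathbf{n}(g(\zeta_n))}{|\Omega|^{1/2}}, \qquad L_2(f_n)=\frac{-\partial u/\partial\mathbf{n}(g(\zeta_n))}{\mathcal{H}^1(\partial\Omega)} .
\end{equation*}
Letting $n\to\infty$ and using $\mathbb{L}_1(\Omega)=\|\nabla u\|_{L^\infty(\Omega)}/|\Omega|^{1/2}$ (resp.\ with $\mathcal{H}^1(\partial\Omega)$) gives $L_i(f_n)\to\mathbb{L}_i(\Omega)=\sup_{f\in\mathbf{F}_0}L_i(f)$, as claimed.

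The main obstacle, such as it is, lies entirely in Part (2): one must be sure that $\|\nabla u\|_{L^\infty(\Omega)}$ is captured up to $o(1)$ by the values of $-\partial u/\partial\mathbf{n}$ at \emph{almost every} boundary point, i.e.\ that the essential supremum of the (a.e.\ defined) normal derivative over $\partial\Omega$, pulled back to $\partial\D$ via Lemma~\ref{fm}, coincides with $\|\partial u/\partial\mathbf{n}\|_{L^\infty(\partial\Omega)}$. This is the same mechanism already exploited in the proof of Theorem~\ref{equiv}, so no new analytic input is needed; it is also what forces a sequence rather than a single map. If in addition the maximizing boundary point corresponds to a point of $\partial\D$ at which $g$ has a finite nonzero nontangential derivative, the same construction produces a single Riemann map $f\in\mathbf{F}_0$ with $L_i(f)=\mathbb{L}_i(\Omega)$. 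Part (1), by contrast, involves no estimate beyond Lemma~\ref{approx}.
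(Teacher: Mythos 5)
Your proposal is correct and follows essentially the same route as the paper: part (1) is the squeeze combining Lemma~\ref{approx} (the paper's citation of ``Part (5)'' is a typo for part (4)) with Theorem~\ref{equiv}, and part (2) is exactly the paper's construction of Riemann maps normalized so that near-maximal boundary points of $-\partial u/\partial\mathbf{n}$ (supplied a.e.\ by Proposition~\ref{explicit}) are sent to $1$, your $f_n(z)=g(\zeta_n z)$ differing from the paper's $f_{p_n}$ only by an inessential rotation. Your closing remark about the essential supremum is the same implicit mechanism the paper already relies on in the proof of Theorem~\ref{equiv}, so nothing new is needed.
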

    
    \begin{proof}
        The first part follows immediately from Part~(5) of Lemma~\ref{approx}. For the second part, we pick any point $z_0 \in \Omega$. For any $p\in\partial \Omega$, we set $f_p$ be the Riemann map of $\Omega$ such that \begin{equation*}
            f_p(0) = z_0,\quad f_p(1) = p.
        \end{equation*} Then we know from Proposition~\ref{explicit} that for almost all $p\in\partial \Omega$, we have $f_p\in \mathbf{F}_0$ and \begin{align*}
            \frac{\partial u}{\partial \mathbf{n}}(p) &= -\frac{\int_\D P(1,w)|f_p'(w)|^2|dw|^2}{|f'_p(1)|} = -\frac{\poi(f_p)}{2\pi|f_p'(1)|}.
        \end{align*} Then we can define a sequence of points $\set{p_n}\subset\partial \Omega$ such that the above equality holds and $-\partial u/{\partial \mathbf{n}}(p_n)$ converges to $\left\Vert \partial u/\partial \mathbf{n}\right\Vert_{L^\infty(\partial\Omega)}$.
    \end{proof}
        
\subsection{Local Smoothness Results}
We need a local version of Kellogg's theorem.
\begin{lemma}[{\cite[Theorem 4.1]{garnett2005harmonic}}]\label{local}
    Suppose $f:\D\to \Omega$ is a conformal map onto a Jordan domain $\Omega$ and  $\gamma' = f(\gamma)$ an open subarc of $\partial \Omega$. Let $k\geq 1$ and $0<\alpha <1$. If $\gamma'$ is $C^{k+\alpha}$, then $f$ is $C^{k+\alpha}$ on compact subsets of $\D\cup \gamma$ and $f'\neq 0$ on $\gamma$.
\end{lemma}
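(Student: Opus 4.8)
The plan is to deduce the local statement from the global Kellogg theorem by a localize-and-compare argument. Since $f$ is holomorphic, it is automatically $C^\infty$ on $\D$, and the assertion is local along $\gamma$: given a compact $K\subseteq\D\cup\gamma$, the set $K\cap\partial\D$ is a compact subset of $\gamma$, so it suffices to produce, for each $\zeta_0\in\gamma$, an open arc $\sigma\subseteq\gamma$ containing $\zeta_0$ and an open $U\subseteq\D$ with $\sigma\subseteq\partial U$ such that $f\in C^{k+\alpha}(U\cup\sigma)$ and $f'\neq 0$ on $\sigma$; one then covers $K\cap\partial\D$ by finitely many such $\sigma$ and invokes interior regularity on the remaining compact part of $K$ inside $\D$.

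First I would fix $\zeta_0\in\gamma$, put $p_0=f(\zeta_0)$, and shrink $\gamma$ so that, after a rigid motion, the portion of $\gamma'$ near $p_0$ is the graph $\{(x,\phi(x)):|x|<\delta\}$ of a $C^{k+\alpha}$ function $\phi$, with $\Omega$ coinciding near this graph with the region above it. A routine cut-off and gluing construction then yields a bounded Jordan domain $\Omega_1$ with globally $C^{k+\alpha}$ boundary that contains the shorter graph $\gamma_1':=\{(x,\phi(x)):|x|<\delta/2\}$ in its boundary and that coincides with $\Omega$ in some open neighbourhood $W$ of $\gamma_1'$. Let $g:\D\to\Omega_1$ be a Riemann map. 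By Carath\'eodory's theorem \cite{caratheodory1913begrenzung}, $g$ extends to a homeomorphism $\overline{\D}\to\overline{\Omega_1}$; by the global Kellogg theorem \cite{kellogg1931derivatives}, $g\in C^{k+\alpha}(\overline{\D})$ with $g'\neq 0$ on $\partial\D$; and $g$ carries some open subarc $\sigma_1=g^{-1}(\gamma_1')\subseteq\partial\D$ homeomorphically onto $\gamma_1'$.

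Next I would compare $f$ with $g$. As $\Omega$ is a Jordan domain, $f$ also extends to a homeomorphism $\overline{\D}\to\overline{\Omega}$; set $\tau:=f^{-1}(\gamma_1')\cap\partial\D$, an open arc of $\partial\D$ containing $\zeta_0$, and $V:=\D\cap f^{-1}(W)$, so that near $\tau$ the set $V$ is a full one-sided neighbourhood of $\tau$ inside $\D$. Since $\Omega\cap W=\Omega_1\cap W$, the composition $\psi:=g^{-1}\circ f$ is a well-defined conformal map of $V$ into $\D$; it extends continuously to $V\cup\tau$ and satisfies $\psi(\tau)=g^{-1}(\gamma_1')=\sigma_1\subseteq\partial\D$, so $\psi$ is bounded and holomorphic on $V$, continuous up to the circular arc $\tau$, with $|\psi|\equiv 1$ on $\tau$. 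By the Schwarz reflection principle, $\psi$ continues holomorphically across $\tau$ via $z\mapsto 1/\overline{\psi(1/\bar z)}$; since $\psi$ is injective on $V$ and maps $\tau$ homeomorphically onto $\sigma_1$, this reflected map is injective on a full neighbourhood of $\tau$, hence conformal there, so $\psi'\neq 0$ on $\tau$. Finally $f=g\circ\psi$ on $V$, and combining $g\in C^{k+\alpha}(\overline{\D})$ and $g'\neq 0$ on $\partial\D$ with the real-analyticity of $\psi$ near $\tau$ and $\psi'\neq 0$ on $\tau$, the chain rule gives $f\in C^{k+\alpha}(V\cup\tau)$ and $f'=(g'\circ\psi)\,\psi'\neq 0$ on $\tau$. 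Taking $\sigma=\tau$ and $U=V$ and patching as above finishes the proof.

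The step I expect to be the main obstacle is this comparison: one must (i) arrange $\Omega_1$ to share a genuine open boundary arc with $\Omega$, which is exactly what forces the preliminary reduction to a subarc small enough to be a graph; (ii) verify that $g^{-1}\circ f$ really does extend continuously to $\tau$ and send $\tau$ into $\partial\D$, taking care that $f(\tau)$ lands on the part of $\partial\Omega$ where $\Omega$ and $\Omega_1$ agree; and (iii) upgrade the Schwarz-reflected extension from merely holomorphic to conformal across $\tau$, i.e.\ deduce $\psi'\neq 0$. By comparison, the interior regularity of $f$, the cut-off construction of $\Omega_1$, and the final covering argument are all routine.
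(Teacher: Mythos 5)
The paper does not prove this lemma at all: it is imported as a black box from Garnett--Marshall (Theorem~II.4.1), so your argument can only be compared with the literature rather than with anything in the text. On its own terms, your proof is the classical localization of the global Kellogg theorem by a comparison domain, and it is essentially correct: Carath\'eodory applied to both $f$ and $g$ gives the continuous extension of $\psi=g^{-1}\circ f$ to $\tau$ with $|\psi|=1$ there; Schwarz reflection across $\tau$ applies; and your injectivity argument for the reflected map is valid because the images of a half-disk, of its diameter arc, and of the reflected half-disk lie respectively inside, on, and outside $\partial\D$, so injectivity on each piece gives injectivity on their union and hence $\psi'\neq 0$ on $\tau$; the chain-rule bookkeeping for $f=g\circ\psi$ then yields $C^{k+\alpha}$ regularity up to $\tau$ and $f'=(g'\circ\psi)\psi'\neq 0$ there. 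The one step you should not wave off as ``routine'' is the construction of $\Omega_1$: the naive closing of the graph piece (e.g.\ taking the region between $y=\phi(x)$ and $y=\phi(x)+\eta(x)$ with $\eta$ vanishing at the ends) produces corners or cusps at the junction points, where the curve fails to be $C^1$, so the resulting domain is not covered by the global Kellogg theorem. The fix is standard but deserves a sentence: let the boundary of $\Omega_1$ coincide with the graph of $\phi$ on $[-b,b]$ with $\delta/2<b<\delta$, depart from it tangentially to infinite order by replacing $\phi$ with $\phi+\chi$ for a flat $C^\infty$ bump $\chi\geq 0$ supported in $\{|x|>b\}$, and close up with a $C^\infty$ arc strictly inside $\Omega$; then $\partial\Omega_1$ is globally $C^{k+\alpha}$, contains $\gamma_1'$, and $\Omega_1$ agrees with $\Omega$ in a neighborhood of the open arc $\gamma_1'$ (which may pinch near its endpoints, but that is all your comparison needs). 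Also note that $g'\neq 0$ on $\partial\D$ is used only for $f'\neq 0$, not for the smoothness of the composition, and that one should check (as you implicitly do) that $\partial\Omega\setminus\gamma'$ stays at positive distance from $p_0$, so that near $p_0$ the boundary really is just the graph and $\Omega$ lies on one side of it.
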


We also need the following result for the Poisson equation:
\begin{lemma}[{\cite[Theorem~5.1]{littman1963regular} \& \cite[Lemma 4.2]{gilbarg1977elliptic}}]\label{local2}
    Given $h \in L^1(\D)$, the Poisson equation\begin{align*}
        -\Delta u = h\quad &\text{inside}\quad \D,\\
        u = 0\quad &\text{on}\quad \partial \D.
    \end{align*}
    admits a unique weak solution with $u\in W_0^{1,p}(\D)$ for all $1\leq p<2$. Let $\zeta\in \partial\D$ be any point such that $h$ is Hölder-continuous on a neighborhood $U$ of $\zeta$. Then $\nabla u$ is continuous at $\zeta$ with \begin{equation*}
        |\nabla u(\zeta)|  = \int_{\D} P(\zeta,w)h(w)|dw|^2.
    \end{equation*}
\end{lemma}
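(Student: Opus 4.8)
We sketch the argument. The plan is to realize the solution as a Green potential and localize near $\zeta$. First I would set
\begin{equation*}
  u(z)=\int_\D G(z,w)\,h(w)\,|dw|^2,
\end{equation*}
where $G$ is the Green's function of $\D$, normalized so that $-\Delta_z G(\cdot,w)=\delta_w$ and $G(\cdot,w)=0$ on $\partial\D$. From the identity $|1-\bar w z|^2=|z-w|^2+(1-|z|^2)(1-|w|^2)$ one reads off the uniform bound $|\nabla_z G(z,w)|\le C|z-w|^{-1}$. Since $z\mapsto|z-w|^{-1}$ lies in $L^p(\D)$ with norm bounded independently of $w$ precisely when $p<2$, Minkowski's integral inequality gives
\begin{equation*}
  \|\nabla u\|_{L^p(\D)}\le\int_\D\|\nabla_z G(\cdot,w)\|_{L^p(\D)}\,|h(w)|\,|dw|^2\le C_p\,\|h\|_{L^1(\D)},\qquad 1\le p<2 .
\end{equation*}
Approximating $h$ in $L^1$ by smooth functions, whose potentials are smooth with vanishing trace, and passing to the limit in $W^{1,p}$ shows $u\in W_0^{1,p}(\D)$; uniqueness in this class holds because the difference of two solutions is weakly harmonic with zero trace and hence identically zero.

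The real work is the boundary regularity at $\zeta$, and here I would split $h=\chi h+(1-\chi)h$ with $\chi$ a smooth cutoff supported in $U$ and equal to $1$ on a smaller neighborhood $U'\ni\zeta$, writing $u=u_1+u_2$ for the two corresponding Green potentials, each with vanishing trace on all of $\partial\D$. On $U'\cap\D$ one has $-\Delta u_2=(1-\chi)h\equiv0$, so $u_2$ is harmonic near $\zeta$ with zero boundary values on $\partial\D\cap U'$; Schwarz reflection across that arc (via $z\mapsto 1/\bar z$) extends it harmonically past $\partial\D$, so $\nabla u_2$ is continuous at $\zeta$. For $u_1$, the datum $\chi h$ is H\"older continuous on $\overline\D$ and supported in $U$, so the boundary Schauder estimate for the Dirichlet problem gives $u_1\in C^{1,\alpha}$ up to $\partial\D$ near $\zeta$, and in particular $\nabla u_1$ is continuous at $\zeta$. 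Adding the pieces, $\nabla u$ is continuous at $\zeta$.

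Finally, since $u$ vanishes on $\partial\D\cap U'$ and is $C^1$ up to that arc, its tangential derivative at $\zeta$ is zero, so $\nabla u(\zeta)$ is normal and $|\nabla u(\zeta)|=\big|\tfrac{\partial u}{\partial\mathbf{n}}(\zeta)\big|$. Differentiating the Green potential in the normal direction — justifying the interchange with the integral by the same splitting, the $u_2$-part through the reflection and the $u_1$-part through the Schauder bound together with the identity $|\partial_{\mathbf{n}_\zeta}G(\zeta,w)|=P(\zeta,w)$ and dominated convergence — one obtains $\tfrac{\partial u}{\partial\mathbf{n}}(\zeta)=-\int_\D P(\zeta,w)h(w)\,|dw|^2$, hence $|\nabla u(\zeta)|=\int_\D P(\zeta,w)h(w)\,|dw|^2$, as claimed. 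The main obstacle is this middle step: matching interior and boundary Schauder theory for the H\"older piece with the reflection argument for the harmonic piece near $\zeta$; a secondary point, dealt with at the start, is that the data lie only in $L^1$, so one must bypass the Calder\'on--Zygmund $W^{2,p}$ estimates and argue directly from the kernel bound.
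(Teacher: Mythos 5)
Your proposal is essentially sound, but note that the paper does not prove this lemma at all: it is imported from the literature, with the $W^{1,p}_0(\D)$ existence and uniqueness for $L^1$ data taken from Littman--Stampacchia--Weinberger and the boundary gradient statement under local H\"older continuity taken from Gilbarg--Trudinger. Your argument is a self-contained substitute tailored to the disk: the Green-potential representation together with the kernel bound $|\nabla_z G(z,w)|\le C|z-w|^{-1}$ (valid since $|1-\bar w z|\ge |z-w|$) gives the $W^{1,p}$ bound for $p<2$ and membership in $W^{1,p}_0$ by $L^1$-approximation of the datum; the cutoff splitting, Schwarz reflection for the piece whose datum vanishes near $\zeta$, and boundary Schauder theory for the H\"older piece give continuity of $\nabla u$ at $\zeta$; and the identity $\nabla_z G(\zeta,w)=-\mathbf{n}\,P(\zeta,w)$ yields the normal-derivative formula. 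What each route buys: the citation is shorter and covers general elliptic operators; your version is elementary, stays entirely within the explicit disk kernel, and is closer in spirit to the rest of the paper's computations (indeed it parallels the argument the author uses later in Proposition~\ref{explicit} for the case of merely convex domains).

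Two compressed steps deserve more care. First, uniqueness: ``weakly harmonic with zero trace, hence zero'' is precisely the delicate point for $p<2$ — for operators with rough coefficients it is false (Serrin's example), and this is exactly what the cited Littman--Stampacchia--Weinberger theorem packages. For the Laplacian it does hold, but you should say how: Weyl's lemma makes the difference $v$ a genuine harmonic function, and then a duality argument (test against $-\Delta\phi=\psi$ with $\phi$ smooth up to $\partial\D$ and vanishing there, and kill the boundary term along a sequence of radii $r_k\to 1$ using $\nabla v\in L^1(\D)$ and $|\phi|\le C(1-r)$) forces $v\equiv 0$. Second, before reflecting $u_2$ you must check that it extends continuously by $0$ to the boundary arc near $\zeta$; this follows from the uniform bound $0\le G(z,w)\le C\log\bigl(1+(1-|z|^2)/d^2\bigr)$ for $w$ in the support of $(1-\chi)h$, which lies at distance at least $d>0$ from that arc. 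Finally, your computation gives the signed identity $\partial u/\partial\mathbf{n}(\zeta)=-\int_\D P(\zeta,w)h(w)\,|dw|^2$; the absolute-value form in the statement tacitly assumes $h\ge 0$ (as in the application, where $h=|f'|^2$), so either record that hypothesis or keep the signed version. These are fixable details, not gaps in the overall strategy.
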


To prove Theorem~\ref{theo-1}, we need to extract information from $C^{1+\alpha}$-corners of the domain. For that we have the following:

\begin{lemma}[{\cite[Theorem 3.9]{pommerenke2013boundary}}]\label{corners}
    If $\partial\Omega$ has two Dini-smooth subarcs (that is, curves parametrized by functions with Dini-continuous derivative), which meet at $\zeta\in\partial \Omega$ with an angle of $\pi\alpha$ for $0<\alpha\leq 1$, then the function $|f'(z)|/|z-\zeta|^{\alpha-1}$ is continuous and bounded away from $0$ around $\zeta$.
\end{lemma}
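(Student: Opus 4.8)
The plan is to straighten the corner with a power map and then invoke a local, Dini-smooth version of Kellogg's theorem. After a rigid motion we may assume $\zeta=1$ and $f(1)=0$; since a Dini-smooth arc has a one-sided tangent at its endpoint, we may also rotate so that the two subarcs $\gamma_1,\gamma_2$ of $\partial\Omega$ issuing from $0$ are tangent there to the rays $\arg w=\pi\alpha/2$ and $\arg w=-\pi\alpha/2$. Consequently, for small $\rho>0$ the region $f(\D\cap B_\rho(1))$ lies in the sector $S=\{\,w\neq 0:\ |\arg w|<\pi\alpha/2+\varepsilon\,\}$ (and contains a thinner sector), and since $\alpha\le 1$ this sector avoids the negative real axis, so a single-valued branch of $w\mapsto w^{1/\alpha}$ is available on it. If $\alpha=1$ the assertion is just that $f'$ is continuous and nonvanishing near $1$, which is the local Kellogg theorem for the Dini-smooth arc $\gamma_1\cup\gamma_2$; so assume $0<\alpha<1$.

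First I would put $g(z):=f(z)^{1/\alpha}$ (principal branch) on $\D\cap B_\rho(1)$; this is conformal, $g(1)=0$, and $g$ carries the boundary arc $\partial\D\cap B_\rho(1)$ onto the part of $\partial\Omega_1$ near $0$, where $\Omega_1:=g(\D\cap B_\rho(1))$. The structural heart of the proof is the claim that this boundary piece is a \emph{single Dini-smooth Jordan arc through $0$}. Indeed, $w\mapsto w^{1/\alpha}$ sends $\gamma_1,\gamma_2$ to arcs $\tilde\gamma_1,\tilde\gamma_2$ from $0$ that are tangent to $\arg w=\pi/2$ and $\arg w=-\pi/2$ respectively; these one-sided tangents are opposite, so $\tilde\gamma_1\cup\tilde\gamma_2$ has a genuine tangent at $0$, and --- tracking the unit tangent vector of $\gamma_j$, which by hypothesis has a Dini modulus of continuity, through the non-Lipschitz reparametrization that $w\mapsto w^{1/\alpha}$ induces near the vertex --- one verifies that the unit tangent of $\tilde\gamma_1\cup\tilde\gamma_2$ still has a Dini modulus of continuity across $0$. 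This bookkeeping at the vertex is the step I expect to be the main obstacle: away from $0$ the power map is smooth and harmless, but at the vertex one must check that composing a Dini-continuous tangent field with a $t\mapsto t^{1/\alpha}$-type change of parameter preserves the Dini property, and it is exactly here that $0<\alpha\le 1$ (a corner, not a cusp) is used.

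Granting the claim, I would apply the local Dini-smooth version of Kellogg's theorem (Warschawski) to $g$: since $\partial\Omega_1$ is Dini-smooth along the image of $\partial\D\cap B_\rho(1)$, both $g$ and $g'$ extend continuously to $\partial\D$ near $1$, with $g'(1)\neq 0$; in particular $g(z)/(z-1)=\int_0^1 g'(1+t(z-1))\,dt$ is continuous up to $z=1$ there with limit $g'(1)\neq 0$. It then remains to unwind the substitution. From $f(z)=g(z)^{\alpha}$ we get $f'(z)=\alpha\,g(z)^{\alpha-1}g'(z)$, whence
\[
\frac{|f'(z)|}{|z-1|^{\alpha-1}}=\alpha\,|g'(z)|\,\left|\frac{g(z)}{z-1}\right|^{\alpha-1}\ \longrightarrow\ \alpha\,|g'(1)|^{\alpha}\in(0,\infty)\qquad\text{as }z\to 1,
\]
and since $g'$ and $g(z)/(z-1)$ are continuous and nonvanishing on a full neighbourhood of $1$ in $\overline{\D}$, the left-hand side is continuous and bounded away from $0$ there. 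This is the assertion of the lemma; apart from citing the local Dini--Kellogg theorem, the only real analytic content is the preservation of Dini-smoothness under the straightening power map at the corner vertex.
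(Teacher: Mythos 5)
This lemma is not proved in the paper at all: it is imported verbatim from Pommerenke (the cited Theorem 3.9), so there is no internal argument of the paper to measure you against. Your proposal is, in essence, the standard proof of that cited result: straighten the corner by $w\mapsto w^{1/\alpha}$, verify that the straightened boundary is a single Dini-smooth arc through the image of the vertex, apply the Dini-smooth Kellogg--Warschawski theorem to the auxiliary map $g=f^{1/\alpha}$, and then unwind $f'=\alpha\,g^{\alpha-1}g'$ together with $g(z)/(z-1)\to g'(1)\neq 0$. The branch and injectivity of the power map on the ambient sector are fine for $0<\alpha<1$, and the final computation correctly yields the limit $\alpha|g'(1)|^{\alpha}\in(0,\infty)$, hence continuity and a bound away from zero near $\zeta$.

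The one substantive IOU is exactly the step you flag: that the straightening power map preserves Dini-smoothness at the vertex. This claim is true and your sketch points in the right direction; to close it, parametrize one arc by arclength, $\gamma(t)$ with $\gamma(0)=0$ and tangent angle of Dini modulus $\omega$, note that the arclength of $\tilde\gamma=\gamma^{1/\alpha}$ satisfies $s\asymp t^{1/\alpha}$, and that the tangent angle of $\tilde\gamma$ is $(\tfrac1\alpha-1)\arg\gamma(t)+\arg\gamma'(t)$; the second term has modulus $\omega(Cs^{\alpha})$, which is again Dini (substitute $u=s^{\alpha}$ in the Dini integral), and the first is handled via $\arg\gamma(t)-\pi\alpha/2=O\bigl(\tfrac1t\int_0^t\omega\bigr)$ together with the derivative bound $\bigl|\tfrac{d}{ds}\arg\gamma(t(s))\bigr|\lesssim\bigl(\omega(t)+\tfrac1t\int_0^t\omega\bigr)/s$, whose integral in $s$ is finite by the same substitution. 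Two minor points also deserve a sentence in a full write-up: for $\alpha=1$ you implicitly use that two Dini-smooth arcs meeting tangentially concatenate into a Dini-smooth arc (true; the moduli add), and the local Dini--Kellogg theorem is stated for Jordan domains, so to apply it to $g$ on $\mathbb{D}\cap B_\rho(1)$ you should first uniformize that half-disk-shaped region by a conformal map of $\mathbb{D}$, which is analytic across the relevant boundary arc since the remaining piece of the boundary is an analytic cross-cut whose image under $g$ stays away from $0$. None of this undermines the approach, which matches the proof in the cited reference; it only needs these verifications spelled out.
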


Note that Hölder-smooth open arcs are Dini-smooth as well.

\subsection{Proof of Theorem~\ref{theo-1}} \label{proof1}
Suppose $u$ be the torsion function of $\Omega$. Let $f:\D\to \Omega$ be the Riemann map of $\Omega$ such that $f(1)$ is an extremal point contained in some $C^{1+\alpha}$-smooth subarc $\gamma$ of $\partial\Omega$. We also set $v=u\circ f$, which satisfies $\eqref{torsion}$. By Lemma~\ref{local}, $f'$ is $\alpha$-Hölder-continuous near $1$. Then Lemma~\ref{local2} implies that \begin{align*}
    L_1(f) &= \frac{\int_\D |f'(w)|^2P(w)|dw|}{2\pi |f'(1)|}|\Omega|^{-1/2} \\&= \frac{\int_\D P(1,w)|f'(w)|^2|dw|}{|f'(1)|}|\Omega|^{-1/2}
    = \frac{|\nabla (u\circ f)(1)|}{|f'(1)|}|\Omega|^{-1/2}.
\end{align*}

Note that Lemma~\ref{local2} also tells us that $\nabla (u\circ f)(z) = f'(z)\cdot \nabla u(f(z))$ is continuous at $1$. Since $f'$ is continuous at $1$ and $f'(1)\neq 0$. It follows that $\nabla u(f(z))$ is continuous at $1$ and  \begin{equation*}
    \nabla u(f(1)) = f'(1)^{-1}\cdot \nabla(u\circ f)(1).
\end{equation*} Therefore, \begin{align*}
    L_1(f) = \frac{|\nabla(u\circ f)(1)|}{|f'(1)||\Omega|^{1/2}} = \frac{|\nabla u(f(1))|}{|\Omega|^{1/2}} = \frac{\left|\frac{\partial u}{\partial\mathbf{n}}(f(1))\right|}{|\Omega|^{1/2}}= \frac{\left\Vert\frac{\partial u}{\partial\mathbf{n}}\right\Vert_{L^{\infty}(\partial\Omega)}}{|\Omega|^{1/2}} = \mathbb{L}_1(\Omega).
\end{align*}

It follows from Theorem~\ref{equiv} that $f$ maximizes $L_1$. By Proposition~\ref{ineq}, we have $f' \in L^2_{\mathrm{loc}}(\partial \D\backslash\set{1})$. Note that $|z-\zeta|^{\alpha-1} \notin L^2_{\mathrm{loc}}(\partial\D \backslash\set{1})$ for any $\zeta\in \partial\D \backslash\set{1}$ and $\alpha < 1/2$. Therefore, once we have the $L^2$-integrability of $f'$ away from $1$, the theorem will follow from Lemma~\ref{corners}.

\subsection{Proof of Theorem~\ref{theo}}\label{proof2}
Given the Riemann map $f: \D \to \Omega$ of $\Omega$, we might assume that $f(1)$ is an extremal point contained in a $C^{2+\alpha}$ subarc. Then $f$ is $C^{2+\alpha}$ near $1$. Using the same argument as above, we have that $f \in \mathbf{F_0}$ and \begin{equation*}
     L_i(f) = \mathbb{L}_i(\Omega) = m_i.
\end{equation*}
Thus $f$ maximizes $L_i$. In this case, $t\to f(e^{it})$ is a $C^2$ parametrization of $\partial\Omega$ near $f(1)$. Therefore, the conformal curvature we defined in $\eqref{curvature}$ is the actual Euclidean curvature of $\partial\Omega$ near $f(1)$. Thus the result then follows from Proposition~\ref{theo3}.

\section{Numerics}
In this section, we will briefly describe the numerical construction of the two "nearly optimal" domains in Figure~\ref{fig:optimal}. The construction is due to Guido Sweers \cite{hoskins2021towards}. Consider the domain
$$    E_q = \set{(x,y)\in \R^2| x^2+\frac{y^2}{q^2} < 1}$$
and the map $h_q: E_q \rightarrow \mathbb{C}$ given by
  $$h_q = z+ \frac{3q^2+1}{4q^4+5q^2+1}z^2+\frac{1}{3(4q^2+1)}z^3.$$  
The domain $h_q(E_q)$ is convex for $q\in (1,2)$ and $h_q(-1)$ is the unique extremal point. Let $u_q$ be the torsion function of $h_q(E_q)$ and $v_q = u_q\circ h_q$. Then we have \begin{align*}
    -\Delta v_q = |h'_q|^2 \quad &\text{inside $E_q$},\\
    v_q = 0 \quad &\text{on $\partial E_q$}.
\end{align*} It follows that $v_q$ is a polynomial of degree $6$ with a factor $(x^2+(y/q)^2-1)$. We can solve $v_q$ explicitly. Then \begin{equation*}
    \frac{\partial u_q}{\partial\mathbf{n}}(h_q(-1)) = \frac{1}{|h
    '_q(-1)|} \frac{\partial v_q}{\partial \mathbf{n}}(-1).
\end{equation*}

The area and perimeter of $h_q(E_q)$ are given by \begin{align*}
    |h_q(E_q)|= \int_{E_q} |h'_q(w)|^2|dw|^2 \qquad \mbox{and} \qquad
    \mathcal{H}^1(h_q(\partial E_q))= \int_{\partial E_q} |h'_q(\zeta)||d\zeta|.
\end{align*}

Thus we can evaluate $L_1(h_q(E_q))$ and $L_2(h_q(E_q))$ numerically. One finds that $L_1(h_q(E_q))$ is maximized for $q\approx 1.3866$ and $L_2(h_q(E_q))$ is maximized for $q\approx 1.2278$. This gives us the two domains in Figure~\ref{fig:optimal}.

\bibliographystyle{plain}

\end{document}